\documentclass[11pt]{article}
\usepackage{amsgen,amsmath,amstext,amsbsy,amsopn,amssymb,mathabx,mathrsfs,amsthm,bm,bbm,romannum,enumitem,natbib,xr,multirow}

\usepackage{caption}
\usepackage[dvips]{graphicx}

\usepackage{subcaption}  % f
\usepackage[ruled,vlined,linesnumbered,resetcount]{algorithm2e}
\usepackage{hyperref}
\hypersetup{
	colorlinks=true,
	linkcolor=red,
	urlcolor=blue,
	citecolor=blue
}
\usepackage[depth=2]{bookmark}
\usepackage[margin=1in]{geometry}
\usepackage[utf8]{inputenc}

\usepackage{booktabs}
\usepackage{longtable}
\usepackage{array}
\usepackage{multirow}
\usepackage[table]{xcolor}
\usepackage{wrapfig}
\usepackage{float}
\usepackage{colortbl}
\usepackage{pdflscape}
\usepackage{tabu}
\usepackage{threeparttable}
\usepackage{threeparttablex}
\usepackage[normalem]{ulem}
\usepackage{makecell}
\usepackage{enumitem}

\DeclareMathOperator{\Var}{Var}

\newtheorem{thm}{Theorem}
\newtheorem{lem}[thm]{Lemma}
\newtheorem{prop}[thm]{Proposition}
\newtheorem{coro}[thm]{Corollary}

\theoremstyle{definition}

\newtheorem{asm}{Assumption}

\newtheorem{rmk}{Remark}

\theoremstyle{remark}

\newcommand{\bd}{\bm{d}}
\newcommand{\be}{\bm{e}}

\newcommand{\bt}{\bm{t}}

\newcommand{\bx}{\bm{x}}

\newcommand{\Ab}{\mathbf{A}}
\newcommand{\Bb}{\mathbf{B}}

\newcommand{\Eb}{\mathbf{E}}

\newcommand{\Gb}{\mathbf{G}}

\newcommand{\Ib}{\mathbf{I}}
\newcommand{\Jb}{\mathbf{J}}

\newcommand{\Xb}{\mathbf{X}}
\newcommand{\Yb}{\mathbf{Y}}
\newcommand{\Zb}{\mathbf{Z}}

\newcommand{\bI}{\bm{I}}

\newcommand{\bSigma}{\bm{\Sigma}}

\newcommand{\bbE}{\mathbb{E}}

\newcommand{\bbP}{\mathbb{P}}

\newcommand{\bbR}{\mathbb{R}}

\newcommand{\bbZ}{\mathbb{Z}}

\newcommand{\cE}{\mathcal{E}}

\newcommand{\cG}{\mathcal{G}}

\newcommand{\cN}{\mathcal{N}}

\newcommand{\bzero}{\mathbf{0}}	% zero vector
\newcommand{\bone}{\mathbf{1}}	% all-one vector
\newcommand{\convd}{\stackrel{d}{\longrightarrow}}

\usepackage{authblk}
\title{Consistent Identification of Top-$K$ Nodes in Noisy Networks}
\author{Hui Shen}
\author{Eric D. Kolaczyk}
\affil{McGill University}
\date{\today}

\begin{document}
	\pagenumbering{arabic}
	\maketitle
%	\newpage
%	\pdfbookmark[<level>]{<title>}{<dest>}
%	\pdfbookmark[section]{\contentsname}{toc}
%	\tableofcontents
\begin{abstract}

% outline 
% 1. importance of centrality measures in networks and real applications
% 2. network noise is common
% 3. Top-K consistency condition
% 4. investigation under a number of canonical network models
% 5. both positive and negative results
% 6. confidence set is provided based under noise setting 

Identifying the most influential nodes in a network, typically using centrality measures, is a central task in applied network analysis. However, real-world networks are often constructed from noisy or incomplete data, which can distort rankings and lead to errors in identifying the true top-$k$ nodes. In this paper, we study how network noise affects the recovery of the true top-$k$ node set based on degree centrality. Specifically, we consider a noisy network observation in which edges are randomly added or removed according to a probabilistic noise model, and analyze the resulting empirical top-$k$ set. 
We show that top-$k$ recovery under network noise is governed by the relationship between the degree gap and the noise magnitude, which separates recoverable and unrecoverable regimes. To quantify ranking stability, we derive upper and lower bounds on the expected discrepancy between the empirical and true top-$k$ sets in a general framework and for specific network models. We also extend the analysis to eigenvector centrality, showing that similar noise-gap tradeoffs arise in spectral rankings. Simulation studies support our theoretical findings and illustrate the practical impact of network noise across a range of settings.

\vspace{0.1in}
\noindent\textbf{Keywords:} degree centrality; noisy networks; set recovery; stability analysis; top-$k$ nodes
\end{abstract}

\section{Introduction}\label{sec:intro}
Networks have been a powerful tool for studying complex systems across biology, sociology, and engineering, enabling researchers to uncover interaction patterns and make informed decisions based on network connectivity. A central task in network analysis is the identification of important nodes, typically quantified through \emph{centrality} measures. Centrality plays a key role in applications such as influence maximization, infrastructure monitoring, and information diffusion, and is widely used in empirical studies. 

Because centrality is computed directly from the observed network, it is inherently sensitive to measurement noise, missing edges, and spurious connections. In many applications—ranging from protein–protein interaction networks to social and sensor-based systems—observed graphs are imperfect and represent noisy versions of the underlying interaction structure \citep{newman2018network}.

% P3: 1) empirical robustness work has a long history, 2) statistical/theoretical noisy-network work is more recent and focused on other tasks, 3) ML/GSP also study perturbation robustness, 4) node-level decision problems remain underdeveloped.
Empirical studies of noise and perturbations in network data have a long history across applied domains and the physics literature, including work on error and attack robustness of complex networks \citep{albert2000error, iyer2013attack}. In contrast, theoretical and statistical treatments of noisy network observation are more recent and have focused primarily on prediction and estimation tasks, such as link prediction, vertex classification, and estimation of global network summaries under observation error \citep{priebe2015statistical, zhao2017link, balach2017propagation, le2018estimating, arroyo2021maximum, chang2022estimation}. Related work in machine learning and graph signal processing has also emphasized robustness to graph perturbations, including stability of graph-based learning methods and signal-processing operators under structural errors \citep{xu2021robustness, wang2024uncertainty, dong2020graph, gama2020stability, miettinen2021modelling}. However, comparatively little attention has been paid to node-level decision problems, such as prioritizing, ranking, or selecting a small number of important vertices, where uncertainty directly alters downstream decisions \citep{shen2025minority}.

% P4: 1) Centrality robustness exists, 2) but it studies sensitivity/stability rather than top-k identifiability.
Beyond these task-specific studies, prior work has also examined the robustness of centrality measures under sampling and graph perturbations. These studies show that centrality values and rankings can change substantially under perturbation, with sensitivity varying across different centrality notions \citep{avella2020centrality, borgatti2006robustness, ufimtsev2016understanding, segarra2016stability, cavallaro2024sensitivity}. 
However, this literature focuses on perturbation sensitivity rather than statistical recoverability. In particular, it remains unclear when the empirical top-$k$ set can reliably recover the true top-$k$ set under noisy observations, and when such recovery is impossible.

In this paper, we study the empirical top-$k$ set computed from a noisy observed graph as an estimator of the true top-$k$ set in the underlying network. For degree centrality, we derive recovery conditions, characterize regimes where recovery fails, and obtain finite-sample bounds on the difference between the empirical and true top-$k$ sets. We also examine eigenvector centrality as a secondary global measure of node importance.

Degree centrality depends on local edge counts and is therefore the most tractable setting for developing general theory. Eigenvector centrality, by contrast, depends on the global spectral structure of the graph and is harder to analyze under perturbations. Accordingly, we focus primarily on degree centrality, while using eigenvector centrality as a contrasting global example.

Throughout, $\Ab$ denotes the adjacency matrix of the latent network and $\Yb$ its noisy observation.

\subsection*{A Motivating Example}

While centrality measures provide useful summaries of network structure, their induced rankings can be sensitive to observation noise. To illustrate the top-$k$ recovery problem studied in this paper, we consider a simple simulation based on three widely used network models: the Preferential Attachment (PA) model, the Erd\H{o}s--R\'enyi (ER) model, and the Small-World (SW) model. In each case, we generate a network with adjacency matrix $\Ab$ and then introduce edge noise by randomly adding and removing edges according to \eqref{eqn:noise_model} below.

In this example, we focus on degree centrality, which is the main object of our theoretical analysis. Our aim is to illustrate how noise alters the empirical degree ranking near the top, and how top-$k$ recoverability depends on the structural features of the underlying network.

% Figure 1: Separate Plots for ER, SW, and PA
\begin{figure}[htbp]
    \centering
    \includegraphics[width=16cm, height = 6cm]{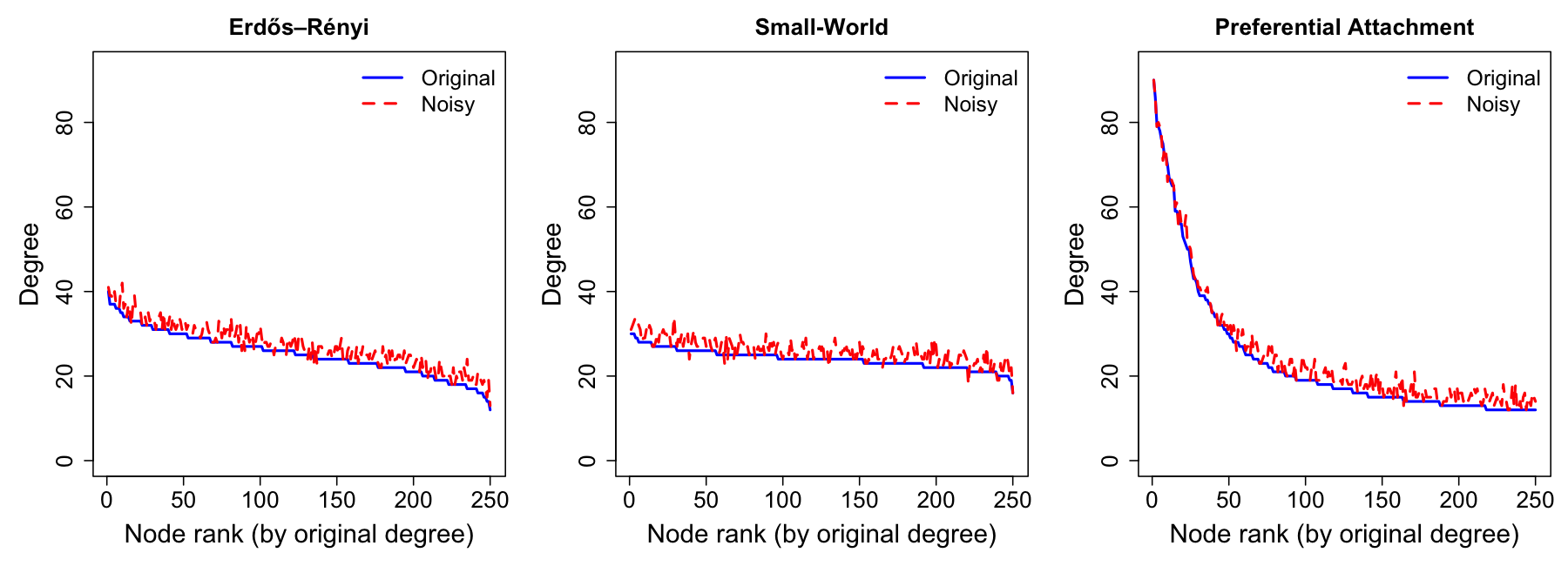}
    \caption{Ordered degree sequences for ER, SW, and PA networks before and after noise, where nodes are ranked by their original degrees ($n=250$, mean degree 25, $\alpha_n=0.01$ for edge addition, and $\beta_n=0.02$ for edge deletion).}
    \label{fig:toy_example}
\end{figure}

Figure~\ref{fig:toy_example} shows the ordered degree sequences before and after noise for the three models with $n=250$ nodes, mean degree $25$, and noise levels $\alpha_n = 0.01$ (edge addition) and $\beta_n = 0.02$ (edge deletion). The three panels exhibit markedly different behavior near the upper end of the degree sequence. In the PA model, the highest-degree nodes remain relatively well separated from the rest of the network after perturbation, suggesting that the top-$k$ set is comparatively stable. By contrast, the ER and SW models display much flatter degree profiles near the top ranks, so even modest perturbations can lead to visible changes in the ordering of the leading nodes. This suggests that the same level of observation noise can have very different consequences for top-$k$ recovery depending on the structure of the underlying graph.

This example highlights two points that motivate our theory. First, even modest noise can substantially distort centrality rankings near the top of the ordering. Second, recoverability depends not only on the amount of noise, but also on structural features of the underlying network, especially the separation among the leading centrality values. 

% Contributions
\paragraph{Contributions.}
We analyze the empirical top-$k$ set computed from a noisy graph as an estimator of the true top-$k$ set in the underlying network. Focusing on degree centrality, we derive recovery guarantees, impossibility results, and finite-sample bounds that show how top-$k$ recoverability depends on the interaction between observation noise and separation in the underlying degree sequence. 

\begin{itemize}
    \item \textbf{Recovery conditions for top-$k$ identification.} We derive conditions under which the empirical top-$k$ set under degree centrality reliably recovers the true top-$k$ set from a noisy graph. These conditions make explicit how successful recovery depends on the noise level and the separation between the relevant underlying degrees.
    \item \textbf{Impossibility of reliable recovery.} We identify regimes in which the true top-$k$ set cannot be consistently recovered from noisy observations. In particular, when the relevant degree separation is too small relative to the noise level, reliable recovery of the true top-$k$ set becomes impossible.
    \item \textbf{Finite-sample bounds on set recovery error.} We derive nonasymptotic upper and lower bounds on the symmetric set difference between the empirical and true top-$k$ sets. These bounds quantify how much ranking error remains in finite samples, including settings where exact top-$k$ recovery fails. 
    \item \textbf{Implications for standard network models.} We specialize the general theory to several commonly used random graph models and show how differences in degree structure lead to different top-$k$ recovery regimes. These results explain the contrasting behaviors observed in simulation and clarify how network topology shapes statistical recoverability. 
\end{itemize}

The remainder of the paper is organized as follows. Section~\ref{sec:theory} develops the theoretical results for degree centrality; Section~\ref{sec:EC} extends the analysis to eigenvector centrality; and Section~\ref{sec:num.res} presents numerical results. Proofs of all results are collected in the Appendix.

\section{Theoretical Results for Degree Centrality}\label{sec:theory}

We begin by defining the problem setting. Let $G=(V,E)$ be a simple undirected graph with vertex set
$V=\{1,\ldots,n\}$ and edge set $E \subseteq \{\{i,j\}: 1\le i<j\le n\}$.
Let $\Ab \in \{0,1\}^{n\times n}$ denote the symmetric adjacency matrix of $G$ with zero diagonal, so that
$A_{ij}=1$ if $\{i,j\}\in E$ and $A_{ij}=0$ otherwise.
In practice, the network is observed with noise, resulting in an observed graph $G^{\mathrm{obs}}$ with adjacency matrix
$\Yb \in \{0,1\}^{n\times n}$.
We adopt the edge-flip noise model of \citet{chang2022estimation}, under which
\begin{equation}\label{eqn:noise_model}
\mathbb{P}\!\left(Y_{ij}=1 \mid A_{ij}=0\right)=\alpha_n,
\qquad
\mathbb{P}\!\left(Y_{ij}=0 \mid A_{ij}=1\right)=\beta_n,
\end{equation}
for all $1\le i<j\le n$.

In this section we focus on degree centrality. For the underlying graph, the (true) degree of node $i$ is 
\[
d_i = \sum_{j \neq i} A_{ij},
\]
while the noisy degree, based on the noisy observation $\Yb$, is given by
\[
\tilde d_i = \sum_{j \neq i} Y_{ij}.
\]
We define the top-$k$ node set based on degree centrality as the $k$ nodes with the largest degrees. 
Without loss of generality, we relabel the vertices so that
\[
d_1 \ge d_2 \ge \cdots \ge d_n.
\]
Under this relabeling, the population top-$k$ set is given by 
\[
S_k := \{1,\ldots,k\},
\]
with ties at the cutoff broken uniformly at random. 
Similarly, the empirical top-$k$ set based on the noisy degrees is
\[
\widetilde S_k := \text{the set of $k$ nodes with the largest values of } \widetilde d_i. 
\]
We treat $\widetilde S_k$ as an estimator of the population-level target $S_k$ and study its recovery properties under noisy edge observations.

Exact top-$k$ recovery fails if there exist indices $i \le k$ and $j > k$ such that \(
\widetilde d_j > \widetilde d_i,
\) 
that is, if a node outside the true top-$k$ set is ranked above a node in the true top-$k$ set under noise. For any such pair $(i,j)$,
\[
\widetilde d_j - \widetilde d_i
=
(d_j - d_i)
+
\big[(\widetilde d_j - d_j) - (\widetilde d_i - d_i)\big].
\]
This suggests that recovery becomes difficult unless the deterministic degree gap $d_i-d_j$ dominates the stochastic fluctuation scale induced by edge noise.

This leads to two sources of instability: local reversals across the top-$k$ boundary and interference from lower-ranked nodes deeper in the degree sequence.
\begin{itemize}
\item[(i)] \textbf{Boundary separation.}
\[
\Delta_k^{\mathrm{bdry}} := d_k - d_{k+1}.
\]
This quantity measures separation at the top-$k$ boundary and controls rank reversals across the top-$k$ boundary near the cutoff.

\item[(ii)] \textbf{Bulk separation.}
For an index $i^*>k$ that separates near-boundary competitors from deeper bulk nodes, define
\[
\Delta_{k,i^*}^{\mathrm{bulk}} := d_k-d_{i^*}.
\]
\end{itemize}

The index $i^*$ partitions the lower-ranked nodes into two groups. Nodes with ranks $k<j<i^*$ are close competitors near the top-$k$ boundary and are the most likely to be ranked above nodes in $S_k$ under noise, while nodes with ranks $j\ge i^*$ lie in the bulk of the degree sequence and contribute only through aggregate interference.

The same distinction explains the logarithmic factors appearing below: if all lower-ranked nodes are treated uniformly as potential competitors, the fluctuation scale is typically of order $\sqrt{\log(n-k)}$, whereas separating local boundary reversals from bulk interference allows the boundary term to be controlled on the smaller $\sqrt{\log k}$ scale.

\subsection{Exact recovery}

We use the vanishing correction terms
$\epsilon_1(m)=\frac{\log\log m}{2\sqrt{2\log m}}$
and
$\epsilon_2(n)=\frac{C(n)}{\sqrt{\log n}}$
(with $C(n)\to\infty$ arbitrarily slowly),
which only affect lower-order terms.

Let
\begin{align*}
\mu_{n,i} &:= \bbE\left(\tilde{d}_i \right) = (n-1-d_{i}) \alpha_n  + d_{i} (1 - \beta_n), \\
\sigma_{n, i}^2 &:= \operatorname{Var} \left(\tilde{d}_i \right) = (n-1-d_{i}) \alpha_n (1 - \alpha_n) + d_{i} \beta_n (1 - \beta_n),   
\end{align*}
denote the mean and variance of the noisy degree $ \tilde{d}_i $, respectively.

\subsubsection{Exact Recovery}

Our first result establishes conditions under which the empirical estimator $\widetilde{S}_k$ coincides with the true top-$k$ set $S_k$. 
All probabilities are taken conditionally on the latent network $\Ab$, treating degree sequences as fixed but unknown.

Fix $\delta\in(0,1)$ and define $
L_{k}:=\log(k/\delta).$
For any $i^*>k$, define
\[
L_{\mathrm{bdry}}:=\log\!\big(k(i^*-k)/\delta\big)
\]
and the boundary variance envelope
\[
\bar{\sigma}_{\mathrm{bdry}}^2
:= \max_{i\le k,\; k<j<i^*}\big(\sigma_{n,i}^2+\sigma_{n,j}^2\big),
\qquad
\bar{\sigma}_{\mathrm{bdry}}:=\sqrt{\bar{\sigma}_{\mathrm{bdry}}^2}.
\]

\begin{thm}[Top-$k$ recovery under boundary and bulk separation]\label{thm:consistency_general}
Assume $\alpha_n + \beta_n \le 1 - c_0$ for some $c_0 \in (0,1)$ and $k = o(n)$. 
Suppose there exists an index $i^*>k$ with $\sigma_{n,i^*} \to \infty$ such that the following conditions hold:
\begin{enumerate}
\item[\textnormal{(i)}] \textbf{Bulk separation.}
\[
d_k - d_{i^*}
\;\ge\;
\frac{1}{1-\alpha_n-\beta_n}
\Big[
(\sqrt{2\log(n - i^* + 1)} - \epsilon_1(n-i^*+1) + \epsilon_2(n))\,\sigma_{n,i^*}
+ \sigma_{n,k}\sqrt{2L_{k}}
+ \tfrac{2}{3}L_{k}
\Big].
\]
\item[\textnormal{(ii)}] \textbf{Boundary separation.}
\[
d_k - d_{k+1}
\;\ge\;
\frac{1}{1-\alpha_n-\beta_n}
\Big[
\sqrt{2L_{\mathrm{bdry}}}\bar{\sigma}_{\mathrm{bdry}}
+ \tfrac{2}{3}L_{\mathrm{bdry}}
\Big].
\]
\end{enumerate}
Then, conditional on the latent network $\Ab$,
\[
\bbP(\widetilde S_k = S_k \mid \Ab) \ge 1 - 2\delta - o(1).
\]
\end{thm}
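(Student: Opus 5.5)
Exact recovery fails only if some $j>k$ has $\tilde d_j\ge\tilde d_i$ for some $i\le k$; ties are at worst broken randomly, so we may treat ties as failures. We split the competitors $j>k$ into near-boundary nodes $k<j<i^*$ and bulk nodes $j\ge i^*$, and bound the failure probability from each group separately by a union bound.

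Start with two facts. Conditional on $\Ab$, each $\tilde d_i$ is a sum of $n-1$ independent Bernoulli variables. Also $\mu_{n,i}=(n-1)\alpha_n+(1-\alpha_n-\beta_n)d_i$, so $\mu_{n,i}-\mu_{n,j}=(1-\alpha_n-\beta_n)(d_i-d_j)$. This identity explains the factor $1/(1-\alpha_n-\beta_n)$ in both conditions, and the assumption $\alpha_n+\beta_n\le 1-c_0$ keeps that factor bounded.

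\textbf{Boundary step.} Take a pair $i\le k<j<i^*$. The difference $\tilde d_j-\tilde d_i$ is a sum of independent centered bounded terms plus the mean $-(1-\alpha_n-\beta_n)(d_i-d_j)$. The edge $\{i,j\}$ enters both degrees. It contributes $Y_{ij}-Y_{ij}=0$, so it cancels and the remaining summands are independent, each bounded by $1$. Their variance is at most $\sigma_{n,i}^2+\sigma_{n,j}^2\le\bar\sigma_{\mathrm{bdry}}^2$.

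Bernstein's inequality then gives
\[
\bbP(\tilde d_j\ge\tilde d_i\mid\Ab)\le\exp\!\Big(-\tfrac{t^2}{2(\bar\sigma^2_{\mathrm{bdry}}+t/3)}\Big),
\qquad t=(1-\alpha_n-\beta_n)(d_i-d_j).
\]
Since $d_i-d_j\ge d_k-d_{k+1}$, condition (ii) gives $t\ge\sqrt{2L_{\mathrm{bdry}}}\bar\sigma_{\mathrm{bdry}}+\tfrac23L_{\mathrm{bdry}}$. The standard inversion of Bernstein shows this threshold makes each bound at most $e^{-L_{\mathrm{bdry}}}=\delta/(k(i^*-k))$. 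A union bound over the at most $k(i^*-k)$ pairs gives total probability at most $\delta$.

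\textbf{Bulk step.} Introduce a threshold $\tau=\mu_{n,k}-(1-\alpha_n-\beta_n)\cdot(\text{slack})$.

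First, every $i\le k$ should have $\tilde d_i>\tau$. Each such $i$ has $\mu_{n,i}\ge\mu_{n,k}$ and variance at most $\max(\sigma_{n,i}^2)$; we take this to be comparable to $\sigma_{n,k}$, or use $\sigma_{n,k}$ as stated. By one-sided Bernstein, $\tilde d_i\ge\mu_{n,i}-\sigma_{n,k}\sqrt{2L_k}-\tfrac23L_k$ fails with probability at most $\delta/k$. A union bound over the $k$ nodes gives total probability at most $\delta$.

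Second, every $j\ge i^*$ should have $\tilde d_j<\tau$. Each such $j$ is stochastically dominated by a variable with mean $\mu_{n,i^*}$, since $d_j\le d_{i^*}$ and flipping a true edge to a non-edge lowers the mean. We then bound $\max_{j\ge i^*}\tilde d_j$ over these $m=n-i^*+1$ nodes. Because $\sigma_{n,i^*}\to\infty$, Lindeberg/Berry–Esseen normal approximation, with the Gaussian extreme-value expansion $\sqrt{2\log m}-\epsilon_1(m)$ and the extra $\epsilon_2(n)$ slack, shows
\[
\max_{j\ge i^*}(\tilde d_j-\mu_{n,i^*})\le\big(\sqrt{2\log m}-\epsilon_1(m)+\epsilon_2(n)\big)\sigma_{n,i^*}
\]
with probability $1-o(1)$.

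Condition (i) says exactly that these two one-sided deviations sum to less than $\mu_{n,k}-\mu_{n,i^*}$. So on the intersection of the events, every bulk node is strictly below every top-$k$ node. Combining the boundary and bulk steps gives $\bbP(\widetilde S_k=S_k\mid\Ab)\ge1-2\delta-o(1)$.

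\textbf{Main obstacle.} The delicate step is the bulk maximum. The sharp constant $\sqrt{2\log m}-\epsilon_1(m)$ requires a uniform normal approximation in the moderate-deviation regime, not just Bernstein. Two issues need care. First, the $\tilde d_j$ are not independent, because they share edges; however, only an upper tail of the maximum is needed, and a union bound suffices there. Second, the variances vary with $d_j$. Here domination by the $i^*$ profile is used, since $\sigma_{n,j}^2$ is monotone in $d_j$ in the direction needed when $\alpha_n\le 1/2$; otherwise the discrepancy is absorbed into $\epsilon_2$.

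I would first try Cramér-type moderate deviations for Bernoulli sums. They give $\bbP(\tilde d_j-\mu_{n,j}>x\sigma_{n,j})=\bar\Phi(x)(1+o(1))$ uniformly for $x=O(\sqrt{\log n})=o(\sigma_{n,j}^{1/3})$. With $x=\sqrt{2\log m}-\epsilon_1(m)+\epsilon_2(n)$, the union bound $m\bar\Phi(x)\to0$ because $C(n)\to\infty$.
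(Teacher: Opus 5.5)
Your argument is essentially the paper's. You use the same split into near-boundary nodes $k<j<i^*$ and bulk nodes $j\ge i^*$. At the boundary you use pairwise Bernstein with a union bound over $k(i^*-k)$ pairs; your remark that the shared edge $Y_{ij}$ cancels in $\tilde d_j-\tilde d_i$ is what justifies independence there, which the paper leaves implicit. In the bulk you use the threshold $c^u_{n,i^*}=\mu_{n,i^*}+(\sqrt{2\log(n-i^*+1)}-\epsilon_1+\epsilon_2)\sigma_{n,i^*}$. The bulk maximum is handled by the first-moment half of Lemma~\ref{lem:quantile_bound}, which is exactly your union bound plus moderate deviations, and the top-$k$ side is handled by Bernstein. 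Where the paper uses association, $\bbP(\min_{i\le k}\tilde d_i>c\mid\Ab)\ge\bbP(\tilde d_k>c\mid\Ab)^k$, you use a union bound; both give $\delta$.

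The one step that does not go through as written is the lower tail for $i<k$. Bernstein for $\tilde d_i$ involves $\sigma_{n,i}$, and $\sigma_{n,i}^2-\sigma_{n,k}^2=(d_i-d_k)(\beta_n-\alpha_n)(1-\alpha_n-\beta_n)$. So $\sigma_{n,i}$ exceeds $\sigma_{n,k}$ whenever $\beta_n>\alpha_n$ and $d_i>d_k$, while condition (i) is stated with $\sigma_{n,k}$. Saying ``take this to be comparable to $\sigma_{n,k}$'' is not an argument. The paper closes this with the monotone coupling of Lemma~\ref{lem:stochastic_dominance}. Since $d_i\ge d_k$ and $1-\beta_n\ge\alpha_n$, $\tilde d_i$ stochastically dominates $\tilde d_k$, so $\bbP(\tilde d_i\le c\mid\Ab)\le\bbP(\tilde d_k\le c\mid\Ab)\le\delta/k$, and Bernstein is applied only at node $k$. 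Your direct route can also be repaired: the identity above, together with $d_k\ge 2L_k/(3(1-\alpha_n-\beta_n))$ (which follows from (i)), gives $\mu_{n,i}-\sqrt{2L_k}\,\sigma_{n,i}\ge\mu_{n,k}-\sqrt{2L_k}\,\sigma_{n,k}$ for $i\le k$. Dominance is cleaner, though.

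The same coupling removes your concern about heterogeneous variances in the bulk. It gives $\bbP(\tilde d_j\ge h\mid\Ab)\le\bbP(\tilde d_{i^*}\ge h\mid\Ab)$ for every $j\ge i^*$. The union bound is then at most $(n-i^*+1)\,\bbP(\tilde d_{i^*}\ge c^u_{n,i^*}\mid\Ab)$, and the moderate-deviation estimate is needed only for the single variable $\tilde d_{i^*}$; this is the bound $\lambda_h\le\lambda_h^{\star}$ in the proof of Lemma~\ref{lem:quantile_bound}. Your claim that $\sigma_{n,j}^2$ is monotone in $d_j$ ``when $\alpha_n\le1/2$'' is wrong: by the identity above, the direction is set by the sign of $\beta_n-\alpha_n$. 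Nothing can simply be absorbed into $\epsilon_2$ either, but with dominance neither point is needed.

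Finally, two steps rest on conditions the theorem does not state. The step $x=O(\sqrt{\log n})=o(\sigma_{n,i^*}^{1/3})$ requires $\sigma_{n,i^*}\gg(\log n)^{3/2}$. The conclusion $(n-i^*+1)(1-\Phi(x))\to0$ uses $\log(n-i^*+1)\asymp\log n$, since the sum is of order $\exp\{-\sqrt2\,C(n)\sqrt{\log(n-i^*+1)/\log n}\}$. Neither follows from $\sigma_{n,i^*}\to\infty$ and $k=o(n)$ alone. The paper relies on the same conditions implicitly: Lemma~\ref{lem:binom_approx} assumes $x_n=o(\sigma_n^{1/3})$, and Lemma~\ref{lem:quantile_bound} is stated with $m=n-k$, $k=o(n)$. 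So this is a shared caveat rather than a defect of your route, but it should be stated.
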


\begin{rmk}
In many heterogeneous network models, bulk nodes are substantially less competitive for inclusion in the top-$k$ set under moderate noise. As a result, the bulk separation condition can be weaker than the boundary requirement, allowing recovery even when the global degree profile is relatively flat. 
\end{rmk}

Theorem~\ref{thm:consistency_general} exploits both boundary and bulk separation and is therefore sharper than a one-gap analysis based solely on the boundary gap $d_k-d_{k+1}$.

We next state a simpler one-gap sufficient condition depending only on $\Delta_k^{\mathrm{bdry}} = d_k-d_{k+1}$, which is easier to interpret but more restrictive.

\begin{prop}[Boundary-gap sufficient condition for exact recovery]\label{prop:consistency}
Assume that $\alpha_n + \beta_n \leq 1 - c_0$ with $c_0 \in (0,1)$, $\sigma_{n,k+1}\to\infty$ as $n\to\infty$, and $k = o(n)$. Conditional on $\Ab$, the noisy estimator $\widetilde{S}_k$ recovers the true top-$k$ set $S_k$ with probability at least $1 - \delta - o(1)$, i.e.,
$\bbP(S_k = \widetilde{S}_k \mid \Ab) \geq 1 - \delta - o(1)$, provided that
\begin{equation}\label{eqn:cons_cond1}
d_k - d_{k+1}
\;\ge\;
\frac{1}{1-\alpha_n-\beta_n}
\left[
  (\sqrt{2\log(n-k)}-\epsilon_1(n-k)+\epsilon_2(n))\,\sigma_{n,k+1}
  +\sigma_{n,k}\sqrt{2L_k} + \tfrac{2}{3}L_k
\right],
\end{equation}
where $L_k = \log(k/\delta)$.
\end{prop}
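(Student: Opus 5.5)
Exact recovery $\widetilde S_k = S_k$ holds whenever $\min_{i\le k}\tilde d_i > \max_{j>k}\tilde d_j$. The plan is to find a deterministic threshold $t$ lying between these two random quantities with high probability. Write
\[
\tilde d_i-\mu_{n,i}=\sum_{j\ne i}(Y_{ij}-\bbE Y_{ij}),
\]
a sum of $n-1$ independent centred Bernoulli-type variables bounded by $1$, with variance $\sigma_{n,i}^2$. The mean is affine in the degree, $\mu_{n,i}=(n-1)\alpha_n+(1-\alpha_n-\beta_n)d_i$, so $\mu_{n,k}-\mu_{n,k+1}=(1-\alpha_n-\beta_n)(d_k-d_{k+1})$. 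This accounts for the prefactor $1/(1-\alpha_n-\beta_n)$ in \eqref{eqn:cons_cond1}. Since $\alpha_n+\beta_n\le 1-c_0$, each $\mu_{n,i}$ is nondecreasing in $d_i$.

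\textbf{Top side.} For each $i\le k$, Bernstein's inequality gives
\[
\bbP\big(\tilde d_i\le \mu_{n,i}-\sigma_{n,i}\sqrt{2L_k}-\tfrac23 L_k\big)\le e^{-L_k}=\delta/k,
\]
using the standard one-sided form $\sqrt{2\sigma^2x}+\tfrac{2}{3}x$ (or $x/3$, which is stronger). A union bound over the $k$ nodes yields, with probability at least $1-\delta$, $\tilde d_i>\mu_{n,i}-\sigma_{n,i}\sqrt{2L_k}-\tfrac23L_k$ for all $i\le k$. To replace the node-dependent quantities by those at $k$, I use that $\mu_{n,i}\ge\mu_{n,k}$. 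The variance $\sigma_{n,i}^2$ is affine in $d_i$ with slope $\beta_n(1-\beta_n)-\alpha_n(1-\alpha_n)$, which may be positive, so $\sigma_{n,i}$ could exceed $\sigma_{n,k}$. The increase of the mean, however, dominates: its slope is $1-\alpha_n-\beta_n\ge c_0$, while $\sigma_{n,i}\sqrt{2L_k}$ grows only like $\sqrt{d_i}$. I expect to need a short monotonicity argument showing that $\mu-\sigma\sqrt{2L_k}$ is nondecreasing in $d$, at least in the relevant range; failing that, I will absorb the discrepancy into the $o(1)$ slack. The conclusion is that $\min_{i\le k}\tilde d_i\ge\mu_{n,k}-\sigma_{n,k}\sqrt{2L_k}-\tfrac23L_k$ with probability at least $1-\delta$.

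\textbf{Bottom side (main obstacle).} Here the union is over $n-k$ nodes, and I want the sharp Gaussian-maximum constant $\sqrt{2\log m}-\epsilon_1(m)$, not a Bernstein union bound. Each $j>k$ has $\mu_{n,j}\le\mu_{n,k+1}$. By a coupling/stochastic dominance argument, adding true edges to node $j$ increases $\tilde d_j$ stochastically, so each $\tilde d_j$ is stochastically dominated by a variable distributed as $\tilde d_{k+1}$. Because $\sigma_{n,k+1}\to\infty$, the standardized sums obey a Lindeberg CLT, and moderate deviation (Cramér-type) estimates are uniform on the scale $\sqrt{2\log(n-k)}$. Two further facts are needed:
\begin{enumerate}
\item the $\tilde d_j$ are dependent only weakly, through the shared edges $Y_{jj'}$ (one edge per pair), so a union bound on the upper tail suffices;
\item the Gaussian tail bound $\bbP(Z>x)\le \phi(x)/x$ evaluated at
\[
x=\sqrt{2\log m}-\epsilon_1(m)+\epsilon_2(n),\qquad m=n-k,
\]
gives $m\,\bbP(Z>x)\to0$, since the $\epsilon_2$ term with $C(n)\to\infty$ kills the $\sqrt{\log m}$ prefactor. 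This is the reason for the correction terms.
\end{enumerate}
Thus $\max_{j>k}\tilde d_j<\mu_{n,k+1}+x\,\sigma_{n,k+1}$ with probability $1-o(1)$. The delicate point is justifying the Gaussian approximation in the moderate-deviation range $x\asymp\sqrt{\log n}$, which requires $\log n=o(\sigma_{n,k+1}^{2/3})$ or an equivalent condition. If that fails, I would fall back on an exact Bernoulli Chernoff bound and note that it matches to leading order. I will also use $k=o(n)$ so that $\log(n-k)\sim\log n$.

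\textbf{Combining.} Under \eqref{eqn:cons_cond1},
\[
\mu_{n,k}-\mu_{n,k+1}\ge x\,\sigma_{n,k+1}+\sigma_{n,k}\sqrt{2L_k}+\tfrac23L_k,
\]
so on the intersection of the two events $\min_{i\le k}\tilde d_i>\max_{j>k}\tilde d_j$. This gives $\widetilde S_k=S_k$ with probability at least $1-\delta-o(1)$.
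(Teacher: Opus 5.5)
Your argument follows the paper's proof. The proposition is the case $i^*=k+1$ of Theorem~\ref{thm:consistency_general}, where the boundary block $\{k<j<i^*\}$ is empty. That proof applies Bernstein's inequality on the top side. For the $n-k$ lower nodes it uses the upper envelope of Lemma~\ref{lem:quantile_bound}, which is exactly a first-moment (union) bound, reduced to $d_{k+1}$ by Lemma~\ref{lem:stochastic_dominance} and evaluated with the moderate-deviation tail of Lemma~\ref{lem:binom_approx}. A union bound needs no independence, so your weak-dependence remark is unnecessary.

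On the top side, do not leave the comparison of $\mu_{n,i}-\sigma_{n,i}\sqrt{2L_k}$ with $\mu_{n,k}-\sigma_{n,k}\sqrt{2L_k}$ to an unspecified ``$o(1)$ absorption''. The dominance you already use below gives $\bbP(\tilde d_i\le t)\le\bbP(\tilde d_k\le t)$ for $i\le k$. So Bernstein is needed only at node $k$, and your union bound then yields $\delta$. The paper gets the same bound through positive association, $\bbP(\min_{i\le k}\tilde d_i>t)\ge\bbP(\tilde d_k>t)^k$. Your monotonicity claim is in fact true for $d\ge d_k$. Condition \eqref{eqn:cons_cond1} together with $\sigma_{n,k}^2\ge d_k\beta_n(1-\beta_n)$ forces $\sigma_{n,k}\ge\sqrt{2L_k}\,\beta_n(1-\beta_n)/(1-\alpha_n-\beta_n)$. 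This keeps the $d$-derivative $(1-\alpha_n-\beta_n)\bigl(1-\sqrt{2L_k}(\beta_n-\alpha_n)/(2\sigma(d))\bigr)$ of $\mu(d)-\sqrt{2L_k}\,\sigma(d)$ above $(1-\alpha_n-\beta_n)/2$. Dominance is simply shorter.

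The genuine problem is the one you flag on the bottom side, and your fallback does not fix it. The Gaussian moderate-deviation tail at $x=\sqrt{2\log(n-k)}-\epsilon_1(n-k)+\epsilon_2(n)$ requires $x=o(\sigma_{n,k+1}^{1/3})$, i.e.\ $(\log n)^3=o(\sigma_{n,k+1}^2)$. The paper's proof needs exactly this too: Lemma~\ref{lem:quantile_bound} goes through Lemma~\ref{lem:binom_approx}, which assumes $x_n=o(\sigma_n^{1/3})$. Yet the stated hypotheses only ask for $\sigma_{n,k+1}\to\infty$.

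A Chernoff/Bernstein fallback gives at best $\max_{j>k}\tilde d_j\le\mu_{n,k+1}+\sigma_{n,k+1}\sqrt{2\log((n-k)/\eta)}+\tfrac23\log((n-k)/\eta)$. Condition \eqref{eqn:cons_cond1} is stated to second-order precision and covers neither the additive term of order $\log n$ nor the lost $-\epsilon_1$ correction. So ``matching to leading order'' proves only a weaker proposition.

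No sharper argument can help, because the correction is real for edge-addition-dominated, positively skewed noise. Take $k=1$, $\beta_n=0$, $\delta$ fixed, tail degrees $O(1)$, and $n\alpha_n=(\log n)^{1.8}$. Then the $\tilde d_j-d_j$, $j\ge2$, are essentially Poisson with mean $(\log n)^{1.8}$. Bennett-type large deviations plus a second-moment argument place $\max_{j\ge2}\tilde d_j$ about $\tfrac13\log n$ above the envelope $c^{u}_{n,2}$ of Lemma~\ref{lem:quantile_bound}. At (near) equality in \eqref{eqn:cons_cond1}, however, the top node has only $\mu_{n,1}-c^{u}_{n,2}=\sigma_{n,1}\sqrt{2\log(1/\delta)}+\tfrac23\log(1/\delta)=O((\log n)^{0.9})$ to spare. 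Hence $\bbP(\widetilde S_1=S_1\mid\Ab)\to0$ in this example.

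The proposition therefore needs the extra hypothesis $(\log n)^3=o(\sigma_{n,k+1}^2)$, or an additional $\tfrac23\log(n-k)$-type term in \eqref{eqn:cons_cond1}. Under that hypothesis, your argument with the top side closed by dominance is a complete proof.
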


\begin{rmk} \label{rmk:thm_consistency}
Here are a few remarks on Proposition~\ref{prop:consistency}. 
\begin{enumerate}
\item 
% interpretation of the condition
{\bf Signal-to-noise ratio}: 
The simplified condition in Proposition~\ref{prop:consistency} can be interpreted in terms of a signal-to-noise ratio. In \eqref{eqn:cons_cond1}, the boundary gap $\Delta_k^{\mathrm{bdry}}$ represents the signal strength, while the noise parameters $\alpha_n$ and $\beta_n$ determine the effective noise level. Specifically, $\sigma_{n,k+1}/(1-\alpha_n - \beta_n)$ can be viewed as the scale of random fluctuations in the noisy degrees. This suggests that, in the boundary-dominated regime, consistent recovery requires  
\[
\operatorname{SNR}
:= \frac{(1-\alpha_n-\beta_n)\Delta_k^{\mathrm{bdry}}}{\sigma_{n,k+1}}
\gtrsim \sqrt{2\log (n-k)}
\quad \text{for fixed } k,
\]
up to lower-order terms. Compared with Theorem~\ref{thm:consistency_general}, this proposition uses a single stronger boundary-gap condition, whereas the theorem exploits both boundary and bulk separation.
\item \textbf{Structural plausibility.}
The condition is most natural in heterogeneous networks (e.g., hub-dominated or heavy-tailed graphs), where the top-$k$ degrees are separated from the bulk.
\item
\textbf{Edge addition vs.\ deletion.}
The effects of edge addition ($\alpha_n$) and deletion ($\beta_n$) on top-$k$ stability follow directly from the variance
\[
\sigma_{n,k+1}^2
=
(n-1-d_{k+1})\alpha_n(1-\alpha_n)
+
d_{k+1}\beta_n(1-\beta_n),
\]
which depends on the underlying degree scale. In sparse networks, the contribution of edge deletion is limited, whereas edge addition can introduce larger fluctuations by creating spurious connections, making edge addition typically more destabilizing than deletion. This theoretical insight is consistent with empirical observations on network robustness reported in \cite{borgatti2006robustness}.
\item 
\textbf{Trade-off between probability and bound tightness}:
The residual probability term $o(1)$ in the success probability $1 - \delta - o(1)$ is of order $\exp(-C(n))$,  where $C(n)$ appears in the deviation parameter $\epsilon_2(n) = C(n) / \sqrt{\log n}$. Hence, increasing $C(n)$ and thus $\epsilon_2(n)$ strengthens the signal condition in \eqref{eqn:cons_cond1},  which reduces the residual probability $o(1)$ and yields a higher recovery probability. 
However, this improvement comes at the cost of a looser upper bound in the sufficient condition, reflecting a trade-off between statistical confidence and signal strength.
\item 
\textbf{Noise regime interpretation}:
The condition $\sigma_{n,k+1}\rightarrow\infty$ places us in a moderate-noise regime where rank reversals are possible. In the complementary sparse-noise regime $\sigma_{n,k+1} = O(1)$, rankings are intrinsically more stable and Poisson approximations could be used instead.
\end{enumerate}
\end{rmk}

\subsubsection{Infeasibility Regime}
To complement the sufficient conditions in Theorem~\ref{thm:consistency_general}, we establish lower bounds showing that top-$k$ recovery fails when the degree separation falls below the stochastic fluctuation scale induced by edge noise.

To describe such weak-separation regimes, we distinguish between two types: boundary separation near the top-$k$ cutoff and bulk separation between the top-$k$ set and lower-ranked nodes.

\paragraph{Critical separation thresholds.}
For $1 \le k < i^* \le n$, define
\[
\delta_{k,i^*}^{\mathrm{bulk}}
:=
\frac{1}{1-\alpha_n-\beta_n}
\Big(
\sqrt{2\log (n-i^*+1)}
- \epsilon_1(n-i^*+1)
- \epsilon_2(n)
\Big)\sigma_{n,i^*},
\]
and
\[
\delta_k^{\mathrm{bdry}}
:=
\frac{c_1}{1-\alpha_n-\beta_n}
2\sqrt{2\log k}\,\max\{ \sigma_{n,k}, \sigma_{n,k+1}\}, 
\]
for some fixed constant $c_1 \in (0,1)$.

For $\delta>0$, define
\[
\mathcal D_{k,i^*}^{\mathrm{bulk}}(\delta)
:=
\left\{
\bd = (d_1,\ldots,d_n):
d_1 \ge \cdots \ge d_n,\;
d_k - d_{i^*} \le \delta
\right\},
\]
and
\[
\mathcal D_k^{\mathrm{bdry}}(\delta)
:=
\left\{
\bd = (d_1,\ldots,d_n):
d_1 \ge \cdots \ge d_n,\;
d_k - d_{k+1} \le \delta
\right\}.
\]

\begin{thm}[Infeasibility of top-$k$ recovery]\label{thm:infeasibility}
Assume $\alpha_n+\beta_n \le 1 - c_0$ for some $c_0\in(0,1)$ and 
$\sigma_{n,i^*}\to\infty$ as $n\to\infty$.
Let $k$ and $i^*$ satisfy
\[
1 \le k < i^* \le n,
\qquad
\log^7(kn) = o\!\left(\min_{1\le i\le k} \sigma_{n,i}^2\right),
\]
where $k$ and $i^*$ may depend on $n$.

If
\[
\Delta_{k,i^*}^{\mathrm{bulk}} \le \delta_{k,i^*}^{\mathrm{bulk}}
\quad\text{or}\quad
\Delta_k^{\mathrm{bdry}} \le \delta_k^{\mathrm{bdry}},
\]
then there exists a constant $c_2>0$ such that
\[
\liminf_{n \to \infty}
\sup_{\Ab:\,\bd(\Ab)\in
\mathcal D^{\mathrm{bulk}}_{k,i^*}(\delta_{k,i^*}^{\mathrm{bulk}})
\cup
\mathcal D^{\mathrm{bdry}}_{k}(\delta_k^{\mathrm{bdry}})
}
\bbP\!\left( S_k \neq \widetilde{S}_k \mid \Ab \right)
\ge c_2.
\]
\end{thm}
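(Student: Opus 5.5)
Since the statement takes a supremum over latent networks whose degree sequence lies in the union of the two weak-separation classes, it suffices to exhibit, for each $n$, one extremal configuration in $\mathcal D^{\mathrm{bulk}}_{k,i^*}(\delta^{\mathrm{bulk}}_{k,i^*})$ or one in $\mathcal D^{\mathrm{bdry}}_k(\delta^{\mathrm{bdry}}_k)$ on which $\bbP(S_k\neq\widetilde S_k\mid\Ab)$ stays bounded below. Conditional on $\Ab$, each noisy degree $\tilde d_i$ is a sum of independent Bernoulli variables with mean $\mu_{n,i}$ and variance $\sigma_{n,i}^2$. Moreover, $\mu_{n,i}-\mu_{n,j}=(1-\alpha_n-\beta_n)(d_i-d_j)$, which explains the factor $1/(1-\alpha_n-\beta_n)$ in both thresholds. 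Recovery fails as soon as $\max_{j>k}\tilde d_j>\min_{i\le k}\tilde d_i$, so I will lower bound the probability of that event in each of the two regimes.

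\emph{Bulk regime.} I take a configuration with $d_k-d_{i^*}=\delta^{\mathrm{bulk}}_{k,i^*}$ and $d_{i^*}=\cdots=d_n$, so the $m=n-i^*+1$ bulk nodes have essentially independent noisy degrees with common mean and variance $\sigma_{n,i^*}^2$. The only dependence comes from shared edges $Y_{jj'}$; each pair shares one Bernoulli, so I will remove these by conditioning or a small coupling. I then standardise and apply a Gaussian approximation for the maximum of $m$ nearly i.i.d.\ sums (Berry--Esseen plus the Gumbel limit, or the Chernozhukov--Chetverikov--Kato high-dimensional CLT, which is where a $\log^7$ condition typically enters). This should show that
\[
\max_{j\ge i^*}\tilde d_j\ \ge\ \mu_{n,i^*}+\big(\sqrt{2\log m}-\epsilon_1(m)-\epsilon_2(n)\big)\sigma_{n,i^*}
\]
with probability tending to $1$, because $\epsilon_2(n)\sigma_{n,i^*}$ sits $C(n)\to\infty$ Gumbel scale units below the centring. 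Separately, $\tilde d_k\le\mu_{n,k}$ with probability approaching $1/2$ by the CLT, since $\sigma_{n,k}\to\infty$. A Harris/FKG-type argument, or a direct conditioning on the few shared edges, handles the near-independence of these two events. The bulk threshold then gives $\max_{j\ge i^*}\tilde d_j\ge\mu_{n,k}\ge\tilde d_k$ with probability at least roughly $1/2-o(1)$, so we may take $c_2$ close to $1/2$.

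\emph{Boundary regime.} I take $d_1=\cdots=d_k=d_k$ and place at least $k$ nodes just below the cutoff, with $d_{k+1}=\cdots=d_{2k}=d_k-\delta^{\mathrm{bdry}}_k$ (allowed since $k=o(n)$ implicitly, or at least $i^*>k$ permits such a block). The minimum of the $k$ top noisy degrees falls roughly $\sqrt{2\log k}\,\sigma_{n,k}$ below its mean, and the maximum of the competitors rises roughly $\sqrt{2\log k}\,\sigma_{n,k+1}$ above its mean. Together these cover a gap of $2\sqrt{2\log k}\max\{\sigma_{n,k},\sigma_{n,k+1}\}$ up to $1+o(1)$ factors. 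Since $c_1<1$, the two fluctuations exceed the mean gap $(1-\alpha_n-\beta_n)\delta^{\mathrm{bdry}}_k$ with probability tending to $1$, again via the Gaussian approximation of extremes. The hypothesis $\log^7(kn)=o(\min_{i\le k}\sigma_{n,i}^2)$ is exactly what the high-dimensional CLT needs for the $k$ top nodes. For bounded $k$, $\sqrt{\log k}$ is $O(1)$, and the plain CLT applied to the single pair $(k,k+1)$ gives a constant probability of reversal.

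The main obstacle is the rigorous lower bound on the maximum of weakly dependent, non-identical Bernoulli sums at the precise $\sqrt{2\log m}-\epsilon_1$ scale. I would first write each $\tilde d_j$ as a sum over edges and apply the Chernozhukov--Chetverikov--Kato Gaussian approximation for maxima of sums of independent vectors, with one coordinate per node and one summand per edge pair, so that the dependence is built into the covariance. I would then bound the Gaussian maximum from below using Slepian or Sudakov--Fernique comparison with i.i.d.\ Gaussians, since the cross-correlations are only $O(1/\sigma^2)$. The sup over configurations lets us pick the extremal, homogeneous configurations above, which avoids dealing with general heterogeneity.
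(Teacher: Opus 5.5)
Your overall architecture matches the paper's: you use the supremum to pass to least-favourable homogeneous configurations, Gaussian-approximate each side of the cutoff, and combine. In the boundary regime you essentially follow the paper, which uses the midpoint threshold $t=(\mu_{n,k}+\mu_{n,k+1})/2$ and Lemma~\ref{lem:min_CLT}, with a split into fixed versus growing block sizes. Your choice of a competing block of size $k$, with both extremes fluctuating on the $\sqrt{2\log k}$ scale, fits the stated $\delta_k^{\mathrm{bdry}}$ more directly than the paper's proof. That proof works with a different $\min\{\cdot\}$-type gap bound involving $\log(i^*-k)$ and an auxiliary $\eta<1$.

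The genuine difference is in the bulk regime. The paper gets $\bbP(\max_{i\ge i^*}\tilde d_i\ge c^{l}_{n,i^*}\mid\Ab)\to 1$ from the lower envelope of Lemma~\ref{lem:quantile_bound}. That is a Bollob\'as-type second-moment argument on the exceedance count $X_h$, with edge-sharing handled by an explicit variance computation and tails handled by Lemma~\ref{lem:binom_approx}. It then treats $\min_{i\le k}\tilde d_i$ with Lemma~\ref{lem:min_CLT}(3). You instead apply the Chernozhukov--Chetverikov--Kato approximation to the bulk coordinates, absorbing the dependence into the covariance. On the top side you use only the single node $k$, which gives the same constant $\approx 1/2$ with less machinery. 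Your route reuses one tool throughout. The paper's route needs only the moderate-deviation window $\sqrt{\log n}=o(\sigma_{n,i^*}^{1/3})$, rather than $\log^7 n=o(\sigma_{n,i^*}^2)$.

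Three points need fixing.

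\textbf{Combining the two events.} Harris/FKG goes the wrong way. The event $\{\max_{j\ge i^*}\tilde d_j\ge c\}$ is increasing and $\{\tilde d_k<\mu_{n,k}\}$ is decreasing in the independent $Y_{ij}$, so Harris gives $\bbP(A\cap B)\le\bbP(A)\bbP(B)$. Conditioning on ``the few shared edges'' is not viable either, since $n-i^*+1$ of the $n-1$ summands of $\tilde d_k$ are shared with the bulk. Neither is needed: because $\bbP(A)\to 1$, the bound $\bbP(A\cap B)\ge \bbP(A)+\bbP(B)-1$ suffices, exactly as in the paper. Use the strict event $\tilde d_k<\mu_{n,k}$ to avoid noisy ties.

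\textbf{The Gaussian comparison.} Slepian or Sudakov--Fernique against i.i.d.\ Gaussians also points the wrong way. Since $\mathrm{Cov}(\tilde d_j,\tilde d_{j'}\mid\Ab)=\Var(Y_{jj'}\mid\Ab)\ge 0$, the i.i.d.\ comparison bounds $\bbP(\max_j G_j\ge t)$ from \emph{above}. Sudakov--Fernique only compares expectations, which is too crude at the $1/\sqrt{\log m}$ scale you need. Compare instead with an equicorrelated vector with correlation $\bar\rho=\max_{j\ne j'}\Sigma_{jj'}=O(\sigma_{n,i^*}^{-2})$, whose maximum is $\sqrt{\bar\rho}\,Z_0+\sqrt{1-\bar\rho}\,\max_j Z_j$. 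Alternatively, use the rectangle comparison lemma, as in Lemma~\ref{lem:min_CLT}.

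\textbf{The $\log^7$ hypothesis.} It is assumed only for $\sigma_{n,1},\dots,\sigma_{n,k}$, yet you apply the high-dimensional CLT to bulk and competitor coordinates. You can repair this: for $j\in\{k+1,i^*\}$ in the relevant class,
\[
|\sigma_{n,k}^2-\sigma_{n,j}^2|=(d_k-d_j)\,|\beta_n(1-\beta_n)-\alpha_n(1-\alpha_n)|\le \tfrac14 (d_k-d_j)=O\big(\sqrt{\log n}\,\max\{\sigma_{n,k},\sigma_{n,j}\}\big),
\]
which forces $\sigma_{n,j}\sim\sigma_{n,k}$. The same computation justifies your ``$1+o(1)$'' claim that the two boundary fluctuations together cover $2\sqrt{2\log k}\max\{\sigma_{n,k},\sigma_{n,k+1}\}$. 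Two smaller caveats: the ``$C(n)$ Gumbel units'' step tacitly needs $\log(n-i^*+1)\asymp\log n$, as the paper's does, and the block $\{k+1,\dots,2k\}$ needs $2k\le n$, which the theorem does not assume.

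As an aside, the supremum runs over the union of classes, and $\mathcal D_k^{\mathrm{bdry}}(\delta_k^{\mathrm{bdry}})$ contains sequences with $d_k=d_{k+1}$. So the statement as literally written already follows from a one-dimensional CLT for $\tilde d_k-\tilde d_{k+1}$. Your at-threshold constructions, like the paper's, prove something stronger than what is literally required.
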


\begin{coro}[Boundary-dominated infeasibility condition]\label{coro:infeasibility}
Assume $\alpha_n+\beta_n\le 1-c_0$ with $c_0\in(0,1)$ and $\sigma_{n,k+1}\to\infty$.
Let $k=o(n)$ and suppose
\[
\log^7(kn)=o\!\left(\min_{1\le i\le k}\sigma_{n,i}^2\right).
\]
Define
\[
\bar\delta_k^{\mathrm{bdry}}
:=
\frac{1}{1-\alpha_n-\beta_n}
\Big(
\sqrt{2\log (n-k)}
-\epsilon_1(n-k)-\epsilon_2(n)
\Big)\sigma_{n,k+1}.
\]
Then there exists a constant $c>0$ such that
\[
\liminf_{n\to\infty}
\sup_{\Ab:\,\bd(\Ab)\in \mathcal D_k^{\mathrm{bdry}}(\bar\delta_k^{\mathrm{bdry}})}
\bbP\!\left(S_k\neq \widetilde S_k \mid \Ab\right)
\ge c.
\]
\end{coro}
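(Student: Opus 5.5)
The plan is to obtain Corollary~\ref{coro:infeasibility} from Theorem~\ref{thm:infeasibility} by taking the bulk index to be $i^*=k+1$. With this choice, $n-i^*+1=n-k$ and $\sigma_{n,i^*}=\sigma_{n,k+1}$, so the bulk threshold reduces exactly to the corollary's threshold: $\delta_{k,k+1}^{\mathrm{bulk}}=\bar\delta_k^{\mathrm{bdry}}$. Likewise $\Delta_{k,k+1}^{\mathrm{bulk}}=d_k-d_{k+1}=\Delta_k^{\mathrm{bdry}}$. It follows that
\[
\mathcal D^{\mathrm{bulk}}_{k,k+1}\big(\delta_{k,k+1}^{\mathrm{bulk}}\big)=\mathcal D_k^{\mathrm{bdry}}\big(\bar\delta_k^{\mathrm{bdry}}\big),
\]
and every degree sequence in this class satisfies the first alternative hypothesis $\Delta_{k,i^*}^{\mathrm{bulk}}\le\delta_{k,i^*}^{\mathrm{bulk}}$ of Theorem~\ref{thm:infeasibility}.

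Next I would verify the remaining hypotheses of Theorem~\ref{thm:infeasibility} for $i^*=k+1$:
\begin{itemize}
\item $\alpha_n+\beta_n\le 1-c_0$ is assumed directly.
\item $\sigma_{n,i^*}=\sigma_{n,k+1}\to\infty$ is assumed directly.
\item $1\le k<k+1\le n$ holds because $k=o(n)$ forces $k+1\le n$ for large $n$.
\item The condition $\log^7(kn)=o(\min_{i\le k}\sigma_{n,i}^2)$ is copied verbatim.
\item The assumption $k=o(n)$ also ensures $n-k\to\infty$, so $\epsilon_1(n-k)$ is well defined and vanishing, and $\sqrt{2\log(n-k)}$ is of the same order as $\sqrt{2\log n}$.
\end{itemize}

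The only subtle point is the supremum. Theorem~\ref{thm:infeasibility} bounds the supremum over the union $\mathcal D^{\mathrm{bulk}}\cup\mathcal D^{\mathrm{bdry}}(\delta_k^{\mathrm{bdry}})$, and a supremum over a union could in principle be attained on the second set. I would therefore argue as the theorem's proof presumably does: its lower bound is established separately under each alternative hypothesis, and here the bulk hypothesis holds for every $\Ab$ in our class. So I would invoke the bulk branch of that argument directly for the class $\mathcal D^{\mathrm{bulk}}_{k,k+1}$.

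If that branch cannot be isolated, I would re-run it directly. The idea is a maximum of nearly independent approximately Gaussian fluctuations:
\begin{itemize}
\item Construct an $\Ab$ in the class with $d_{k+1}=\dots=d_n$ nearly equal, say equal up to the gap $\bar\delta_k^{\mathrm{bdry}}$ relative to $d_k$.
\item Use a Gaussian approximation for $\max_{j>k}\tilde d_j$. The $\log^7$ condition is the usual requirement for high-dimensional CLT results of Chernozhukov--Chetverikov--Kato type, applied to the nearly independent noisy degrees, which share only one edge pairwise.
\item Use the classical extreme-value expansion: $\max_{j>k}\tilde d_j$ exceeds $\mu_{n,k+1}+(\sqrt{2\log(n-k)}-\epsilon_1(n-k)-\epsilon_2(n))\sigma_{n,k+1}$ with probability bounded away from $0$. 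In fact it tends to $1$, since $-\epsilon_2$ shifts below the Gumbel centering.
\item Meanwhile $\tilde d_k\le\mu_{n,k}+O(\sigma_{n,k})$ with constant probability.
\end{itemize}
Since $\mu_{n,k}-\mu_{n,k+1}=(1-\alpha_n-\beta_n)(d_k-d_{k+1})\le(\cdots)\sigma_{n,k+1}$, the threshold exactly absorbs the mean gap. The excess $\epsilon_2(n)\sigma_{n,k+1}$ must then dominate the $O(\sigma_{n,k})$ fluctuation of $\tilde d_k$.

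I expect this comparison to be the main obstacle, because it needs $\sigma_{n,k}\lesssim\sigma_{n,k+1}$ and $C(n)\to\infty$. The first requires $d_k$ and $d_{k+1}$ to be close, which holds in the extremal configuration because the gap is $o(n)$. If the comparison proves delicate, it suffices to use independence of $\tilde d_k$ from most edges to get a constant-probability bound rather than probability tending to $1$. Either route yields $\bbP(\tilde d_j>\tilde d_k\text{ for some }j>k)\ge c$, hence $\bbP(S_k\ne\widetilde S_k\mid\Ab)\ge c$, and taking the supremum and then the $\liminf$ completes the argument.
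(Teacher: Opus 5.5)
Your reduction is correct and is how the corollary follows in the paper. With $i^*=k+1$ one has $\delta^{\mathrm{bulk}}_{k,k+1}=\bar\delta_k^{\mathrm{bdry}}$ and $\mathcal D^{\mathrm{bulk}}_{k,k+1}(\bar\delta_k^{\mathrm{bdry}})=\mathcal D_k^{\mathrm{bdry}}(\bar\delta_k^{\mathrm{bdry}})$, and the bulk-separation branch of the proof of Theorem~\ref{thm:infeasibility} builds its least-favorable configuration ($d_1=\cdots=d_k$, $d_{k+1}=\cdots=d_n$) inside this class with error probability at least $\tfrac12-o(1)$, so isolating that branch, as you propose, resolves the issue with the union.

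The obstacle you anticipate in your fallback does not arise, and the step as you phrase it would fail, because $\epsilon_2(n)\sigma_{n,k+1}=o(\sigma_{n,k+1})$ cannot dominate an order-$\sigma_{n,k}$ fluctuation. It is not needed: the gap condition already gives $\mu_{n,k}\le h:=\mu_{n,k+1}+\bigl(\sqrt{2\log(n-k)}-\epsilon_1(n-k)-\epsilon_2(n)\bigr)\sigma_{n,k+1}$. Hence $\bbP(\tilde d_k<h\mid\Ab)\ge\tfrac12-o(1)$ by Berry--Esseen (Lemma~\ref{lem:refined_berry_esseen}, using $\sigma_{n,k}\to\infty$). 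Combining this with $\bbP(\max_{j>k}\tilde d_j\ge h\mid\Ab)=1-o(1)$ from the lower envelope of Lemma~\ref{lem:quantile_bound}, via $\bbP(A\cap B)\ge\bbP(A)+\bbP(B)-1$, finishes the argument. This needs no comparison of $\sigma_{n,k}$ with $\sigma_{n,k+1}$, and no CCK approximation over the $n-k$ tail nodes; the $\log^7(kn)$ hypothesis concerns the top-$k$ block, not the tail maximum.
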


\begin{rmk}[Near-optimality]
The infeasibility results are near-optimal in both the bulk and boundary regimes when compared with the corresponding sufficient conditions. 
Specifically, the bulk threshold $\delta_{k,i^*}^{\mathrm{bulk}}$ matches the bulk separation condition in Theorem~\ref{thm:consistency_general} at the leading logarithmic scale, while the boundary threshold in Corollary~\ref{coro:infeasibility} matches the one-gap sufficient condition in Proposition~\ref{prop:consistency}, up to lower-order terms. 
Thus the recovery boundary is sharp to first order in both regimes under the present framework.
\end{rmk}

These results establish a sharp phase transition for top-$k$ recovery under noisy degree observations. Recovery occurs with high probability when the boundary and bulk separation exceed the stochastic fluctuation scale induced by edge noise, and fails with probability bounded away from zero when either separation falls below this scale. 

Because the sufficient and infeasible conditions coincide at the leading logarithmic order in both regimes, the recovery boundary is sharp to first order.

\subsection{Consequences for canonical network models}

\subsubsection{Implications for the Preferential Attachment Model}

The preferential attachment (PA) model is a classical random graph model
with pronounced hub structure. A small number of vertices accumulate degrees growing polynomially with the network size, resulting in large separation among the top-ranked nodes. These structural properties make PA networks fundamentally different from homogeneous random graph models and play a crucial role in the stability of degree-based rankings under noise.

In the simplest form of the linear PA model, given a graph $G_t$ at time $t$, a new node $v_{t+1}$ is introduced and connects to an existing node $v \in V_t$ with probability
\begin{equation}\label{eqn:PA_prob}
    \bbP(v_{t+1} \leadsto v \mid G_t)
    = \frac{\deg(v) + b}{\sum_{u \in V_t} (\deg(u) + b)},
\end{equation}
where $\deg(v)$ denotes the degree of node $v$, and $b>-1$ is an offset parameter in the linear attachment function $f(d)=d+b$.  This mechanism induces a power-law degree distribution.

As illustrated in Section~\ref{sec:intro}, the leading hubs in PA networks exhibit remarkable stability under edge perturbations. The following theorem formalizes this robustness for top-$k$ recovery under the independent edge-flip model defined in \eqref{eqn:noise_model}.

\begin{thm}[Hub stability in PA model]\label{thm:PA_joint}
Let $\Ab$ be generated by the linear PA model defined in \eqref{eqn:PA_prob} with parameter $b>-1$. Fix $k\ge 1$. Assume
\[
\alpha_n=o\!\Big(n^{-\frac{b}{2+b}}\tfrac{1}{\log n}\Big)
\qquad\text{and}\qquad
\alpha_n+\beta_n\le 1-c_0
\]
for some constant $c_0\in(0,1)$. Then
\[
\bbP(\widetilde S_k=S_k)=1-o(1).
\]
\end{thm}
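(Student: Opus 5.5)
The plan is to apply Theorem~\ref{thm:consistency_general} (or Proposition~\ref{prop:consistency}) on a high-probability event for the PA degree sequence, then integrate over $\Ab$. Write $\gamma:=1/(2+b)$. For linear PA with offset $b$, classical results (M\'ori's martingale argument; see also van der Hofstad) give that for each fixed $i$, the $i$-th largest degree satisfies $d_{(i)} n^{-\gamma}\to \xi_i$ almost surely. Here $\xi_1>\xi_2>\cdots>\xi_{k+1}>0$ almost surely, because the limits have continuous joint law. Hence for any $\eta>0$ there are constants $0<c_\eta<C_\eta$ and an event $\mathcal{G}_n$ with $\bbP(\mathcal{G}_n)\ge 1-\eta$ for large $n$. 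On $\mathcal{G}_n$ we have $d_k-d_{k+1}\ge c_\eta n^{\gamma}$ and $d_1\le C_\eta n^{\gamma}$. Moreover, all degrees beyond rank $k+1$ are at most $d_{k+1}$. I will work conditionally on $\Ab\in\mathcal{G}_n$ and verify condition \eqref{eqn:cons_cond1}.

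\textbf{Variance bounds.} On $\mathcal{G}_n$, we have $\sigma_{n,k+1}^2\le n\alpha_n + d_{k+1}\beta_n \lesssim n\alpha_n + n^{\gamma}$. Similarly, $\sigma_{n,k}^2\lesssim n\alpha_n+n^\gamma$. The hypothesis $\alpha_n=o(n^{-b/(2+b)}/\log n)$ gives $n\alpha_n = o(n^{2/(2+b)}/\log n)=o(n^{2\gamma}/\log n)$. Therefore
\[
\sqrt{2\log n}\,\sigma_{n,k+1}\lesssim \sqrt{\log n}\big(\sqrt{n\alpha_n}+n^{\gamma/2}\big)=o(n^{\gamma})+O(n^{\gamma/2}\sqrt{\log n})=o(n^\gamma).
\]
The terms $\sigma_{n,k}\sqrt{2L_k}$ and $L_k$ are of smaller order for fixed $\delta$ and $k$. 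Since $1-\alpha_n-\beta_n\ge c_0$, the right side of \eqref{eqn:cons_cond1} is $o(n^\gamma)$, which is below $c_\eta n^\gamma$ for large $n$.

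\textbf{Handling the hypothesis $\sigma_{n,k+1}\to\infty$.} This hypothesis may fail, for example when $\alpha_n\to0$ fast and $\beta_n$ is tiny. I would avoid relying on Proposition~\ref{prop:consistency} as a black box and instead rerun its proof directly. By Bernstein's inequality, for each $i\le k$ we have $\tilde d_i\ge \mu_{n,i}-t$ with probability at least $1-\delta/(2k)$, where $t\asymp \sigma_{n,i}\sqrt{\log(k/\delta)}+\log(k/\delta)$. We also need a union bound over the $n-k$ nodes $j>k$, giving $\tilde d_j\le \mu_{n,j}+s$ with probability at least $1-\delta/2$, where $s\asymp \bar\sigma\sqrt{\log(n/\delta)}+\log(n/\delta)$ and $\bar\sigma^2=\max_j\sigma_{n,j}^2\lesssim n\alpha_n+n^\gamma$. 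The union bound also covers the case of bounded variance, since the additive $\log n$ term is $o(n^\gamma)$. In addition, $\mu_{n,i}-\mu_{n,j}=(1-\alpha_n-\beta_n)(d_i-d_j)\ge c_0c_\eta n^\gamma$, which dominates $s+t=o(n^\gamma)$. Hence $\bbP(\widetilde S_k\ne S_k\mid\Ab)\le\delta$ on $\mathcal{G}_n$. Letting $\delta=\delta_n\to0$ slowly, and noting that $\log(1/\delta_n)$ is still negligible, gives
\[
\bbP(\widetilde S_k\ne S_k)\le \eta+\delta_n+o(1).
\]
Since $\eta$ is arbitrary, the claim follows.

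\textbf{Main obstacle.} The main obstacle is the first step: justifying $n^{\gamma}$-order separation of the top $k+1$ degrees with high probability. This requires the almost-sure convergence of the maximal degrees together with the absence of atoms and ties among the limits $\xi_i$. I would cite M\'ori (2005) for the tree case, or the general offset-$b$ extension, rather than reprove it. A subtlety is that the fixed labels of the oldest nodes differ from the ranked order. This is handled by noting that the top-$k$ degrees are attained, with probability tending to one, among the first $M_\eta$ nodes, where $M_\eta$ is fixed.
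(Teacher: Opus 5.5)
Your argument is correct. It shares the paper's skeleton: an $n^{1/(2+b)}$-order separation of the top $k+1$ degrees with no ties in the limit, noise fluctuations of order $o_p(n^{1/(2+b)})$ under the rate on $\alpha_n$, and then integration over $\Ab$. For the separation the paper uses Lemma~\ref{lem:PA_limits} and Lemma~\ref{lem:no_ties_nu}, via the branching-process construction of Bhamidi et al.; you cite M\'ori instead.

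The real difference is the concentration step. The paper does not redo any concentration. It sets $i^*=k+J$ for fixed $J$, takes $\delta_n=1/\log n$, and checks the boundary and bulk conditions of Theorem~\ref{thm:consistency_general}. Its control of the lower-ranked bulk therefore comes from the extreme-value envelope of Lemma~\ref{lem:quantile_bound}, which rests on a local CLT and moderate-deviation argument.

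Your direct Bernstein bound, with a crude union bound over all $n-k$ lower-ranked nodes, is more elementary and loses nothing here. The paper's bulk term carries the same factor $\sqrt{2\log n}\,\sigma_{n,i^*}$, so both routes need exactly $\sqrt{n\alpha_n\log n}=o(n^{1/(2+b)})$, which is the stated hypothesis on $\alpha_n$.

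Your route is also more rigorous. Theorem~\ref{thm:consistency_general}, through Lemma~\ref{lem:quantile_bound}, explicitly requires $\sigma_{n,i^*}\to\infty$. The paper never verifies this in its proof, and it can fail under the theorem's assumptions, for example when $\alpha_n$ and $\beta_n$ both decay quickly or vanish. Bernstein's inequality needs no lower bound on the variance, so you were right to flag this and bypass it. The paper's route gains reuse of its general machinery, whose sharper constants matter near the recovery threshold. They are irrelevant here, where the degree gap beats the noise scale by a polynomial factor.

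One small correction: the claim that the top-$k$ degrees are attained among the first $M_\eta$ nodes should hold with probability at least $1-\eta$ uniformly in $n$, not with probability tending to one for fixed $M_\eta$. Alternatively, cite joint convergence in distribution of the ranked top degrees directly, as in Lemma~\ref{lem:PA_limits}; convergence in distribution plus the Portmanteau theorem already gives your event $\mathcal{G}_n$.
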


\begin{rmk}[Interpretation]\label{rmk:PA_interpretation}
Theorem~\ref{thm:PA_joint} reflects the structural robustness of degree-based ranking in PA networks.
\begin{itemize}
\item \textbf{Scale separation.}
Power-law growth produces large gaps among the top degrees, so the degree separation among leading hubs can dominate the fluctuations induced by edge noise.

\item \textbf{Localization of errors.}
Once attention is restricted to leading hubs, interference from moderate-degree nodes is negligible; the dominant source of error arises near the top-$k$ boundary.

\item \textbf{Asymmetry and tolerance to noise.}
The theorem imposes a vanishing-rate condition on $\alpha_n$, but no comparable decay condition on $\beta_n$ beyond $\alpha_n+\beta_n\le 1-c_0$. Thus, in the present analysis, edge additions are the more critical source of instability. At the same time, the requirement
\[
\alpha_n=o\!\Big(n^{-\frac{b}{2+b}}\tfrac{1}{\log n}\Big)
\]
is relatively mild, showing that exact recovery persists even when the edge-addition noise decays rather slowly with $n$.
\end{itemize}
\end{rmk}

\subsubsection{Infeasibility in Dense Erd\H{o}s--R\'enyi Graphs}\label{sec:infeasibility_ER}
As a contrasting example, we consider the Erd\H{o}s--R\'enyi (ER) model. In $\mathrm{ER}(n,p_n)$, all edges appear independently with probability $p_n$, so node degrees concentrate sharply around $(n-1)p_n$ with fluctuations of order $\sqrt{np_n(1-p_n)}$.

\begin{thm}[Dense ER: impossibility of exact top-$k$ recovery]\label{thm:ER_threshold}
Let $\Ab \sim \mathrm{ER}(n,p_n)$ with $p_n\in[c_1,1-c_1]$ for some constant $c_1\in(0,1/2)$, and fix $k\ge 1$.
Consider the noise model~\eqref{eqn:noise_model} with $\alpha_n+\beta_n\le 1-c_0$ for some $c_0>0$. 
If
\[
\alpha_n \gg \frac{1}{(\log n)^2}
\qquad\text{or}\qquad
\beta_n \gg \frac{1}{(\log n)^2},
\]
then there exists a constant $c\in(0,1)$, independent of $n$, such that for all $n$ large enough,
\[
\bbP\bigl(S_k \neq \widetilde S_k\bigr)\ge 1-c.
\]
\end{thm}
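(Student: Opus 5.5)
The plan is to reduce to the boundary case of Theorem~\ref{thm:infeasibility} (or Corollary~\ref{coro:infeasibility}). We show that, with probability bounded away from zero over the draw of $\Ab$, the realized degree sequence lies in the weak-separation regime, and that on this event the conditional failure probability is bounded below. Since the failure probability is required to be close to one, we will in fact argue directly, and more strongly, through a near-tie comparison between the $k$-th and $(k+1)$-th nodes.

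\textbf{Step 1: degree profile of dense ER.} Conditional on nothing, the degrees $d_i$ are $\mathrm{Bin}(n-1,p_n)$ with weak dependence. By standard extreme-value results for binomial degree sequences (Bollob\'as), the top order statistics behave like those of nearly independent Gaussians with scale $s_n=\sqrt{np_n(1-p_n)}\asymp\sqrt n$. The spacings $d_{(k)}-d_{(k+1)}$ are then of order $s_n/\sqrt{2\log n}\asymp \sqrt{n/\log n}$, and they are $O_P$ of this order with a nondegenerate limit law. Moreover, for any fixed $M$, among the top $k+M$ nodes all pairwise gaps are $O_P(\sqrt{n/\log n})$.

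\textbf{Step 2: noise scale versus gap.} Under \eqref{eqn:noise_model}, given $\Ab$, we have
\[
\tilde d_i-\mu_{n,i}=\sum_j (Y_{ij}-\bbE Y_{ij}),
\qquad
\mu_{n,i}=(n-1)\alpha_n+(1-\alpha_n-\beta_n)d_i .
\]
The variance is $\sigma_{n,i}^2\asymp n(\alpha_n+\beta_n)\gtrsim n\max(\alpha_n,\beta_n)$, using $p_n\in[c_1,1-c_1]$. The signal difference is $(1-\alpha_n-\beta_n)(d_k-d_{k+1})\asymp\sqrt{n/\log n}$. Hence
\[
\frac{\text{signal}}{\text{noise}}\asymp \frac{1}{\sqrt{\log n\,\max(\alpha_n,\beta_n)}} .
\]
This is $o(1)$ precisely when $\max(\alpha_n,\beta_n)\gg 1/\log n$, whereas the hypothesis $\gg(\log n)^{-2}$ only makes it $o(\sqrt{\log n})$. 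I therefore expect the intended mechanism to use not one pair but many competitors. There are $m\approx$ polylog-many nodes within $\sigma$-scale of $d_k$. The noise maximum over these $m$ independent-ish perturbations fluctuates on the scale $\sigma\sqrt{2\log m}$, while their true spread is at most about $s_n(\log\log n)/\sqrt{\log n}$. Under $\max(\alpha_n,\beta_n)\gg(\log n)^{-2}$, the noise $\sigma\gg\sqrt n/\log n$ dominates the spread of the top $m$ true degrees, which is of order $s_n\log m/\sqrt{\log n}$ after choosing $m$ appropriately. The noisy ranking among these $m$ nodes is then asymptotically exchangeable. So the probability that the true top $k$ are exactly the noisy top $k$ is about $1/\binom{m}{k}\to0$, and in particular is at most $c$.

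\textbf{Step 3: formalizing.} We condition on $\Ab$ in a high-probability event $\mathcal{E}$ on which the top $m$ degrees lie in a window of width $w_n=o(\sigma_{n,k})$ and all $\sigma_{n,i}$ are comparable. The noise contributions $\tilde d_i-\mu_{n,i}$ for $i\le m$ share only $\binom m2$ edges. We replace them by independent Gaussians via a multivariate Berry--Esseen or Lindeberg coupling, with error $O(m^{c}/\sqrt{\sigma^2})$; this is the same Gaussian approximation underlying Theorem~\ref{thm:infeasibility}, and its hypothesis $\log^7(kn)=o(\sigma^2)$ holds here. For independent Gaussians with means within $o(\sigma)$ of each other, the probability that a specified set of $k$ indices forms the top $k$ among $m$ is $\binom mk^{-1}+o(1)$. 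We must also rule out nodes outside the top $m$ only in the sense that they can only cause further failure, so no control of them is needed. Integrating over $\mathcal{E}$ gives $\bbP(S_k=\widetilde S_k)\le \bbP(\mathcal{E}^c)+\binom mk^{-1}+o(1)\le c$.

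\textbf{Main obstacle.} The main obstacle is Step 1: obtaining a quantitative statement that polylog-many top ER degrees fall within a window $o(\sigma_{n,k})$ when $\sigma^2$ is only $\gg n/(\log n)^2$. I would first try a second-moment count of nodes with degree in $[x_n-w_n,x_n]$ for $x_n$ near the top quantile, using near-independence of binomial degrees in dense ER. If that proves too delicate, I would fall back to $m=k+1$, which yields a constant $c<1$ rather than a vanishing success probability, matching the statement as written.
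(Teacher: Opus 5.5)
Your proposal does not establish the statement. The step that fails is the one you flagged as the main obstacle.

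In Step 2 you assert that, under $\max(\alpha_n,\beta_n)\gg(\log n)^{-2}$, the noise scale dominates the spread $s_n\log m/\sqrt{\log n}$ of the top $m$ true degrees. It does not. For every $m\ge k+1\ge 2$ that spread is at least of order $\sqrt{n/\log n}$, which exceeds $\sqrt n/\log n$ by a factor $\sqrt{\log n}$, while $\sigma_{n,k}\asymp\sqrt{n(\alpha_n+\beta_n)}$. You therefore need $\gamma_n:=\sqrt{(\alpha_n+\beta_n)\log n}\gg\log m$, i.e.\ $(\alpha_n+\beta_n)\log n\to\infty$. That is the same $1/\log n$ threshold your single-pair computation produced. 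Equivalently, by the Poisson limit for the top of a dense ER degree sequence, the expected number of other nodes whose true degree lies within $\sigma_{n,k}/(1-\alpha_n-\beta_n)$ below $d_k$ is of order $\gamma_n$. This tends to zero when $(\log n)^{-2}\ll\max(\alpha_n,\beta_n)\ll(\log n)^{-1}$, so there are no polylog-many near-tied competitors to exploit. The fallback $m=k+1$ fails for the same reason, since it needs $d_k-d_{k+1}=O_P(\sigma_{n,k})$.

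This cannot be repaired, because in that intermediate regime the statement is false. Run your Step 2 comparison against all competitors, taking $k=1$. The normalized gap $(1-\alpha_n-\beta_n)(d_1-d_r)/\sigma_{n,1}$ is of order $\log r/\gamma_n$. The rescaled spacing $(d_1-d_2)/\sqrt{n/\log n}$ has a limit law with bounded density. A Bernstein union bound then gives $\bbP(S_1\neq\widetilde S_1)=O(\gamma_n)+o(1)\to 0$, for instance with $\alpha_n=(\log n)^{-3/2}$ and $\beta_n=0$.

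The paper reaches the $(\log n)^{-2}$ rate by a different route. It bounds $d_k-d_{k+1}\le 2C(n)\sqrt{np_n/\log n}$ using Lemma~\ref{lem:ER_maximum}, and checks that this lies below the threshold $\bar\delta_k^{\mathrm{bdry}}\asymp\sqrt{2\log n}\,\sigma_{n,k+1}$ of Corollary~\ref{coro:infeasibility}. The extra $\sqrt{\log n}$ in that threshold comes from the least-favourable configuration $d_{k+1}=\cdots=d_n$, in which all $n-k$ tail nodes sit at the boundary. Moreover, the corollary only lower-bounds a supremum over a degree class. It therefore says nothing about the particular ER degree sequence, whose tail is spread over a range of order $\sqrt{n\log n}$. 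The reduction in your opening paragraph is invalid for the same reason.

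Your Step 2 diagnosis is the right one. It agrees with the $1/\log n$ condition under which Theorem~\ref{thm:ER_lower_bound} gives its $k/2$ conclusion. What your argument does prove is a corrected statement. With $m=k+1$, the failure probability is bounded away from zero when $\max(\alpha_n,\beta_n)\gtrsim 1/\log n$. With $m_n\to\infty$ slowly and $\log m_n=o(\gamma_n)$, it tends to one when $\max(\alpha_n,\beta_n)\log n\to\infty$.
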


\begin{rmk}[Interpretation and comparison with PA]
In dense ER graphs, degrees fluctuate on the scale $\sqrt{n p_n(1-p_n)}$, while the separation between the largest degrees is much smaller than $\sqrt n$.
As a result, noise levels that are negligible relative to the mean degree, yet large relative to the intrinsic degree gaps, are sufficient to destabilize the top-$k$ ranking.

This behavior contrasts sharply with PA models, where heavy-tailed degree distributions produce hubs with degrees of order $n^{1/(2+b)}$ and macroscopic gaps between top-ranked nodes. In such heterogeneous networks, the degree signal dominates noise, allowing consistent top-$k$ recovery under substantially larger perturbations.
\end{rmk}

\subsection{Deviation of empirical top-$k$ set from the truth}\label{sec:set_deviations}
Exact recovery gives only a binary notion of success. In many noisy regimes, this criterion is too stringent: the empirical top-$k$ set may fail to equal $S_k$ exactly, yet still retain substantial overlap with it. To quantify this approximate recovery behavior, we study the Hamming distance
\[
d_H(S_k,\widetilde S_k):=|S_k\triangle \widetilde S_k|,
\]
and derive upper and lower bounds on its expectation. These bounds provide a general characterization of the reliability of $\widetilde S_k$ in terms of the degree sequence and the noise parameters.

\subsubsection{Lower bound for set recovery error}
We establish a lower bound on the Hamming distance, which quantifies the intrinsic difficulty of recovering the top-$k$ set under noise. The following lemma gives a general model-free bound that depends only on the full degree sequence and the noise parameters $\alpha_n$ and $\beta_n$.

\begin{lem}\label{lem:lower_bound}
Let $t=t(\widetilde d_1,\ldots,\widetilde d_n)$ be any measurable random threshold such that
\[
\widetilde d_{(k+1)} \le t \le \widetilde d_{(k)}
\qquad \text{almost surely}.
\]
Then
\begin{align}
d_H(S_k,\widetilde S_k)
&\ge
2\max\left(
\sum_{i=1}^k \bone\{\widetilde d_i < t\},
\sum_{i=k+1}^n \bone\{\widetilde d_i > t\}
\right). \label{eq:lower-bound-max}
\end{align}
Consequently,
\begin{align}
\frac{1}{2}\,\bbE\!\left[d_H(S_k,\widetilde S_k)\right]
&\ge
\bbE\!\left[
\max\left(
\sum_{i\in S_k}\bone\{\widetilde d_i < t\},
\sum_{i\in S_k^c}\bone\{\widetilde d_i > t\}
\right)
\right] \label{eq:lower-bound-max-exp1}\\
&\ge
\max\left(
\sum_{i\in S_k}\bbP\!\left(\widetilde d_i < t\right),
\sum_{i\in S_k^c}\bbP\!\left(\widetilde d_i > t\right)
\right). \label{eq:lower-bound-max-exp2}
\end{align}
\end{lem}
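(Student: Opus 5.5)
The argument is deterministic for the first inequality; the expectation bounds then follow from monotonicity of expectation and Jensen's inequality. Throughout, fix a realization of $(\widetilde d_1,\ldots,\widetilde d_n)$ together with the random tie-breaking used to define $\widetilde S_k$. The first step is to relate the threshold $t$ to $\widetilde S_k$. Since $\widetilde d_{(k+1)}\le t\le \widetilde d_{(k)}$, every node $i$ with $\widetilde d_i>t$ satisfies $\widetilde d_i>\widetilde d_{(k+1)}$, so it is among the strict top and must lie in $\widetilde S_k$. Likewise, every node with $\widetilde d_i<t$ satisfies $\widetilde d_i<\widetilde d_{(k)}$, so it cannot lie in $\widetilde S_k$. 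Ties at $t$ are irrelevant, because the indicators in \eqref{eq:lower-bound-max} use strict inequalities.

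Next, set $a:=|S_k\setminus\widetilde S_k|$ and $b:=|\widetilde S_k\setminus S_k|$. Because $|S_k|=|\widetilde S_k|=k$, we have $a=b$, and hence $d_H(S_k,\widetilde S_k)=a+b=2a=2b$. By the first step,
\[
\{i\in S_k:\widetilde d_i<t\}\subseteq S_k\setminus\widetilde S_k,
\qquad
\{i\in S_k^c:\widetilde d_i>t\}\subseteq \widetilde S_k\setminus S_k .
\]
It follows that $\sum_{i\le k}\bone\{\widetilde d_i<t\}\le a$ and $\sum_{i>k}\bone\{\widetilde d_i>t\}\le b=a$. Therefore the maximum of the two sums is at most $a=d_H/2$, which is \eqref{eq:lower-bound-max}. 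The only point needing care is the bookkeeping identity $a=b$, which comes from both sets having cardinality $k$.

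For the expectation statements, taking expectations of \eqref{eq:lower-bound-max} gives \eqref{eq:lower-bound-max-exp1} directly. For \eqref{eq:lower-bound-max-exp2}, use $\bbE\max(X,Y)\ge\max(\bbE X,\bbE Y)$, which holds because $\max(X,Y)\ge X$ and $\max(X,Y)\ge Y$ pointwise. Then apply linearity, $\bbE\,\bone\{\widetilde d_i<t\}=\bbP(\widetilde d_i<t)$, which is well defined since $t$ is measurable. All expectations are conditional on $\Ab$, as elsewhere in the paper. I do not expect any step to be a genuine obstacle; the main thing to get right is the handling of ties at the threshold, which the strict inequalities take care of.
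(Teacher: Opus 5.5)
Your proposal is correct and follows the paper's proof essentially step for step. Both use the identity $\tfrac12 d_H = |S_k\setminus\widetilde S_k| = |\widetilde S_k\setminus S_k|$ (from equal cardinalities) and the two set inclusions implied by $\widetilde d_{(k+1)}\le t\le\widetilde d_{(k)}$, and then obtain the expectation bounds by taking expectations and applying $\bbE\max(X,Y)\ge\max(\bbE X,\bbE Y)$ together with linearity.
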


\begin{rmk}
Here $t=t(\widetilde d_1,\ldots,\widetilde d_n)$ is a measurable function of the noisy degree vector, so the probabilities in \eqref{eq:lower-bound-max-exp2} are taken with respect to the joint distribution of $(\widetilde d_1,\ldots,\widetilde d_n)$.
\end{rmk}

We make two observations about this bound.
\begin{itemize}
    \item 
    \textbf{Dependence on degree distribution.}
    The bound depends on the entire degree sequence, not only on the boundary separation \(\Delta_k^{\mathrm{bdry}}\). Since different network models induce different degree distributions, deriving an explicit universal lower bound is generally infeasible. However, within a given model class or under structural assumptions, the bound becomes more interpretable and tractable. 
    \item
    \textbf{Choice of threshold $t$.}
    To obtain the strongest lower bound in practice, one may optimize over admissible choices of $t$.
\end{itemize}

To illustrate the behavior of the lower bound, we consider a representative homogeneous network model.

\textbf{Example: ER Networks.}
While Theorem~\ref{thm:ER_threshold} rules out \emph{exact} top-$k$ recovery in dense ER graphs, it does not quantify the magnitude of the resulting error. 
We therefore study the expected Hamming distance $d_H(S_k,\widetilde S_k)$ under noise. 
\begin{thm}\label{thm:ER_lower_bound}
Assume \(\Ab\sim \mathrm{ER}(n,p_n)\) with \(p_n\in[c_1,1-c_1]\) for some constant \(c_1\in(0,1/2)\), and let \(k\) be fixed. Assume
\[
\alpha_n \gtrsim \frac{\log n}{n}
\qquad\text{or}\qquad
\beta_n \gtrsim \frac{\log n}{n}.
\]
Let \(q_n:=1-p_n\), let \(C(n)\to\infty\) arbitrarily slowly, and define
\[
c_n(\alpha_n,\beta_n)
:=
\left[q_n-\left(2 p_n q_n \frac{\log n}{n}\right)^{1/2}\right]\alpha_n(1-\alpha_n)
+
\left[p_n-\left(2 p_n q_n \frac{\log n}{n}\right)^{1/2}\right]\beta_n(1-\beta_n).
\]
Then
\begin{align}
\frac{1}{2}\,\bbE\!\left[d_H(S_k,\widetilde S_k)\right]
&\ge
k\,
\Phi\!\left(
-\frac{2C(n)}{\sqrt{c_n(\alpha_n,\beta_n)}\,\sqrt{\log n}}
\right)
-o(1).
\label{eq:ER-lb-simple}
\end{align}
\end{thm}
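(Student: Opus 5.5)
We will derive the bound from Lemma~\ref{lem:lower_bound}, using its first term. It suffices to exhibit an admissible threshold $t$ for which each true top-$k$ node $i\le k$ satisfies $\bbP(\widetilde d_i<t)\ge \Phi(-2C(n)/(\sqrt{c_n}\sqrt{\log n}))-o(1/k)$. Summing over $i\in S_k$ then gives \eqref{eq:ER-lb-simple}, since $k$ is fixed.

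\textbf{Step 1: the degree profile.} We work on a high-probability event $\mathcal E$ for $\Ab$. Standard Gaussian extreme-value theory for the nearly independent binomial degrees of dense ER gives $d_{(1)},\dots,d_{(k)}= (n-1)p_n+\sqrt{2np_nq_n\log n}\,(1+o(1))$. Moreover, the top $k$ degrees lie within a window of width $O_P(\sqrt{np_nq_n/\log n})$ of one another, and of the $(k+1)$-st degree. On $\mathcal E$ every $d_i\le np_n+\sqrt{2np_nq_n n\log n}$, so $n-1-d_i\ge n[q_n-(2p_nq_n\log n/n)^{1/2}]$ and $d_i\ge n[p_n-(2p_nq_n\log n/n)^{1/2}]$ roughly. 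This yields $\sigma_{n,i}^2\ge n\,c_n(\alpha_n,\beta_n)$ for all $i$ in the relevant range, which is where $c_n$ comes from. The assumption $\alpha_n\gtrsim \log n/n$ or $\beta_n\gtrsim\log n/n$ ensures $\sigma_{n,i}^2\gtrsim \log n\to\infty$. The Berry--Esseen theorem then makes the normal approximation to $\widetilde d_i$ accurate up to $O(1/\sigma_{n,i})=o(1)$.

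\textbf{Step 2: choice of threshold.} We take $t=\widetilde d_{(k+1)}$ (ties handled by any measurable rule), which is admissible. For fixed $i\le k$, we lower bound
\[
\bbP(\widetilde d_i<t)\ \ge\ \bbP\big(\widetilde d_i<\widetilde d_{j}\big)
\]
for a well-chosen competitor $j>k$; more crudely, $t\ge \max_{j>k,\,j\ne i}\widetilde d_j$ restricted to $\{k+1,\dots\}$. The centered difference $\widetilde d_j-\widetilde d_i-(\mu_{n,j}-\mu_{n,i})$ is a sum of independent bounded terms; only the shared edge $ij$ couples them, and that contributes $O(1)$. Its variance is therefore at least $2n c_n$. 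Its mean is $-(1-\alpha_n-\beta_n)(d_i-d_j)$. Using Step 1 and a union-free argument over the top few order statistics, $d_i-d_{k+1}\le 2C(n)\sqrt{n}/\sqrt{\log n}$ with probability $1-o(1)$, since the spacings of Gaussian maxima are $O_P(\sigma/\sqrt{\log n})$ and $C(n)\to\infty$. Applying the normal approximation gives
\[
\bbP(\widetilde d_i<\widetilde d_{k+1})\ge \Phi\!\Big(-\frac{2C(n)\sqrt n/\sqrt{\log n}}{\sqrt{n c_n}}\Big)-o(1).
\]
Here we absorb the factors $\sqrt2$ and $(1-\alpha_n-\beta_n)\le 1$ in the favorable direction (the argument of $\Phi$ only decreases when bounded as stated). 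Taking expectation over $\Ab$ and adding $\bbP(\mathcal E^c)=o(1)$ completes the bound.

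\textbf{Main obstacle.} The delicate step is the spacing statement: with probability $1-o(1)$, $d_i-d_{k+1}\le 2C(n)\sqrt{n}/\sqrt{\log n}$ for all $i\le k$. Degrees in ER are weakly dependent through shared edges. I would handle this by writing $d_i=\sum_{j}A_{ij}$ and comparing with independent binomials via the decomposition that removes the $O(1)$ edges among the top nodes. Alternatively, I would invoke the known result that the normalized maximal degrees of $G(n,p)$ converge to the Gumbel point process, which gives tight spacings of order $\sqrt{np_nq_n/(2\log n)}$. The slowly growing $C(n)$ absorbs the $O_P(1)$ Gumbel fluctuations.
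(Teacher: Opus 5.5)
\textbf{Gap in Step 2.} With $t=\widetilde d_{(k+1)}$, the reduction $\bbP(\widetilde d_i<t)\ge\bbP(\widetilde d_i<\widetilde d_j)$ for a single competitor $j>k$ is false. The event $\{\widetilde d_i<\widetilde d_{(k+1)}\}$ requires at least $k+1$ nodes to have noisy degree strictly above $\widetilde d_i$. The event $\{\widetilde d_i<\widetilde d_j\}$ supplies only one such node. For instance, if $\widetilde d_j$ is the largest noisy degree and $\widetilde d_i$ the second largest, the latter event holds but $\widetilde d_i\ge t$.

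Your cruder claim $t\ge\max_{j>k,\,j\ne i}\widetilde d_j$ has the inequality backwards. The $k+1$ largest noisy degrees cannot all lie in $S_k$, so in fact $\widetilde d_{(k+1)}\le\max_{j>k}\widetilde d_j$. Switching to the also-admissible $t=\widetilde d_{(k)}$ repairs this only for $k=1$.

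Nor can you patch it by comparing $\widetilde d_i$ with a fixed set of $k+1$ near-top competitors. Those competitors have $d_j\le d_i$ and essentially exchangeable noise, so the probability that $\widetilde d_i$ falls below all of them is at most about $1/(k+2)$. The target factor $\Phi(-2C(n)/(\sqrt{c_n}\sqrt{\log n}))$ tends to $1/2$ exactly in the regime where the theorem is informative. So the spacing $d_i-d_{k+1}$, which you flag as the main obstacle, is not the crux.

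\textbf{What is missing.} You need a deterministic level that $\widetilde d_{(k+1)}$ exceeds with probability $1-o(1)$, i.e.\ extreme-value control of the whole noisy degree sequence rather than pairwise comparisons. The paper obtains this as follows.
\begin{itemize}
\item Unconditionally on $\Ab$, $\Yb\sim\mathrm{ER}(n,\widetilde p_n)$ with $\widetilde p_n=(1-\alpha_n-\beta_n)p_n+\alpha_n$.
\item Lemma~\ref{lem:ER_maximum} applied to $\Yb$ then gives $\widetilde d_{(k+1)}\ge\widetilde H^*-\widetilde\Delta_n$ with high probability. Here $\widetilde H^*$ is $H^*$ with $p_n$ replaced by $\widetilde p_n$, and $\widetilde\Delta_n=C(n)\sqrt{n\widetilde p_n/\log n}$.
\item Hence $\bbP(\widetilde d_i<t)\ge\bbP(\widetilde d_i<\widetilde H^*-\widetilde\Delta_n)-o(1)$.
\item On the event $d_i\le H^*+\Delta_n$, with $\Delta_n=C(n)\sqrt{np_n/\log n}$, the inequality $\sqrt{\widetilde p_n\widetilde q_n}\ge(1-\alpha_n-\beta_n)\sqrt{p_nq_n}$ yields $\widetilde H^*-\widetilde\Delta_n-\mu_{n,i}\ge-(1-\alpha_n-\beta_n)\Delta_n-\widetilde\Delta_n\ge-2C(n)\sqrt{n/\log n}$.
\item Your Step 1 then finishes the argument: the variance bound $\sigma_{n,i}^2\ge nc_n$, plus Berry--Esseen for the single sum $\widetilde d_i$ given $\Ab$.
\end{itemize}
Note that the factor $2C(n)$ comes from $\Delta_n+\widetilde\Delta_n$, not from a degree spacing.
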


These bounds show that under moderate noise, the expected number of misidentified top-$k$ nodes remains bounded away from zero. In particular, if $\alpha_n \gg 1/\log n$ or $\beta_n \gg 1/\log n$, then
\[
\frac{1}{2}\bbE\!\left[d_H(S_k,\widetilde S_k)\right]
\ge
\frac{k}{2}-o(1).
\]
Equivalently,
\[
\liminf_{n\to\infty}\frac{1}{2}\bbE\!\left[d_H(S_k,\widetilde S_k)\right]\ge \frac{k}{2}.
\]
This indicates that degree-based top-$k$ recovery loses meaningful discriminative power in homogeneous networks.

\subsubsection{Upper bound for set recovery error}
We now derive an upper bound on the expected Hamming distance, which quantifies the achievable accuracy of top-$k$ recovery under noise. 
\begin{lem}\label{lem:upper_bound}
Let $t=t(\widetilde d_1,\ldots,\widetilde d_n)$ be any measurable random threshold such that
\[
\widetilde d_{(k+1)} \le t \le \widetilde d_{(k)}
\qquad \text{almost surely}.
\]
Then
\begin{align}
d_H(S_k,\widetilde S_k)
&\le
2\min\left(
\sum_{i=1}^k \bone\{\widetilde d_i \le t\},
\sum_{i=k+1}^n \bone\{\widetilde d_i \ge t\}
\right). \label{eq:upper-bound-min}
\end{align}
Consequently,
\begin{align}
\frac{1}{2}\,\bbE\!\left[d_H(S_k,\widetilde S_k)\right]
\;\le\;
\min\left(
\sum_{i\in S_k}\bbP(\widetilde d_i \le t),
\sum_{i\in S_k^c}\bbP(\widetilde d_i \ge t)
\right). \label{eq:upper-bound-min-exp}
\end{align}
\end{lem}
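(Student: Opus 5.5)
The plan is to prove the pathwise inequality \eqref{eq:upper-bound-min} first and then take expectations. Let $A:=S_k\setminus\widetilde S_k$ (true top-$k$ nodes missed) and $B:=\widetilde S_k\setminus S_k$ (spurious nodes selected). Since $|S_k|=|\widetilde S_k|=k$, we have $|A|=|B|$, so
\[
d_H(S_k,\widetilde S_k)=|A|+|B|=2|A|=2|B|.
\]
It therefore suffices to show two deterministic inclusions:
\[
A\subseteq\{i\in S_k:\widetilde d_i\le t\}
\qquad\text{and}\qquad
B\subseteq\{j\in S_k^c:\widetilde d_j\ge t\}.
\]
Then $2|A|\le 2\sum_{i\le k}\bone\{\widetilde d_i\le t\}$ and $2|B|\le 2\sum_{j>k}\bone\{\widetilde d_j\ge t\}$. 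Since $d_H$ equals both $2|A|$ and $2|B|$, it is bounded by the minimum of the two right-hand sides.

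For the inclusions I use the defining property of $\widetilde S_k$. Every node in $\widetilde S_k$ has noisy degree at least $\widetilde d_{(k)}$, and every node outside it has noisy degree at most $\widetilde d_{(k+1)}$; this holds under any tie-breaking rule, since ties are resolved only among equal values. If $i\in A$, then $i\notin\widetilde S_k$, so $\widetilde d_i\le\widetilde d_{(k+1)}\le t$. If $j\in B$, then $j\in\widetilde S_k$, so $\widetilde d_j\ge\widetilde d_{(k)}\ge t$. The non-strict inequalities in the statement (as opposed to the strict ones in Lemma~\ref{lem:lower_bound}) are exactly what make these inclusions valid when ties occur at the threshold.

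For \eqref{eq:upper-bound-min-exp}, I divide by two and take expectations conditional on $\Ab$. Using $\bbE[\min(X,Y)]\le\min(\bbE X,\bbE Y)$ together with linearity, $\bbE\sum_{i\in S_k}\bone\{\widetilde d_i\le t\}=\sum_{i\in S_k}\bbP(\widetilde d_i\le t)$, and likewise for the second sum. As in the remark after Lemma~\ref{lem:lower_bound}, these probabilities are joint ones, since $t$ is random.

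I do not expect a genuine obstacle. The only point needing care is ties: both at the cutoff in defining $\widetilde S_k$, and in the population set $S_k$, where ties are broken at random. The argument only uses the fixed labelling $S_k=\{1,\ldots,k\}$ and the order-statistic characterization of $\widetilde S_k$, so it goes through for any realized tie-breaking.
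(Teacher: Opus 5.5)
Your proposal is correct and follows essentially the same route as the paper. Both write $\tfrac12 d_H(S_k,\widetilde S_k)=|S_k\setminus\widetilde S_k|=|\widetilde S_k\setminus S_k|$, use $\widetilde d_{(k+1)}\le t\le\widetilde d_{(k)}$ to obtain the two inclusions for the pathwise bound, and then take expectations using $\bbE[\min(X,Y)]\le\min(\bbE X,\bbE Y)$ and linearity.
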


We next illustrate this upper bound under a heterogeneous network model.

\textbf{Example: Preferential attachment networks.} We consider the PA model with a power-law degree distribution, where the expected set recovery error depends not only on boundary separation, but also on the full configuration of leading degrees.
\begin{thm} \label{thm:PA_upper_bound}
Suppose the network $\Ab$ is generated by a linear PA model with offset parameter $b>-1$, and let $k$ be fixed. Assume
\(
\alpha_n + \beta_n \le 1-c_0
\) 
for some constant $c_0>0$, and additionally
\[
\alpha_n \lesssim \frac{n^{-\frac{b}{2+b}}}{\log n}.
\]
Then
\[
\frac{1}{2} \bbE\!\left( d_H(S_k, \widetilde{S}_k) \right) = o(1),
\]
where the expectation is taken over both the random network and the noise model.
\end{thm}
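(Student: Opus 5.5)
The natural route is to reduce the statement to the exact-recovery result for PA networks, Theorem~\ref{thm:PA_joint}, using the fact that the Hamming distance is bounded. Since $|S_k|=|\widetilde S_k|=k$, we always have $d_H(S_k,\widetilde S_k)\le 2k$, and the distance vanishes on the event $\{\widetilde S_k=S_k\}$. Hence
\[
\tfrac12\,\bbE\!\left(d_H(S_k,\widetilde S_k)\right)\le k\,\bbP\!\left(\widetilde S_k\neq S_k\right).
\]
Here $k$ is fixed and the probability is taken jointly over the PA graph and the noise. It therefore suffices to show that $\bbP(\widetilde S_k\neq S_k)=o(1)$ under the stated assumptions.

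The assumption in Theorem~\ref{thm:PA_upper_bound} is $\alpha_n\lesssim n^{-b/(2+b)}/\log n$, which is slightly weaker than the little-$o$ condition in Theorem~\ref{thm:PA_joint}. So Theorem~\ref{thm:PA_joint} cannot be invoked verbatim, and I would instead rerun its argument at the $O(\cdot)$ rate. On a high-probability event for the PA graph, the following hold for some $\varepsilon_n\to0$:
\[
d_i=\xi_i n^{1/(2+b)}(1+o(1)) \text{ for } i\le k+1, \qquad \min_{i\le k}(\xi_i-\xi_{i+1})\ge\varepsilon_n,
\]
where the limits $\xi_i$ are a.s.\ distinct with continuous joint law. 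The main tool is Lemma~\ref{lem:upper_bound}, or equivalently the union-bound structure behind Theorem~\ref{thm:consistency_general}. For a threshold $t$ placed midway between the means $\mu_{n,k}$ and $\mu_{n,k+1}$, I bound
\[
\sum_{i\le k}\bbP(\widetilde d_i\le t)\quad\text{and}\quad\sum_{j>k}\bbP(\widetilde d_j\ge t)
\]
by Bernstein's inequality applied to each noisy degree. This requires $(1-\alpha_n-\beta_n)\varepsilon_n n^{1/(2+b)}$ to dominate $\sqrt{\log n}\,\max_j\sigma_{n,j}$. Since $\sigma_{n,j}^2\le n\alpha_n+d_j\beta_n$, the edge-addition part satisfies
\[
\sqrt{n\alpha_n\log n}\lesssim n^{1/(2+b)}.
\]
The deletion part is $\sqrt{d_j\log n}=o(d_k)$. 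Thus the signal and noise are of the same order only up to constants, and the $\varepsilon_n$ gap, together with the $\lesssim$ constant, must be absorbed.

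The main obstacle is exactly this constant-order slack. The route I would try first is to use the event $\{\xi_k-\xi_{k+1}>\eta\}$ for a fixed $\eta$. Its probability tends to $1$ as $\eta\to0$. On it, the Bernstein tail is at most $\exp(-c\,\eta^2\log n/\kappa)$, where $\kappa$ is the implicit constant in $\lesssim$. Summing over the $n$ nodes outside $S_k$ gives a bound $n^{1-c\eta^2/\kappa}$, which is not small for small $\eta$. So the bulk must be handled separately, in the spirit of Theorem~\ref{thm:consistency_general}. Nodes with degree at most $\tfrac12 d_k$ have a gap of order $n^{1/(2+b)}$, and the fluctuations of their noisy degrees are uniformly controlled by the maximal-inequality term $\sqrt{2n\alpha_n\log n}$. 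Only the $O(1)$ near-boundary competitors then require the local union bound, where no $\log n$ factor arises, so each of those terms is $o(1)$ on the event.

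Letting $n\to\infty$ and then $\eta\to0$ gives $\bbP(\widetilde S_k\neq S_k)=o(1)$, and hence the claim. If the bulk maximal bound fails to close at the borderline constant, the fallback is to prove the result under the little-$o$ rate directly via Theorem~\ref{thm:PA_joint}.
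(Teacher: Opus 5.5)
Your reduction $\tfrac12\bbE\,d_H(S_k,\widetilde S_k)\le k\,\bbP(\widetilde S_k\neq S_k)$ is correct. Combined with Theorem~\ref{thm:PA_joint}, it is all the paper's one-line proof amounts to. The gap is in your attempt to close the argument at the $O(\cdot)$ rate.

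Write $\alpha_n=\kappa\,n^{-b/(2+b)}/\log n$ with $b\ge 0$. Your bulk step compares a gap $\tfrac12(1-\alpha_n-\beta_n)d_k\approx\tfrac12(1-\alpha_n-\beta_n)\nu_k\,n^{1/(2+b)}$ with a maximal fluctuation $\sqrt{2n\alpha_n\log n}=\sqrt{2\kappa}\,n^{1/(2+b)}$. These are of the same order, so the step succeeds only on an event like $\{(1-\alpha_n-\beta_n)\nu_k>2\sqrt{2\kappa}\}$, whose probability does not tend to one. Sending $\eta\to0$ does not help. The parameter $\eta$ controls the boundary gap $\xi_k-\xi_{k+1}$, whereas the obstruction here is the size of $\nu_k$ itself relative to $\sqrt{\kappa}$.

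In fact no argument can close this step, because for $b\ge0$ the statement is false at the $O(\cdot)$ rate. The $\Theta(n)$ vertices of minimal degree have noisy degrees with mean $\approx(n-1)\alpha_n$ and standard deviation $\approx\sqrt{n\alpha_n}$. By the second-moment argument behind the lower envelope of Lemma~\ref{lem:quantile_bound}, their maximum exceeds $(n-1)\alpha_n+(1-\epsilon)\sqrt{2\kappa}\,n^{1/(2+b)}$ with probability $1-o(1)$. Meanwhile $\tilde d_k=(n-1)\alpha_n+(1-\alpha_n-\beta_n)d_k+o_p(n^{1/(2+b)})$, since $\sigma_{n,k}^2\le n\alpha_n+d_k=o_p(n^{2/(2+b)})$. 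Hence
$\liminf_n\bbP(\widetilde S_k\neq S_k)\ge\bbP\bigl(\nu_k<(1-2\epsilon)\sqrt{2\kappa}\bigr)$,
which tends to $1$ as $\kappa\to\infty$, and $d_H\ge 2$ on this event. For $b\in(-1,0)$ the rate condition is vacuous and coincides with the little-$o$ one, so nothing is lost there.

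The paper's proof has the same limitation. The argument of Theorem~\ref{thm:PA_joint} writes $\alpha_n=r_n n^{-b/(2+b)}/\log n$ and needs $r_n\to0$ to make the bulk threshold $O_p(\sqrt{r_n})\,n^{1/(2+b)}=o_p(n^{1/(2+b)})$. So your fallback is the correct resolution. It should be the main argument, with the hypothesis corrected to $\alpha_n=o(n^{-b/(2+b)}/\log n)$, rather than a contingency for a borderline case that cannot be proved.
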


This result complements Theorem~\ref{thm:PA_joint} by showing that, under the same structural regime, the expected Hamming distance also vanishes.

\section{Beyond Degree Centrality: Eigenvector Centrality}\label{sec:EC}

Eigenvector centrality is a popular importance measure in network analysis, as it reflects not only the number of neighbors a node has but also their influence. However, this global dependence makes eigenvector centrality more sensitive to edge noise than degree centrality. In this section, we quantify this sensitivity under the model~\eqref{eqn:noise_model} and provide recovery guarantees under suitable spectral and separation conditions.

For a network with adjacency matrix $\Ab \in \mathbb{R}^{n \times n}$, the eigenvector centrality $x_i$ of node $i$ is defined through the principal eigenvector $\bx$ of $\Ab$:
\begin{equation}
\Ab \bx = \lambda_1 \bx,
\qquad
x_i = \frac{1}{\lambda_1}\sum_{j=1}^n A_{ij} x_j.
\end{equation}
Here, $\lambda_1$ denotes the largest eigenvalue of $\Ab$.

For the noisy network $\Yb$, the corresponding centrality vector $\tilde{\bx}$ satisfies
\begin{equation}
\Yb \tilde{\bx} = \tilde{\lambda}_1 \tilde{\bx},
\end{equation}
where $\tilde{\lambda}_1$ is the largest eigenvalue of $\Yb$. We choose the signs of $\bx$ and $\tilde{\bx}$ so that their entries are nonnegative and normalize the eigenvectors so that $\|\bx\|_2 = \|\tilde{\bx}\|_2 = 1$.

Following the degree-based setup, we define the top-$k$ set under eigenvector centrality as the $k$ nodes with the largest entries of the principal eigenvector. Let
\[
x_{(1)} \ge x_{(2)} \ge \cdots \ge x_{(n)}
\qquad\text{and}\qquad
\tilde{x}_{(1)} \ge \tilde{x}_{(2)} \ge \cdots \ge \tilde{x}_{(n)}
\]
denote the ordered entries of $\bx$ and $\tilde{\bx}$, respectively. The true and observed top-$k$ sets under eigenvector centrality are denoted by
\[
S_k^\lambda
\qquad\text{and}\qquad
\widetilde S_k^\lambda,
\]
respectively, where ties at the cutoff are broken uniformly at random.

Our goal is to characterize when $\widetilde{S}_k^\lambda$ recovers $S_k^\lambda$. To quantify the separation at the decision boundary, we define
\[
\Delta_k^\lambda := x_{(k)} - x_{(k+1)}.
\]
To state the perturbation bound explicitly, we introduce three auxiliary quantities that summarize the dominant noise terms in the analysis: 
\begin{equation}\label{eq:def-B123}
\begin{aligned}
B_1 &:= (\alpha_n+\beta_n)\sqrt{n}
       + 5\sqrt{\big[(\alpha_n+\beta_n) - (\alpha_n-\beta_n)^2\big]\log n}, \\
B_2 &:= \sqrt{2n(\alpha_n+\beta_n)+\log n}, \\  
B_3 &:= 5\sqrt{n(\alpha_n + \beta_n)} + \alpha_n n + (\alpha_n + \beta_n)\,\|\Ab\|.
\end{aligned}
\end{equation}
Their precise roles are made explicit in the proof of Theorem~\ref{thm:evec-infty-perturb}. 

Let $\lambda_1 \ge \lambda_2 \ge \cdots \ge \lambda_n$ denote the eigenvalues of $\Ab$. Combining these quantities, we define the explicit entrywise error bound
\begin{equation}\label{eq:def-eps-evec}
\epsilon_n
:=
\frac{\lambda_1 - \lambda_2 - 2B_3 - 2B_2}
     {\lambda_1 - \lambda_2 - 2B_3 - 4B_2}
\Bigg(\frac{\lambda_2}{\lambda_1} + \|\bx\|_{\infty}\Bigg)
\Bigg(
    \frac{2B_3}{\lambda_1 - \lambda_2}
    + \frac{B_3}{\lambda_1 - B_3}
\Bigg)
+
\frac{
    2B_1 + 2B_2\|\bx\|_{\infty}
}{
    \lambda_1 - \lambda_2 - 2B_3 - 4B_2
}.
\end{equation}

\begin{thm}[Entrywise perturbation bound for eigenvector centrality]
\label{thm:evec-infty-perturb}
Assume that $\lambda_1 - \lambda_2 > 2B_3 + 4B_2$.
Then with probability $1-o(1)$,
\[
\|\bx - \tilde{\bx}\|_{\infty} \le \epsilon_n.
\]
\end{thm}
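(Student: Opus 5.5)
The approach is to reduce the problem to a perturbation of the mean matrix and then run a leave-one-out style entrywise eigenvector argument. Under \eqref{eqn:noise_model}, $\bbE[\Yb\mid\Ab]=(1-\alpha_n-\beta_n)\Ab+\alpha_n(\bone\bone^\top-\Ib)$. This gives the decomposition
\[
\Yb=\Ab+\Eb,\qquad \Eb=\underbrace{\Yb-\bbE[\Yb\mid\Ab]}_{\Zb}+\underbrace{\alpha_n(\bone\bone^\top-\Ib)-(\alpha_n+\beta_n)\Ab}_{\Gb}.
\]
The deterministic part satisfies $\|\Gb\|\le \alpha_n n+(\alpha_n+\beta_n)\|\Ab\|$. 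The centered part $\Zb$ has independent bounded entries above the diagonal, each with variance at most $\alpha_n+\beta_n$. A matrix Bernstein bound, or the Bandeira--van Handel bound, therefore gives $\|\Zb\|\le 5\sqrt{n(\alpha_n+\beta_n)}$ with probability $1-o(1)$. Together these give $\|\Eb\|\le B_3$, which is exactly the role of $B_3$.

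Next I control the eigenvalue and the $\ell_2$ eigenvector error. By Weyl's inequality, $|\tilde\lambda_1-\lambda_1|\le B_3$, so $\tilde\lambda_1\ge\lambda_1-B_3>0$ and $\tilde\lambda_2\le\lambda_2+B_3$. Davis--Kahan then gives $\|\tilde\bx-\bx\|_2\lesssim 2B_3/(\lambda_1-\lambda_2)$, which explains the factor $2B_3/(\lambda_1-\lambda_2)$ in $\epsilon_n$.

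For the entrywise bound I expand row by row:
\[
\tilde x_i-x_i=\frac{(\Yb\tilde\bx)_i}{\tilde\lambda_1}-\frac{(\Ab\bx)_i}{\lambda_1}
=\frac{(\Ab(\tilde\bx-\bx))_i}{\tilde\lambda_1}+\frac{(\Eb\tilde\bx)_i}{\tilde\lambda_1}+(\Ab\bx)_i\Big(\frac{1}{\tilde\lambda_1}-\frac1{\lambda_1}\Big).
\]
The third term is at most $\|\bx\|_\infty B_3/(\lambda_1-B_3)$.

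For the first term, I split $\tilde\bx-\bx$ into its component along $\bx$ and its orthogonal part. The orthogonal part sees only $\lambda_2$. This yields a bound of order $\frac{\lambda_2}{\lambda_1}\|\tilde\bx-\bx\|$ plus a contribution proportional to $\|\tilde\bx-\bx\|_\infty$, which is later absorbed. This absorption produces the prefactor $(\lambda_1-\lambda_2-2B_3-2B_2)/(\lambda_1-\lambda_2-2B_3-4B_2)$ and the factor $(\lambda_2/\lambda_1+\|\bx\|_\infty)$.

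For the second term I write $(\Eb\tilde\bx)_i=(\Eb\bx)_i+(\Eb(\tilde\bx-\bx))_i$. Since $\bx$ is deterministic given $\Ab$, Bernstein's inequality on $(\Zb\bx)_i$ gives a bound of $5\sqrt{[(\alpha_n+\beta_n)-(\alpha_n-\beta_n)^2]\log n}$, uniformly in $i$ after a union bound. Adding $(\Gb\bx)_i\le(\alpha_n+\beta_n)\sqrt n$, using $\|\bx\|_2=1$, this term is controlled by $B_1$.

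The main obstacle is $(\Zb(\tilde\bx-\bx))_i$, because $\tilde\bx$ depends on row $i$ of $\Zb$. I would handle it with a leave-one-out argument. Let $\tilde\bx^{(i)}$ be the top eigenvector of the matrix obtained by replacing row and column $i$ of $\Yb$ by their expectations. Then $\Zb_{i\cdot}$ is independent of $\tilde\bx^{(i)}$, so Bernstein's inequality bounds $(\Zb(\tilde\bx^{(i)}-\bx))_i$ by roughly $B_2\|\tilde\bx^{(i)}-\bx\|_\infty$, where $B_2=\sqrt{2n(\alpha_n+\beta_n)+\log n}$ is the row-norm scale. A second Davis--Kahan step shows $\|\tilde\bx-\tilde\bx^{(i)}\|_2\lesssim B_2(\|\tilde\bx\|_\infty+\ldots)/(\lambda_1-\lambda_2-2B_3)$.

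Collecting all terms yields an inequality of the form
\[
\|\tilde\bx-\bx\|_\infty\le R+\frac{2B_2}{\lambda_1-\lambda_2-2B_3-2B_2}\,\|\tilde\bx-\bx\|_\infty .
\]
This can be solved because $\lambda_1-\lambda_2>2B_3+4B_2$, and solving it gives $\epsilon_n$. Getting the exact constants to match \eqref{eq:def-eps-evec} is bookkeeping; the essential difficulty is this decoupling step and the resulting self-bounding inequality.
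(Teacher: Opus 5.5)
Your overall strategy (row-wise expansion, leave-one-out decoupling, self-bounding in $\ell_\infty$) is reasonable, but it uses a different decomposition from the paper's and does not produce the stated bound $\epsilon_n$. The claim that matching \eqref{eq:def-eps-evec} is ``bookkeeping'' is where the proof breaks.

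Because you expand through the eigen-equation of $\Yb$, every row-$i$ noise contribution sits in $(\Eb\tilde\bx)_i/\tilde\lambda_1$. As a result:
\begin{itemize}
\item you get $B_1/\tilde\lambda_1$ rather than $2B_1/(\lambda_1-\lambda_2-2B_3-4B_2)$;
\item you get cross terms of order $B_2\|\tilde\bx-\tilde\bx^{(i)}\|_2/\tilde\lambda_1$;
\item you get a factor $\lambda_1/\tilde\lambda_1$, not the paper's prefactor, in front of $(\lambda_2/\lambda_1+\|\bx\|_\infty)\|\tilde\bx-\bx\|_2$.
\end{itemize}
The resulting bound is neither equal to nor obviously dominated by $\epsilon_n$.

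Moreover, the steps you cite as the source of $\epsilon_n$'s structure do not follow from your decomposition.
\begin{itemize}
\item The component of $\tilde\bx-\bx$ along $\bx$ is $-\tfrac12\|\tilde\bx-\bx\|_2^2\,\bx$. This is an $\ell_2$-squared term, not an $\ell_\infty$ term to absorb.
\item Bernstein for $(\Zb(\tilde\bx^{(i)}-\bx))_i$ gives $\sqrt{(\alpha_n+\beta_n)\log n}\,\|\cdot\|_2+\log n\,\|\cdot\|_\infty$ (or $B_2\|\cdot\|_2$ by Cauchy--Schwarz), not $B_2\|\cdot\|_\infty$.
\item The feedback coefficient $2B_2/(\lambda_1-\lambda_2-2B_3-2B_2)$ cannot arise. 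In your bound, $\|\tilde\bx\|_\infty$ enters only through $\|\tilde\bx-\tilde\bx^{(i)}\|_2$, which is multiplied by $\|\Zb_{i,\cdot}\|_2/\tilde\lambda_1\le B_2/\tilde\lambda_1$. That gives a coefficient of order $B_2^2/\bigl(\tilde\lambda_1(\lambda_1-\lambda_2-2B_3-2B_2)\bigr)$.
\end{itemize}

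The missing idea is the paper's decomposition.
\begin{itemize}
\item Set $\Yb^{(l)}=\Ab+\Eb^{(l)}$ with row and column $l$ of $\Eb$ zeroed, not replaced by expectations. Then row $l$ of $\Yb^{(l)}$ is exactly $\Ab_{l,\cdot}$.
\item Start from $|x_l-\tilde x_l|\le|x_l-\tilde x^{(l)}_l|+\|\tilde\bx^{(l)}-\tilde\bx\|_2$.
\item The first piece involves only $\Ab$, via $\tilde x^{(l)}_l=\Ab_{l,\cdot}\tilde\bx^{(l)}/\tilde\lambda^{(l)}_1$ and $x_l=\Ab_{l,\cdot}\bx/\lambda_1$. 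It yields $(\lambda_2/\lambda_1+\|\bx\|_\infty)\bigl(2B_3/(\lambda_1-\lambda_2)+B_3/(\lambda_1-B_3)\bigr)$.
\item All row-$l$ noise goes into the second piece, which enters with coefficient one. Davis--Kahan gives $2\|\Eb^{(-l)}\tilde\bx^{(l)}\|_2/(\lambda_1-\lambda_2-2B_3)$, with $\|\Eb^{(-l)}\tilde\bx^{(l)}\|_2\le B_1+B_2(\|\tilde\bx\|_\infty+\|\tilde\bx-\tilde\bx^{(l)}\|_2)$. Solving gives $(2B_1+2B_2\|\tilde\bx\|_\infty)/(\lambda_1-\lambda_2-2B_3-2B_2)$.
\item Substituting $\|\tilde\bx\|_\infty\le\|\bx\|_\infty+\|\bx-\tilde\bx\|_\infty$ gives exactly the self-bound you wrote. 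Solving it produces both the denominator $\lambda_1-\lambda_2-2B_3-4B_2$ and the prefactor on the $\Ab$-part, i.e.\ exactly $\epsilon_n$.
\end{itemize}
Your route, done carefully, proves an entrywise bound of a different form, not the inequality in the theorem.

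Separately, ``the orthogonal part sees only $\lambda_2$'' really gives $\max_{j\ge2}|\lambda_j|$, since $\|\be_i^\top\Ab(\Ib-\bx\bx^\top)\|_2\le\|\Ab-\lambda_1\bx\bx^\top\|$. The paper's bound $\|\Ab_{l,\cdot}-\lambda_1x_l\bx^\top\|\le\lambda_2$ has the same issue. It matters for bipartite graphs such as PA trees, where $\lambda_n=-\lambda_1$.
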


\begin{coro}[Top-$k$ recovery under eigenvector centrality]
\label{cor:topk-consistency}
Under the assumptions of Theorem~\ref{thm:evec-infty-perturb}, 
if $\Delta_k^\lambda > 2\epsilon_n$ for some $1 \le k < n$, 
then with probability $1-o(1)$,
\[
S_k^\lambda = \widetilde{S}_k^\lambda.
\]
\end{coro}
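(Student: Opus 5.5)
The plan is to deduce Corollary~\ref{cor:topk-consistency} deterministically from Theorem~\ref{thm:evec-infty-perturb}. Work on the event $\mathcal{E}:=\{\|\bx-\tilde{\bx}\|_\infty\le \epsilon_n\}$, which by that theorem has probability $1-o(1)$. It then suffices to show that on $\mathcal{E}$, whenever $\Delta_k^\lambda>2\epsilon_n$, the $k$ largest entries of $\tilde{\bx}$ occupy exactly the same index set as the $k$ largest entries of $\bx$.

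\textbf{Step 1 (true order).} Because $x_{(k)}>x_{(k+1)}$ strictly, there is no tie at the cutoff. Hence $S_k^\lambda$ is uniquely determined: it is the set of indices $i$ with $x_i\ge x_{(k)}$, and every $j\notin S_k^\lambda$ has $x_j\le x_{(k+1)}$.

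\textbf{Step 2 (pairwise separation).} Fix $i\in S_k^\lambda$ and $j\notin S_k^\lambda$. On $\mathcal{E}$ we have
\[
\tilde x_i-\tilde x_j \ge (x_i-\epsilon_n)-(x_j+\epsilon_n)\ge x_{(k)}-x_{(k+1)}-2\epsilon_n=\Delta_k^\lambda-2\epsilon_n>0 .
\]
So every node of $S_k^\lambda$ strictly dominates every node outside it in the noisy centrality vector.

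\textbf{Step 3 (conclusion).} By Step 2, the $k$ nodes of $S_k^\lambda$ all have larger $\tilde{\bx}$-values than all remaining $n-k$ nodes. Therefore $\tilde x_{(k)}>\tilde x_{(k+1)}$, no tie occurs at the noisy cutoff, the random tie-breaking rule is never invoked, and $\widetilde S_k^\lambda=S_k^\lambda$. This gives $\bbP(S_k^\lambda=\widetilde S_k^\lambda)\ge \bbP(\mathcal{E})=1-o(1)$.

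The only point needing care is sign and normalization. Theorem~\ref{thm:evec-infty-perturb} presumes that $\bx$ and $\tilde{\bx}$ are the nonnegative, unit-norm choices fixed in the setup. The spectral-gap assumption $\lambda_1-\lambda_2>2B_3+4B_2$ makes the leading eigenvalue of $\Yb$ simple on the high-probability event, so $\tilde{\bx}$ is well defined and the comparison in Step 2 is legitimate. Beyond this, the argument is a routine deterministic consequence of the entrywise bound, and I expect no real obstacle.
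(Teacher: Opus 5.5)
Your proposal is correct and takes the same approach as the paper. The paper gives no separate proof of this corollary and treats it as an immediate consequence of Theorem~\ref{thm:evec-infty-perturb}; the argument intended is exactly your pairwise bound $\tilde x_i-\tilde x_j\ge \Delta_k^\lambda-2\epsilon_n>0$ on the event $\{\|\bx-\tilde{\bx}\|_\infty\le\epsilon_n\}$, and your remarks on ties at the cutoff and on the sign/normalization convention supply details the paper leaves implicit.
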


\begin{asm}[Hub-dominated localization]
\label{asm:localization}
Consider a network $G$ with adjacency matrix $\Ab$ and principal eigenvector $\bx$, normalized in $\ell_2$.
Let $h$ denote the unique maximum-degree vertex in $G$, and let $
N(h):=\{i\in [n]: i\sim h\}$ 
be its neighbor set.
We assume that the network lies in a \emph{hub-dominated regime} such that:
\begin{enumerate}
\item $x_h \to c_0 > 0$ as $n\to\infty$,
      i.e., the hub retains a non-vanishing centrality score;
\item $\sum_{i \notin \{h\}\cup N(h)} x_i^2 \rightarrow 0,$ i.e., the eigenvector mass concentrates on the hub and its direct neighbors.
\end{enumerate}
\end{asm}
\begin{asm}[Hub separation]
\label{asm:hub-separation}
Under Assumption~\ref{asm:localization}, assume further that there exists a constant $c_1>0$ such that
\(
x_h - \max_{i\in N(h)} x_i \ge c_1
\)
with probability \(1-o(1)\) under the PA model.
\end{asm}

\begin{rmk}[Interpretation and scope]
Assumptions~\ref{asm:localization}--\ref{asm:hub-separation} characterize a
\emph{hub-dominated} network regime in which a single vertex exhibits
overwhelming structural importance.
In such networks, the leading eigenvector is localized around the hub and its immediate neighborhood,
and the hub entry remains separated from those of its neighbors by a non-vanishing gap.
Accordingly, the consistency result below applies most naturally to top-$1$ recovery under eigenvector centrality.
\end{rmk}

\begin{thm}\label{thm:EC_consistency}
Consider $\Ab$ from a linear PA model with attachment function $f(d)=d+b$, where $b>-1$. Assume Assumptions~\ref{asm:localization} and~\ref{asm:hub-separation}. Let $\Yb$ be the noisy observation under model~\eqref{eqn:noise_model}. If
\[
\alpha_n \ll n^{-\frac{b+3/2}{b+2}}
\quad\text{and}\quad
\beta_n \ll n^{-\frac{b+1}{b+2}},
\]
then
\[
\bbP(S_1^\lambda = \widetilde{S}_1^\lambda) = 1 - o(1).
\]
\end{thm}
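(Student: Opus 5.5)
The plan is to reduce Theorem~\ref{thm:EC_consistency} to Corollary~\ref{cor:topk-consistency} with $k=1$. Under Assumption~\ref{asm:localization} the top-$1$ node under eigenvector centrality is the hub $h$. Assumption~\ref{asm:hub-separation} gives $x_h-\max_{i\in N(h)}x_i\ge c_1$. Condition~2 of Assumption~\ref{asm:localization} gives $\max_{i\notin\{h\}\cup N(h)}x_i\to0$. Hence $\Delta_1^\lambda\ge \min(c_1, x_h-o(1))\ge c'>0$ with probability $1-o(1)$ over the PA graph.

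It therefore suffices to show two things: that the spectral-gap hypothesis $\lambda_1-\lambda_2>2B_3+4B_2$ of Theorem~\ref{thm:evec-infty-perturb} holds, and that $\epsilon_n=o(1)$. Both should hold with probability $1-o(1)$ over $\Ab$, after which the argument conditions on $\Ab$ and applies the corollary.

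\textbf{Step 1 (PA spectral facts).} I would invoke standard results for linear PA graphs. The maximum degree satisfies $d_{\max}\asymp n^{1/(2+b)}$, and the top eigenvalues satisfy $\lambda_i\sim\sqrt{d_{(i)}}$, so that
\[
\|\Ab\|=\lambda_1\asymp n^{\frac{1}{2(2+b)}}.
\]
Under hub localization, $\lambda_1$ is driven by the star at $h$, so $\lambda_2/\lambda_1\le 1-c$ for some $c>0$. Equivalently, $\lambda_1-\lambda_2\gtrsim n^{1/(2(2+b))}$. If the PA literature does not give the spectral gap directly, I would derive it from Assumptions~\ref{asm:localization}--\ref{asm:hub-separation}. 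Since $x_h\to c_0$ and $\lambda_1 x_h=\sum_{i\in N(h)}x_i$, the Rayleigh-quotient structure forces the leading eigenvalue to be separated. \textbf{This step is the main obstacle.} A rigorous gap $\lambda_1-\lambda_2\gtrsim\lambda_1$ in PA is delicate, because the top degrees are of comparable order with random constant ratios. I would try first to use the randomness of the limiting ratios $d_{(2)}/d_{(1)}<1$ almost surely.

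\textbf{Step 2 (bounding $B_1,B_2,B_3$).} With $\gamma:=1/(2+b)$, the hypotheses $\alpha_n\ll n^{-(b+3/2)/(b+2)}$ and $\beta_n\ll n^{-(b+1)/(b+2)}$ give the following:
\[
n\alpha_n\ll n^{\gamma/2},\qquad n\beta_n\ll n^{\gamma}.
\]
Then $B_2\lesssim\sqrt{n(\alpha_n+\beta_n)}+\sqrt{\log n}=o(n^{\gamma/2})$, and $B_3\le 5\sqrt{n(\alpha_n+\beta_n)}+n\alpha_n+o(1)\lambda_1=o(n^{\gamma/2})$. Likewise $B_1\le(\alpha_n+\beta_n)\sqrt n+5\sqrt{(\alpha_n+\beta_n)\log n}=o(1)$. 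Consequently $2B_3+4B_2=o(\lambda_1-\lambda_2)$, which verifies the hypothesis of Theorem~\ref{thm:evec-infty-perturb}.

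\textbf{Step 3 (plugging into $\epsilon_n$).} The prefactor ratio in \eqref{eq:def-eps-evec} tends to $1$. The factor $\lambda_2/\lambda_1+\|\bx\|_\infty$ is $O(1)$. The remaining terms satisfy
\[
\frac{2B_3}{\lambda_1-\lambda_2}+\frac{B_3}{\lambda_1-B_3}=o(1),
\qquad
\frac{2B_1+2B_2\|\bx\|_\infty}{\lambda_1-\lambda_2-2B_3-4B_2}=o(1).
\]
Therefore $\epsilon_n=o(1)<c'/2\le\Delta_1^\lambda/2$ for large $n$. Corollary~\ref{cor:topk-consistency} then yields $S_1^\lambda=\widetilde S_1^\lambda$ with conditional probability $1-o(1)$. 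Averaging over the $1-o(1)$ event for $\Ab$ completes the argument.
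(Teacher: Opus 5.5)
Your proposal follows the paper's proof essentially step for step. You reduce to Corollary~\ref{cor:topk-consistency} with $k=1$, get $\lambda_1-\lambda_2=\Theta_p(n^{1/(2(b+2))})$ from the joint limit $\lambda_i/n^{1/(2(b+2))}\convd\sqrt{\nu_i}$ together with $\nu_1>\nu_2$ a.s.\ (Lemmas~\ref{lem:PA_limits} and~\ref{lem:no_ties_nu}, which settle exactly the step you flagged as the main obstacle), and verify $B_1+B_2+B_3=o(n^{1/(2(b+2))})$. One small slip: your claim $B_1=o(1)$ fails for $b\in(-1,0)$, since the hypothesis only gives $\beta_n\sqrt n=o(n^{-b/(2(b+2))})$; all that is needed is $B_1=o(n^{1/(2(b+2))})$, which does hold because $b>-1$.
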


\begin{rmk}
Theorem~\ref{thm:EC_consistency} is restricted to $k=1$. For $k\ge2$, the gap condition in Corollary~\ref{cor:topk-consistency} is typically much harder to verify in PA networks, since the principal eigenvector is localized around the leading hub and the lower-ranked entries are much less separated.
\end{rmk}

\begin{rmk}
Comparing the stability of eigenvector and degree centrality under PA models, we make two observations.
\begin{itemize}
    \item 
    Edge missingness, controlled by $\beta_n$, appears to have a stronger effect on the stability of eigenvector centrality than on degree centrality.
    \item 
    The admissible range of $\alpha_n$ for stable eigenvector-based ranking is strictly smaller than that for degree-based ranking.
\end{itemize}
\end{rmk}

\section{Numerical Results}
\label{sec:num.res}

\subsection{Simulation study for top-$k$ recovery under edge noise}
In this section, we assess the informativeness of the upper and lower bounds derived in Section~\ref{sec:set_deviations} for the set difference $d_H(S_k, \widetilde{S}_k)$ under various network models, by evaluating how well these bounds capture the empirical behavior of top-$k$ instability under noise.

\subsubsection{Erd\H{o}s--R\'enyi Networks}

Assuming the unobserved network $\Ab$ is generated from an ER graph $\mathcal{G}(n,p_n)$, we conduct simulation studies under three different settings to investigate how network size, density, noise levels, and the top-$k$ parameter affect the recovery of the top-$k$ set under edge noise. 

\begin{itemize}

\item \textbf{Setting 1 (Varying $n$).}
Number of nodes $n$ increases from 200 to 2000 over a discrete grid;
fixed parameters: $p = 0.25$, $k = 5$, $\alpha_n = 0.05$, $\beta_n = 0.05$.

\item \textbf{Setting 2 (Varying $\alpha_n$).}
False positive rate $\alpha_n \in \{0.01, 0.02, 0.03, 0.04, 0.05\}$;
fixed parameters: $n = 1000$, $p = 0.25$, $k = 5$, $\beta_n = 0.05$.

\item \textbf{Setting 3 (Noise scaling with $n$).}
To examine the scaling behavior predicted by Theorem~\ref{thm:ER_threshold},
we consider a regime in which the noise level decays logarithmically with the network size.
Specifically, we let $
n \in \{200, 300, 400, 500, 600, 800, 1000\},$
$ p = 0.25, k = 5$,
and set $\alpha_n = \frac{1}{2 \log n}, \beta_n = \frac{1}{2 \log n}.$

This setting probes the transition into the impossibility regime for exact top-$k$ recovery described in Theorem~\ref{thm:ER_threshold}.

\end{itemize}

\begin{figure}[tbp]
    \centering
    \includegraphics[width=\textwidth]{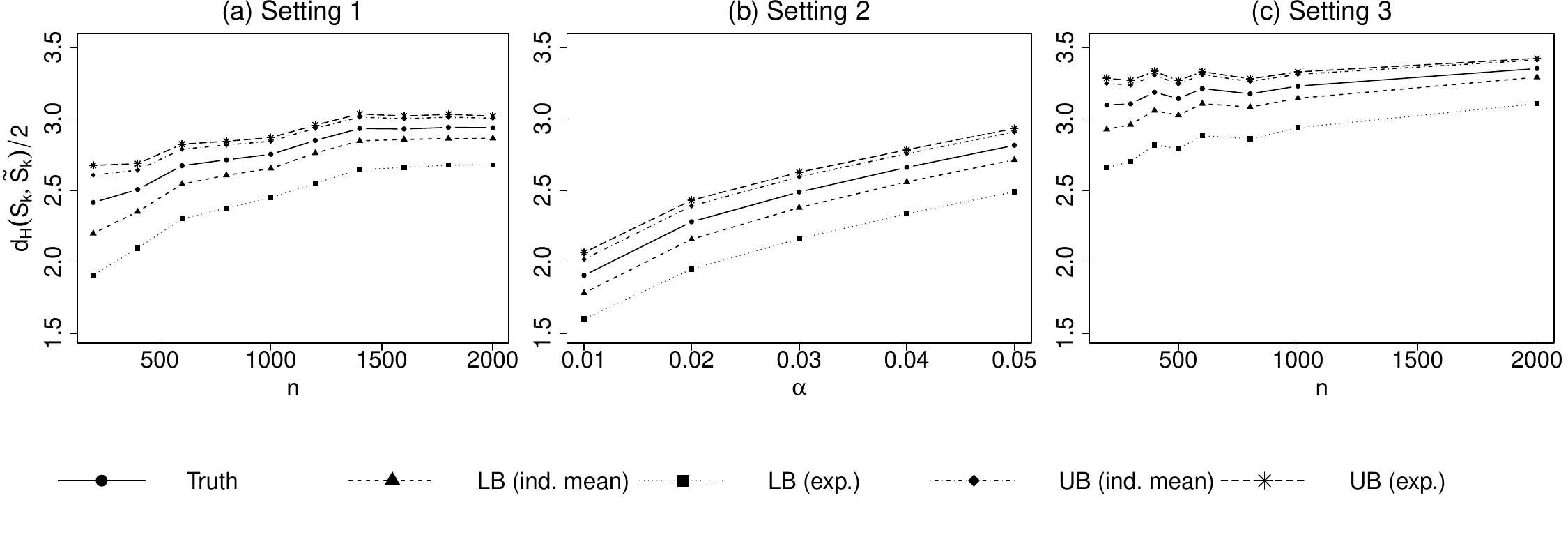}
    \caption{
    Empirical mean of $d_H(S_k,\widetilde{S}_k)/2$ and the corresponding lower and upper bounds under ER graphs. In each panel, the curves show the empirical mean of $d_H(S_k,\widetilde{S}_k)/2$ together with its lower and upper bounds.
    The three panels correspond to Settings 1--3.
    }
    \label{fig:er-results}
\end{figure}

Figure~\ref{fig:er-results} summarizes the results. In each panel, we plot the empirical mean of $d_H(S_k, \widetilde{S}_k)/2$, averaged over 100 realizations of the ER graph and, for each graph, 100 realizations of the noise process. The dashed lines correspond to the theoretical lower and upper bounds from Lemmas~\ref{lem:lower_bound} and~\ref{lem:upper_bound}. We consider two versions of these bounds: an indicator-based version and an expectation-based version. 

Across all three panels in Figure~\ref{fig:er-results}, the empirical set difference remains non-negligible, indicating that top-$k$ recovery under ER graphs is highly sensitive to edge noise. The upper and lower bounds provide informative approximations to the empirical behavior. Panel~(c) corresponds to a parameter regime that falls in the impossibility region for exact top-$k$ recovery under the ER model, as characterized by Theorem~\ref{thm:ER_threshold}.

\subsubsection{Preferential Attachment Networks}

We next investigate the stability of the top-$k$ set under PA networks.
Here, $m$ denotes the number of edges added at each step. We fix $b = 1$, corresponding to linear preferential attachment with attachment function $d(v) + b$.
\begin{itemize}
    \item \textbf{Setting 1 (Varying $n$):} 
    Number of nodes 
    $n \in \{200, 500, 1000, 2000, 5000\}$; 
    fixed parameters: $m = 5$, $k = 5$, $\alpha_n = 0.05$, $\beta_n = 0.05$.
    
    \item \textbf{Setting 2 (Varying $\alpha_n$):} 
    False positive rate 
    $\alpha_n \in \{0.01, 0.02, 0.03, 0.04, 0.05\}$; 
    fixed parameters: $n = 1000$, $m = 5$, $k = 5$, $\beta_n = 0.05$.

    \item \textbf{Setting 3 (Noise scaling with $n$):} 
       To mimic the consistency regime in Theorem~\ref{thm:PA_joint}, we let the false positive rate $\alpha_n$
    decay with $n$. Specifically, we take
    $n$ from 300 to 1500, fix $m = 5$ and $k = 5$, set $\beta_n = 0.05$, and consider
    \[
    \alpha_n =  n^{-\frac{b}{2+b}}(\log n)^{-2},
    \]
    with $b=1$, which places the PA model in the consistency regime characterized by Theorem~\ref{thm:PA_joint}.
\end{itemize}

\begin{figure}[tbp]
    \centering
    \includegraphics[width=\textwidth]{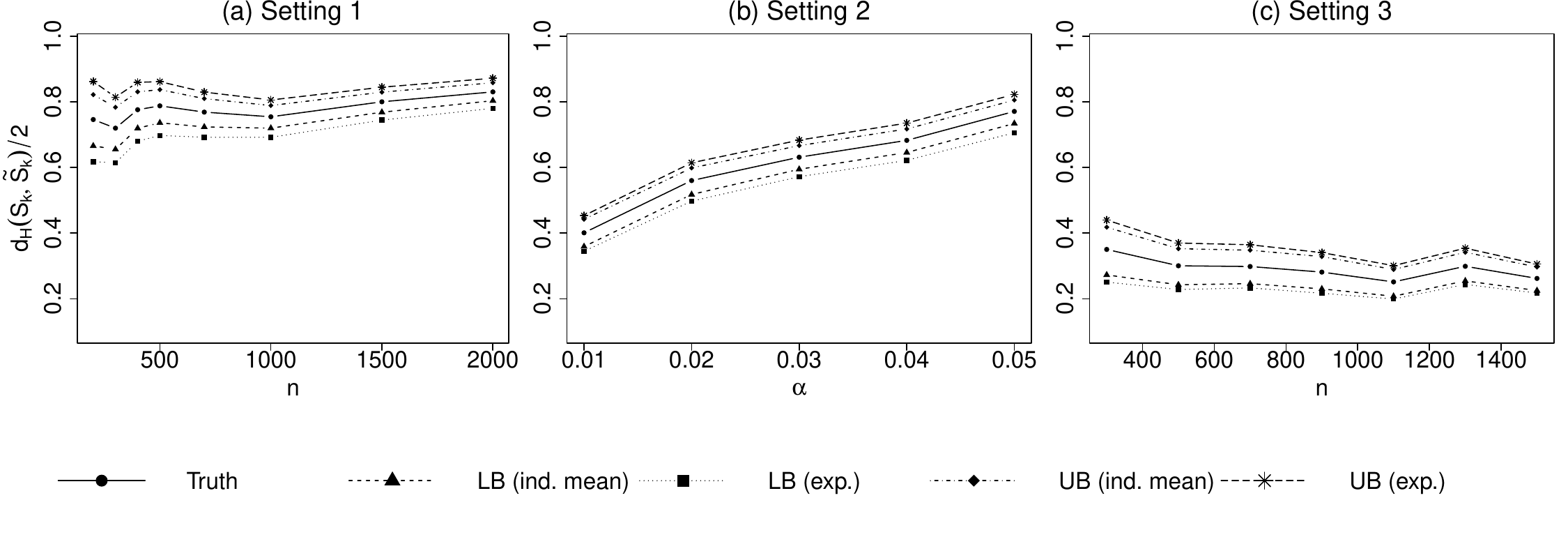}
    \caption{
    Empirical mean of $d_H(S_k,\widetilde S_k)/2$ and the corresponding lower and upper bounds under PA models.
    In each panel, the curves show the empirical mean of $d_H(S_k,\widetilde{S}_k)/2$ together with its lower and upper bounds.
    The three panels correspond to Settings 1--3.
    }
    \label{fig:PA_results}
\end{figure}

Figure~\ref{fig:PA_results} displays the corresponding results. As in the ER case, the solid line represents the empirical set difference, while the dashed lines denote the theoretical bounds. In contrast to ER graphs, PA networks exhibit substantially greater robustness to noise. Panel~(c) shows that when the parameters fall within the consistency regime, the set difference remains small and exhibits a mild decreasing trend, reflecting the stabilizing effect of the heavy-tailed degree distribution.

Overall, these results show that the proposed bounds capture the empirical behavior reasonably well and highlight a fundamental contrast between homogeneous and heavy-tailed network models in terms of top-$k$ stability.

\subsection{Localization of eigenvector centrality}

In this section, we empirically examine Assumptions~\ref{asm:localization} and
\ref{asm:hub-separation} in PA networks. These simulations are intended to assess whether the hub-dominated regime underlying
Theorem~\ref{thm:EC_consistency} is realized in practice.

For each network size \(n\), we generate \(B=200\) independent PA networks in which each new node attaches to \(m=1\) existing node, with the attachment probability proportional to \(\deg(i)+b\), where \(b=1\).
Let \(h\) denote the unique maximum-degree vertex, \(N(h)=\{i:i\sim h\}\) its neighbor set,
and \(\bx\) the \(\ell_2\)-normalized principal eigenvector of the adjacency matrix.

Figure~\ref{fig:gap-simulation} summarizes the results.
Panel~(a) shows that the hub entry \(x_h\) remains bounded away from zero as \(n\) increases.
Panel~(b) plots
\(
M_{\mathrm{out}}=\sum_{i\notin\{h\}\cup N(h)} x_i^2,
\) 
which decreases with \(n\), indicating increasing concentration of eigenvector mass on the hub and its immediate neighborhood.
Together, Panels~(a) and (b) provide empirical support for Assumption~\ref{asm:localization}.
Panel~(c) reports the separation statistic
\(
\Delta = x_h - \max_{i\in N(h)} x_i,
\)
which remains non-negligible across network sizes, supporting the hub separation condition in Assumption~\ref{asm:hub-separation}.

\begin{figure}[t]
  \centering
  \begin{subfigure}[t]{0.32\textwidth}
    \centering
    \includegraphics[width=\textwidth]{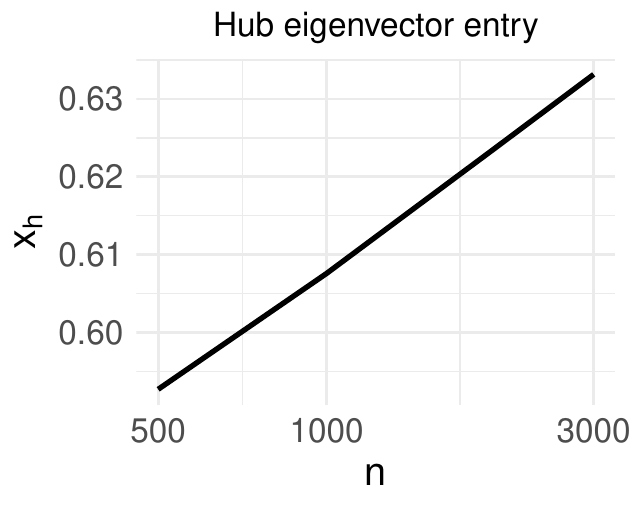}
    \caption{Hub eigenvector entry \(x_h\).}
    \label{fig:xh-m1}
  \end{subfigure}\hfill
  \begin{subfigure}[t]{0.32\textwidth}
    \centering
    \includegraphics[width=\textwidth]{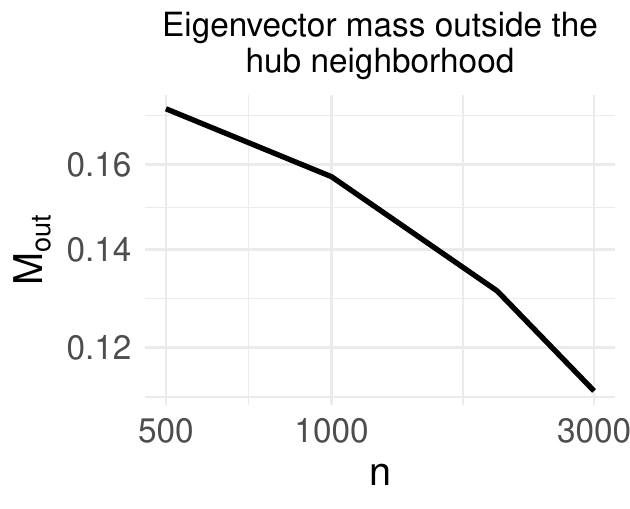}
    \caption{Mass outside hub neighborhood \(M_{\mathrm{out}}\).}
    \label{fig:mout-m1}
  \end{subfigure}\hfill
  \begin{subfigure}[t]{0.32\textwidth}
    \centering
    \includegraphics[width=\textwidth]{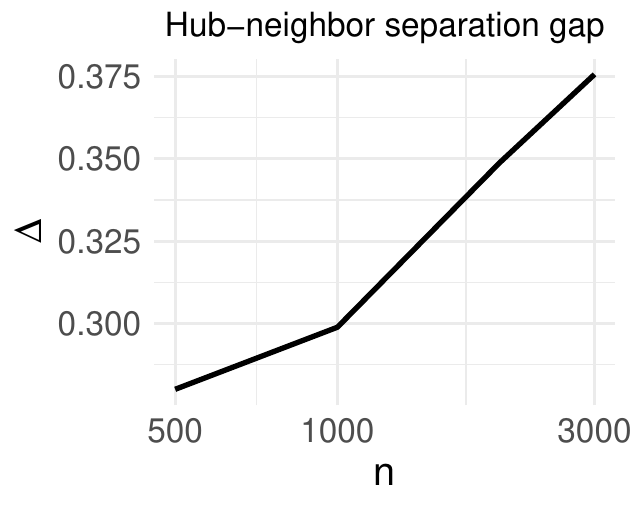}
    \caption{Hub--neighbor separation gap \(\Delta\).}
    \label{fig:delta-m1}
  \end{subfigure}

  \caption{Empirical diagnostics for eigenvector localization under preferential attachment with \(m=1\).}
  \label{fig:gap-simulation}
\end{figure}

\subsection{Comparing degree and eigenvector centrality under noise}
In this section, we empirically compare the robustness of degree centrality and eigenvector centrality under noisy observations of PA networks. Our focus is on how well each centrality measure preserves the top-$k$ node set under noise. To summarize performance, we use the Jaccard similarity between the true and noisy top-$k$ sets, which provides a normalized measure of overlap on the unit interval.

We generate PA networks with $n=1000$ nodes and attachment parameter $m=3$, and consider top-$k$ identification with $k=10$. For each noise level $(\alpha_n,\beta_n)$, results are averaged over 50 independently generated networks and 50 independent noise realizations per network. Figure~\ref{fig:topk_stability} shows that when both $\alpha_n$ and $\beta_n$ are small, the true and noisy top-$k$ sets exhibit similarly high overlap under both centrality measures. As the noise level increases, degree centrality becomes consistently more robust than eigenvector centrality, with the difference becoming more pronounced at moderate to large noise levels.

\begin{figure}[t]
    \centering
    \includegraphics[width=\textwidth]{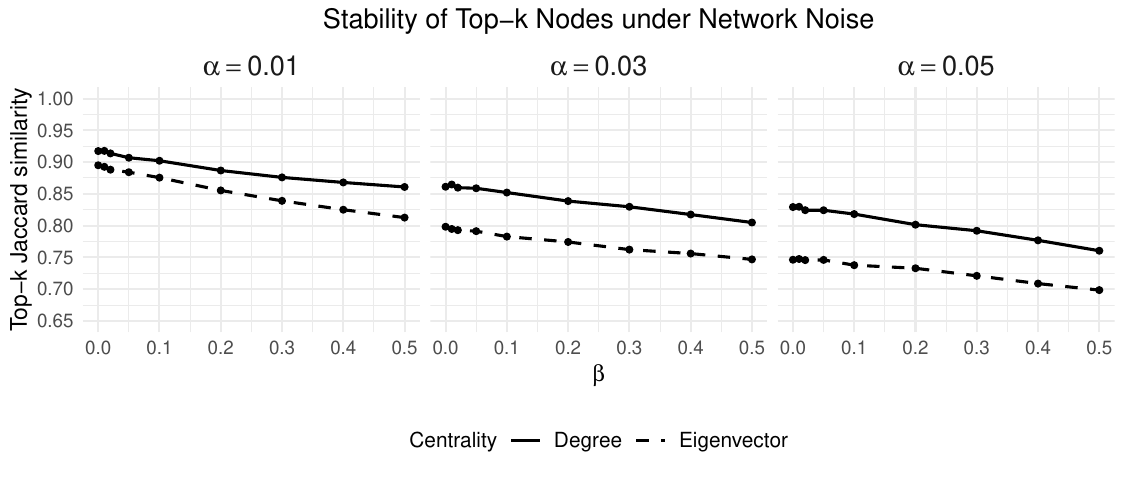}
    \caption{
    Stability of top-$k$ nodes under network noise.
    The y-axis shows the average Jaccard similarity between the true and noisy top-$k$ sets for degree and eigenvector centrality.
    }
    \label{fig:topk_stability}
\end{figure}

\section{Discussion and Conclusions}

In this paper, we develop a theoretical framework for understanding how network noise impacts centrality-based rankings. Focusing on degree-based rankings, we analyze how edge-level perturbations affect the recovery of the top-$k$ nodes and establish conditions under which reliable identification is possible or provably impossible. We also study eigenvector centrality, showing how edge noise propagates through spectral structure and affects ranking stability in a hub-dominated regime. 

For practical implementation of our bounds, estimation of the noise parameters $\alpha_n$ and $\beta_n$ is required; consistent estimators have been proposed and analyzed in \cite{chang2022estimation}. Our analysis focuses on edge-independent noise and degree-based summaries, and extending the framework to more complex dependence structures and alternative notions of centrality remains an important direction for future work. For example, distance-based centrality measures such as betweenness and closeness are of practical importance, but their analysis typically requires substantially stronger modeling assumptions. In particular, even a small number of missing edges can disconnect the graph or substantially alter shortest-path structure, making meaningful perturbation analysis fundamentally more delicate.

Several directions for future work remain. One natural extension is to develop inference procedures for quantifying uncertainty in top-$k$ rankings when multiple independent network observations are available, potentially building on latent-variable or likelihood-based approaches such as EM-type algorithms \citep{newman2018network}. It would also be interesting to explore connections with subsampling-based inference, differential privacy, and network mismeasurement models, where uncertainty arises from systematic rather than purely random noise.

\bibliography{bib/bibfile.bib} \bibliographystyle{asa}

\section*{Appendix}
\addcontentsline{toc}{section}{Appendix}

This appendix is organized as follows. 
Appendix~\ref{sec:appendix-main} contains the proofs of the main
theoretical results stated in Sections~\ref{sec:theory}–\ref{sec:EC}.  Appendix~\ref{sec:appendix-lemmas} collects technical lemmas and auxiliary concentration bounds used throughout the proofs. 

%%%%%%%%%%%%%%%%%%%%%%%%%%%%%%%%%%%%%%%%%%%%%%%%%%%%
\section{Proofs of Main Theorems}
\label{sec:appendix-main}
%%%%%%%%%%%%%%%%%%%%%%%%%%%%%%%%%%%%%%%%%%%%%%%%%%%%

\subsection{Degree–based results (Section~\ref{sec:theory})}

\subsubsection{Proof of Theorem \ref{thm:consistency_general}} 

\begin{proof}
Without loss of generality, assume $d_1 \ge d_2 \ge \cdots \ge d_n$. Let $i^*$ be the smallest index $>k$ satisfying the assumptions in the Theorem \ref{thm:consistency_general}. 
Write
\[
\cE_{\mathrm{bulk}} := \Big\{\max_{i\geq i^*}\tilde d_i \ge \min_{i\leq k} \tilde d_i\Big\},
\qquad
\cE_{\mathrm{bdry}} := \Big\{\max_{k< i < i^*}\tilde d_i \ge \min_{i\leq k} \tilde d_i\Big\}.
\]
Then
\[
\bbP(\widetilde S_k = S_k \mid \Ab) \;\ge\; 1 - \bbP(\cE_{\mathrm{bulk}}\mid \Ab) - \bbP(\cE_{\mathrm{bdry}}\mid \Ab).
\]
It suffices to show
\[
\bbP(\cE_{\mathrm{bulk}}\mid \Ab)\le \delta+o(1),
\qquad
\bbP(\cE_{\mathrm{bdry}}\mid \Ab)\le \delta.
\]
By Lemma~\ref{lem:quantile_bound}, with
\[
c_{n,i^*}^u
:= \mu_{n,i^*}
+\big(\sqrt{2\log(n-i^*+1)}-\epsilon_1(n-i^*+1)+\epsilon_2(n)\big)\sigma_{n,i^*},
\]
we have
\[
\bbP\!\Big(\max_{i\ge i^*}\tilde d_i \le c_{n,i^*}^u \,\Big|\,\Ab\Big)=1-o(1).
\]
Hence
\[
\bbP(\cE_{\mathrm{bulk}}\mid \Ab)
\le
\bbP\!\Big(\min_{i\le k}\tilde d_i\le c_{n,i^*}^u \,\Big|\,\Ab\Big)+o(1).
\]
By association and stochastic dominance,
\[
\bbP\!\Big(\min_{i\le k}\tilde d_i> c_{n,i^*}^u \,\Big|\,\Ab \Big)
\ge
\Big[\bbP\!\big(\tilde d_k>c_{n,i^*}^u \,\big|\,\Ab \big)\Big]^k.
\]
Bernstein’s inequality and assumption~(i) give
\[
\bbP\!\big(\tilde d_k\le c_{n,i^*}^u \,\big|\,\Ab\big)\le e^{-L_{\mathrm{bdry}}}=\delta/k,
\]
and therefore
\[
\bbP(\cE_{\mathrm{bulk}}\mid \Ab)\le \delta+o(1).
\]

\paragraph{Step 2: Control of indices $k < i < i^*$ under (ii).}
By the union bound and Bernstein’s inequality,
\[
\bbP(\cE_{\mathrm{bdry}}\mid \Ab)
\le
k(i^*-k)\exp\!\left(
-\,\frac{(\mu_{n,k}-\mu_{n,k+1})^2}{
2\bar{\sigma}_{\mathrm{bdry}}^2+\frac23(\mu_{n,k}-\mu_{n,k+1})
}
\right).
\]
Under assumption~(ii),
\[
\mu_{n,k}-\mu_{n,k+1}
\ge
\sqrt{2L_{\mathrm{bdry}}}\,\bar{\sigma}_{\mathrm{bdry}}
+\tfrac23 L_{\mathrm{bdry}},
\]
so the exponential term is at most \(e^{-L_{\mathrm{bdry}}}=\delta/[k(i^*-k)]\). Hence
\[
\bbP(\cE_{\mathrm{bdry}}\mid \Ab)\le \delta.
\]
\paragraph{Conclusion.}
Combining Steps~1–2,
\[
\bbP(\widetilde S_k = S_k \mid \Ab)
\;\ge\;
1 - \bbP(\cE_{\mathrm{bulk}}\mid \Ab) - \bbP(\cE_{\mathrm{bdry}}\mid \Ab)
\;\ge\;
1 - 2\delta - o(1),
\]
as claimed.
\end{proof}

\subsubsection{Proof of Theorem \ref{thm:infeasibility}}

\begin{proof}
Throughout the proof, probabilities are taken conditionally on the latent network $\Ab$.

Since the theorem establishes a lower bound on the supremum error probability over the degree classes
\[
\mathcal D^{\mathrm{bulk}}_{k,i^*}(\delta_k^{\mathrm{bulk}})
\cup
\mathcal D^{\mathrm{bdry}}_{k}(\delta_k^{\mathrm{bdry}}),
\]
it suffices to construct, in each regime, a particular degree vector in the corresponding class for which the top-$k$ recovery error is bounded away from zero. We therefore work with least-favorable configurations in each regime.

Accordingly, the proof proceeds separately for the bulk-separation and top-gap regimes.

\paragraph{bulk-separation regime.}  We start with the bulk-separation regime. We construct a homogeneous configuration such that
\[
d_1 = \cdots = d_k, 
\qquad 
d_{k+1} = \cdots = d_n,
\]
with
\[
d_k - d_{i^*} = d_k - d_{k+1} \le \delta_k^{\mathrm{bulk}}.
\]
Then $\bd \in \mathcal D^{\mathrm{bulk}}_{k,i^*}(\delta_k^{\mathrm{bulk}})$. Under this construction, the top-$k$ nodes share identical parameters $\mu_{n,k}$ and $\sigma_{n,k}$.

Let $c$ be a threshold to be specified below. The probability of incorrect top-$k$ recovery satisfies
\begin{align*}
    \bbP(S_k \neq \widetilde{S}_k)
    &= \bbP\!\left(\min_{i\le k} \tilde{d}_i < \max_{i\ge k+1} \tilde{d}_i \right) 
    \ge \bbP\!\left(\min_{i\le k} \tilde{d}_i < \max_{i\ge i^*} \tilde{d}_i \right) \\
    &\ge \bbP\!\left(\min_{i\le k} \tilde{d}_i < c,\; \max_{i\ge i^*} \tilde{d}_i \ge c \right) \\
    &\ge \bbP\!\left(\min_{i\le k} \tilde{d}_i < c \right) + \bbP\!\left(\max_{i\ge i^*} \tilde{d}_i \ge c \right) - 1.
\end{align*}
We first control the upper tail of $\max_{i\ge i^*}\tilde d_i$ by choosing
\[
c = c_{n,i^*}^{\,l} := 
\mu_{n,i^*} + \Big(\sqrt{2\log(n-i^*+1)} - \epsilon_1(n-i^*+1) - \epsilon_2(n)\Big)\sigma_{n,i^*},
\]
so that by Lemma~\ref{lem:quantile_bound}, under $\sigma_{n,i^*}^2 \to \infty$,
\[
\bbP\!\left(\max_{i\ge i^*} \tilde{d}_i \ge c\right) \ge 1 - o(1).
\]

Next, we bound the lower tail of $\min_{i\le k}\tilde d_i$. If
\(\log^7(kn) = o(\min_{1\le i\le k}\sigma_{n,i}^2)\),
then by Lemma~\ref{lem:min_CLT},
\[
\bbP\!\left(\min_{i\le k} \tilde{d}_i < c \right)
\ge \bbP\!\left(\min_{i\le k} z_i < \frac{c - \mu_{n,k}}{\sigma_{n,k}}\right)
- r_n,
\]
where $r_n \to 0$.

We now analyze the Gaussian minimum. Let
\[
t := \frac{c - \mu_{n,k}}{\sigma_{n,k}}.
\]
Then
\[
\bbP\!\left(\min_{i\le k} z_i < t \right)
= 1 - \bbP(z_1 \ge t)^k
= 1 - \big(1 - \Phi(t)\big)^k.
\]

Under the conditions of Theorem~\ref{thm:infeasibility},
\begin{align*}
t
&= \frac{1}{\sigma_{n,k}}\Big[\mu_{n,i^*}
+ \Big(\sqrt{2\log(n-i^*+1)} - \epsilon_1(n-i^*+1) - \epsilon_2(n)\Big)\sigma_{n,i^*}
- \mu_{n,k}\Big] \\
&= -\frac{1}{\sigma_{n,k}}\Big[
(1 - \alpha_n - \beta_n)\,\Delta_k^{\operatorname{bulk}}
- \big(\sqrt{2\log(n-i^*+1)} - \epsilon_1(n-i^*+1) - \epsilon_2(n)\big)\sigma_{n,i^*}
\Big] \ge 0.
\end{align*}
Hence $\Phi(t) \ge \tfrac{1}{2}$, so
\[
\bbP\!\left(\min_{i\le k} z_i < t \right)
= 1 - (1 - \Phi(t))^k
\ge 1 - 2^{-k}
\ge \frac{1}{2}, \qquad \text{for all } k \ge 1.
\]

Combining the bounds, we obtain
\[
\bbP(S_k \neq \widetilde{S}_k)
\ge \bbP\!\left(\min_{i\le k} \tilde{d}_i < c\right)
+ \bbP\!\left(\max_{i\ge i^*} \tilde{d}_i \ge c\right) - 1
\ge \frac{1}{2} - o(1).
\]

This establishes the desired lower bound in the bulk-separation regime.

\paragraph{Top-gap regime.}
We next consider the top-gap regime. Since the theorem is stated as a lower bound on the supremum error probability over
\(
\mathcal D_k^{\mathrm{bdry}}(\delta_k^{\mathrm{bdry}})
\),
it suffices to construct a particular degree vector in this class for which the top-$k$ recovery error is bounded away from zero.

We choose a least-favorable homogeneous configuration such that
\[
d_1=\cdots=d_k > d_{k+1}=\cdots=d_{i^*},
\]
with
\[
d_k-d_{k+1}\le \delta_k^{\mathrm{bdry}}.
\]
Then \(\bd\in \mathcal D_k^{\mathrm{bdry}}(\delta_k^{\mathrm{bdry}})\). Under this construction, the top-$k$ nodes share identical parameters \(\mu_{n,k}\) and \(\sigma_{n,k}\), while the competing block \(\{k+1,\ldots,i^*-1\}\) shares identical parameters \(\mu_{n,k+1}\) and \(\sigma_{n,k+1}\).

Let
\[
t:=\frac{\mu_{n,k}+\mu_{n,k+1}}{2}.
\]
Then
\[
\bbP(S_k\neq \widetilde S_k)
\ge
\bbP\!\left(\min_{i\le k}\tilde d_i \le \max_{k<j<i^*}\tilde d_j\right).
\]

We distinguish two cases.

\smallskip
\noindent\textit{Case 1: \(k\) and \(i^*-k\) are fixed.}

Since \(i^*-1\) is fixed, Lemma~\ref{lem:min_CLT} implies that
\[
(\tilde d_1^s,\ldots,\tilde d_{i^*-1}^s)\convd \cN(\bzero,\bI_{i^*-1}).
\]
Therefore,
\[
\bbP\!\left(\min_{i\le k}\tilde d_i \le \max_{k<j<i^*}\tilde d_j\right)
=
\bbP\!\left(
\min_{i\le k}(\sigma_{n,k}z_i+\mu_{n,k})
\le
\max_{k<j<i^*}(\sigma_{n,k+1}z_j+\mu_{n,k+1})
\right)+o(1),
\]
where \((z_1,\ldots,z_{i^*-1})\sim \cN(\bzero,\bI_{i^*-1})\).

Hence
\begin{align*}
\bbP(S_k\neq \widetilde S_k)
&\ge
\bbP\!\left(
\min_{i\le k}(\sigma_{n,k}z_i+\mu_{n,k})
\le
\max_{k<j<i^*}(\sigma_{n,k+1}z_j+\mu_{n,k+1})
\right)+o(1) \\
&\ge
\bbP\!\left(
\min_{i\le k}(\sigma_{n,k}z_i+\mu_{n,k}) \le t
\le
\max_{k<j<i^*}(\sigma_{n,k+1}z_j+\mu_{n,k+1})
\right)+o(1).
\end{align*}
By independence of the Gaussian coordinates, this equals
\begin{align*}
&\bbP\!\left(\min_{i\le k}(\sigma_{n,k}z_i+\mu_{n,k}) \le t\right)
\cdot
\bbP\!\left(\max_{k<j<i^*}(\sigma_{n,k+1}z_j+\mu_{n,k+1}) \ge t\right)
+o(1).
\end{align*}

For the first factor, let
\[
x_1:=\frac{t-\mu_{n,k}}{\sigma_{n,k}}
=
-\frac{\mu_{n,k}-\mu_{n,k+1}}{2\sigma_{n,k}}.
\]
Since
\[
\mu_{n,k}-\mu_{n,k+1}
=
(1-\alpha_n-\beta_n)\Delta_k^{\mathrm{bdry}},
\]
and
\[
\Delta_k^{\mathrm{bdry}}
\le
\frac{\eta}{1-\alpha_n-\beta_n}
\min\left\{
2\sqrt{2\log k}\,\sigma_{n,k},
\,
2\sqrt{2\log(i^*-k)}\,\sigma_{n,k+1}
\right\},
\]
for some \(\eta\in(0,1)\), we have
\[
x_1\ge -\eta\sqrt{2\log k}.
\]
Therefore,
\[
\bbP\!\left(\min_{i\le k}(\sigma_{n,k}z_i+\mu_{n,k}) \le t\right)
=
1-\bigl(1-\Phi(x_1)\bigr)^k.
\]
Since \(k\) is fixed, this is bounded below by a positive constant \(a_1>0\).

Similarly, let
\[
x_2:=\frac{t-\mu_{n,k+1}}{\sigma_{n,k+1}}
=
\frac{\mu_{n,k}-\mu_{n,k+1}}{2\sigma_{n,k+1}}
\le
\eta\sqrt{2\log(i^*-k)}.
\]
Then
\[
\bbP\!\left(\max_{k<j<i^*}(\sigma_{n,k+1}z_j+\mu_{n,k+1}) \ge t\right)
=
1-\Phi(x_2)^{\,i^*-k-1}.
\]
Since \(i^*-k\) is fixed, this is also bounded below by a positive constant \(a_2>0\).

Combining the two bounds yields
\[
\bbP(S_k\neq \widetilde S_k)\ge a_1a_2-o(1).
\]
Hence there exists a constant \(c_2>0\) such that
\[
\bbP(S_k\neq \widetilde S_k)\ge c_2
\]
for all sufficiently large \(n\). 

\smallskip
\noindent\textit{Case 2: \(i^*-k\to\infty\).}
In this case,
\begin{align*}
\bbP(S_k\neq \widetilde S_k)
&\ge
\bbP\!\left(\min_{i\le k}\tilde d_i \le \max_{k<j<i^*}\tilde d_j\right) \\
&\ge
\bbP\!\left(\min_{i\le k}\tilde d_i \le t,\ \max_{k<j<i^*}\tilde d_j \ge t\right) \\
&\ge
\bbP\!\left(\min_{i\le k}\tilde d_i \le t\right)
+
\bbP\!\left(\max_{k<j<i^*}\tilde d_j \ge t\right)-1.
\end{align*}

For the first term, by Lemma~\ref{lem:min_CLT},
\[
\bbP\!\left(\min_{i\le k}\tilde d_i \le t\right)
\ge
\bbP\!\left(\min_{i\le k} z_i \le x_1\right)-r_{n,1},
\]
where \(r_{n,1}\to 0\),
\[
x_1:=\frac{t-\mu_{n,k}}{\sigma_{n,k}}
=
-\frac{\mu_{n,k}-\mu_{n,k+1}}{2\sigma_{n,k}},
\]
and \((z_1,\ldots,z_k)\sim \cN(\bzero,\bI_k)\).

Recall
\[
\mu_{n,k}-\mu_{n,k+1}
=
(1-\alpha_n-\beta_n)\Delta_k^{\mathrm{bdry}},
\]
and
\[
\Delta_k^{\mathrm{bdry}}
\le
\frac{\eta}{1-\alpha_n-\beta_n}
\min\left\{
2\sqrt{2\log k}\,\sigma_{n,k},\;
2\sqrt{2\log(i^*-k)}\,\sigma_{n,k+1}
\right\},
\]

Since \(x_1 \ge -\eta\sqrt{2\log k}\) with \(\eta\in(0,1)\), we have
\[
\Phi(x_1)\ge \Phi\!\left(-\eta\sqrt{2\log k}\right).
\]
By the standard Gaussian lower-tail bound, for some constant \(C>0\),
\[
\Phi\!\left(-\eta\sqrt{2\log k}\right)\ge \frac{C}{k^{\eta^2}\sqrt{\log k}}
\]
for all sufficiently large \(k\). Hence
\[
k\,\Phi(x_1)\to\infty.
\]
Therefore,
\[
\bbP\!\left(\min_{i\le k} z_i \le x_1\right)
=1-\bigl(1-\Phi(x_1)\bigr)^k
\ge 1-\exp\{-k\Phi(x_1)\}\to 1.
\]
In particular, there exists a constant \(a_1>0\) such that
\[
\bbP\!\left(\min_{i\le k} z_i \le x_1\right)\ge a_1
\]
for all sufficiently large \(n\).
For the second term, let
\[
x_2:=\frac{t-\mu_{n,k+1}}{\sigma_{n,k+1}}
=
\frac{\mu_{n,k}-\mu_{n,k+1}}{2\sigma_{n,k+1}}
\le \eta\sqrt{2\log(i^*-k)}.
\]
Writing \(m:=i^*-k-1\), we have
\[
\bbP\!\left(\max_{k<j<i^*}\tilde d_j \ge t\right)
\ge
\bbP\!\left(\max_{1\le j\le m} z_j \ge x_2\right)-r_{n,2},
\]
where \(r_{n,2}\to 0\). Since
\[
\bbP\!\left(\max_{1\le j\le m} z_j \ge x_2\right)
=
1-\Phi(x_2)^m,
\]
and \(x_2\le \eta\sqrt{2\log(i^*-k)}\) with \(\eta\in(0,1)\), it follows from the Gaussian tail bound that
\[
\Phi(x_2)^m\to 0.
\]
Hence
\[
\bbP\!\left(\max_{k<j<i^*}\tilde d_j \ge t\right)\ge 1-o(1).
\]
Combining the two bounds gives
\[
\bbP(S_k\neq \widetilde S_k)
\ge a_1-o(1).
\]
Hence there exists a constant \(c_3>0\) such that
\[
\bbP(S_k\neq \widetilde S_k)\ge c_3
\]
for all sufficiently large \(n\). Therefore, the desired lower bound holds in the top-gap regime in both cases.

\end{proof}

\subsubsection{Proof of Theorem \ref{thm:PA_joint}}
\begin{proof}
Without loss of generality, write \(d_1 \ge d_2 \ge \cdots \ge d_n\).
Fix \(b>-1\) and \(k\). By Lemma~\ref{lem:PA_limits}, for every fixed \(J\ge1\),
\[
n^{-1/(2+b)}(d_1,\ldots,d_{k+J})\convd (\nu_1,\ldots,\nu_{k+J}),
\]
where \(\nu_1\ge\cdots\ge\nu_{k+J}>0\) a.s. Moreover, Lemma~\ref{lem:no_ties_nu} gives
\(\mathbb P(\nu_j=\nu_{j+1})=0\) for every fixed \(j\ge1\). Hence
\[
\nu_k-\nu_{k+1}>0 \quad\text{a.s.}, \qquad \nu_k-\nu_{k+J}>0 \quad\text{a.s.}
\]
for every fixed \(J\ge1\).

Thus, it remains to verify the conditions of Theorem~\ref{thm:consistency_general}. Specifically, we show that the relevant degree gaps are asymptotically of order \(n^{1/(2+b)}\), whereas the boundary and bulk fluctuation thresholds are \(o_p(n^{1/(2+b)})\).

Under independent edge flips,
\[
\sigma_{n,i}^2\le n\alpha_n(1-\alpha_n)+d_i\beta_n(1-\beta_n).
\]
Since \(d_i=\Theta_p(n^{1/(2+b)})\) for each fixed \(i\), it follows that
\(\sigma_{n,k}^2+\sigma_{n,k+1}^2 \le 2n\alpha_n + O_p(n^{1/(2+b)}\beta_n)\).

Now fix \(J\ge1\), let \(i^*:=k+J\), and set
\(L_{\mathrm{bdry}}=\log(kJ/\delta_n)\) and \(L_{\mathrm{bdry}}=\log(k/\delta_n)\).
Take \(\delta_n=1/\log n\). Then \(L_{\mathrm{bdry}}=O(\log\log n)\) and \(L_{\mathrm{bdry}}=O(\log\log n)\). 
Define
\[
y_{\mathrm{bdry}}(n):=
\frac{1}{(1-\alpha_n-\beta_n)\,n^{1/(2+b)}}
\Big[
\sqrt{2L_{\mathrm{bdry}}}\,\sqrt{\sigma_{n,k}^2+\sigma_{n,k+1}^2}
+\frac{2}{3}L_{\mathrm{bdry}}
\Big],
\]
and
\[
y_{\mathrm{bulk}}(n):=
\frac{1}{(1-\alpha_n-\beta_n)\,n^{1/(2+b)}}
\Big[
\big(\sqrt{2\log(n-i^*+1)}-\epsilon_1(n)-\epsilon_2(n)\big)\sigma_{n,i^*}
+\sigma_{n,k}\sqrt{2L_{\mathrm{bdry}}}
+\frac{2}{3}L_{\mathrm{bdry}}
\Big].
\]

Assume $
\alpha_n=r_n\,\frac{n^{-b/(2+b)}}{\log n}, r_n\to0,$ 
and $1-\alpha_n-\beta_n\ge c_0>0$. 

Using
\[
\sigma_{n,k}^2+\sigma_{n,k+1}^2
\le 2n\alpha_n+(d_k+d_{k+1})\beta_n
\]
and $d_k=\Theta_p(n^{1/(2+b)})$,
\[
\frac{\sqrt{2L_{\mathrm{bdry}}}\,\sqrt{\sigma_{n,k}^2+\sigma_{n,k+1}^2}}{n^{1/(2+b)}}
=
O_p\!\Big(
\sqrt{L_{\mathrm{bdry}}}\,\frac{\sqrt{n\alpha_n}}{n^{1/(2+b)}}
\Big)
+
O_p\!\Big(
\sqrt{L_{\mathrm{bdry}}}\,\sqrt{\beta_n}\,n^{-1/(4+2b)}
\Big).
\]
Since
\[
\frac{\sqrt{n\alpha_n}}{n^{1/(2+b)}}=\frac{\sqrt{r_n}}{\sqrt{\log n}}\to0,
\qquad
\sqrt{\beta_n}\,n^{-1/(4+2b)}\to0,
\]
and \(L_{\mathrm{bdry}}=O(\log\log n)\), the first term in \(y_{\mathrm{bdry}}(n)\) is \(o_p(1)\); moreover,
\(L_{\mathrm{bdry}}/n^{1/(2+b)}=o(1)\). Hence \(y_{\mathrm{bdry}}(n)=o_p(1)\).

Also,
\[
\sigma_{n,i^*}
=
O_p\left(\sqrt{n \alpha_n}\right)
+
O_p\!\big(n^{1/(4+2b)}\sqrt{\beta_n}\big).
\]
Therefore,
\[
\frac{\big(\sqrt{2\log(n-i^*+1)}-\epsilon_1(n)-\epsilon_2(n)\big)\sigma_{n,i^*}}{n^{1/(2+b)}}
=
O_p\!\Big(
\sqrt{\log n}\,\frac{\sqrt{n\alpha_n}}{n^{1/(2+b)}}
\Big)
+
O_p\!\Big(
\sqrt{\log n}\,n^{-1/(4+2b)}\sqrt{\beta_n}
\Big)
=o_p(1).
\]
Similarly,
\[
\frac{\sigma_{n,k}\sqrt{2L_{\mathrm{bdry}}}}{n^{1/(2+b)}}
=
O_p\!\Big(
\sqrt{\log n}\,\frac{\sqrt{n\alpha_n}}{n^{1/(2+b)}}
\Big)
+o_p(1)
=o_p(1),
\]
and
\[
\frac{L_{\mathrm{bdry}}}{n^{1/(2+b)}}=o(1).
\]
Thus
\[
y_{\mathrm{bulk}}(n)=o_p(1).
\]

We have shown
\[
y_{\mathrm{bdry}}(n)=o_p(1),\qquad y_{\mathrm{bulk}}(n)=o_p(1).
\]
Let \(\mathcal E_n\) denote the event that
\[
d_k-d_{k+1}>(1-\alpha_n-\beta_n)^{-1}n^{1/(2+b)}y_{\mathrm{bdry}}(n)
\quad\text{and}\quad
d_k-d_{k+J}>(1-\alpha_n-\beta_n)^{-1}n^{1/(2+b)}y_{\mathrm{bulk}}(n).
\]
By the preceding convergence results, \(\mathbb P(\mathcal E_n)\to 1\).
On \(\mathcal E_n\), the assumptions of Theorem~\ref{thm:consistency_general} are satisfied. Therefore, by Theorem~\ref{thm:consistency_general},
\[
\bbP(\widetilde S_k=S_k\mid \Ab)\ge 1-\delta_n-o(1)=1-o(1).
\]
Taking expectations over \(\Ab\), we obtain
\[
\bbP(\widetilde S_k=S_k)=1-o(1).
\]
\end{proof}

\begin{lem}\label{lem:PA_limits}[From \citet{mori2005maximum, bhamidi2012spectra}]
Consider the linear preferential attachment model with linear attachment function given in \eqref{eqn:PA_prob}. 
Let $\lambda_1 \ge \lambda_2 \ge \cdots \ge \lambda_k$ denote the $k$ largest eigenvalues of the adjacency matrix, 
and let $d_1 \ge d_2 \ge \cdots \ge d_k$ denote the $k$ largest degrees. 
For any fixed $k$, there exist non-degenerate random variables 
$\nu_1 \ge \nu_2 \ge \cdots \ge \nu_k > 0$ such that
\[
\left( \frac{d_{1}}{n^{1/(b+2)}}, \frac{d_{2}}{n^{1/(b+2)}}, \ldots, \frac{d_{k}}{n^{1/(b+2)}} \right)
\convd
\left(\nu_1, \nu_2, \ldots, \nu_k \right),
\]
and
\[
\left( \frac{\lambda_1}{n^{1/(2(b+2))}}, \frac{\lambda_2}{n^{1/(2(b+2))}}, \ldots,
\frac{\lambda_k}{n^{1/(2(b+2))}} \right)
\convd
\left(\sqrt{\nu_1}, \sqrt{\nu_2}, \ldots, \sqrt{\nu_k}\right),
\]
as $n \to \infty$, where \(b\) is the linear attachment parameter in \eqref{eqn:PA_prob}.  

\end{lem}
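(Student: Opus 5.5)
The proof has two parts: the degree statement via Móri's martingale method, and the eigenvalue statement via a "union of stars plus small remainder" decomposition in the spirit of \citet{bhamidi2012spectra}. We work in the tree case (one edge per new vertex), which is the setting of \eqref{eqn:PA_prob}.

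\emph{Step 1: degree limits.} Fix a vertex $v$ and write $S_t=\sum_{u\in V_t}(\deg(u)+b)=(2+b)t+O(1)$. One has
\[
\bbE[\deg_v(t+1)+b\mid G_t]=(\deg_v(t)+b)\Bigl(1+\tfrac{1}{S_t}\Bigr).
\]
Setting $c_t=\prod_{s<t}(1+1/S_s)^{-1}\asymp t^{-1/(2+b)}$, the process $M_v(t)=c_t(\deg_v(t)+b)$ is a positive martingale. It is bounded in $L^p$ for every $p$, by the standard Gamma-ratio computation of its moments, so it converges a.s. and in $L^p$ to some $\xi_v>0$. The same moment computation gives $\bbE[\sup_t M_v(t)^p]\lesssim v^{-p/(2+b)}$.

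\emph{Step 2: the top degrees come from early vertices.} Fix $\varepsilon>0$ and choose $p>2+b$. Doob's maximal inequality together with a union bound over $v>V$ give
\[
\bbP\Bigl(\max_{v>V}\sup_t M_v(t)>\varepsilon\Bigr)\lesssim \varepsilon^{-p}\sum_{v>V}v^{-p/(2+b)}\;\longrightarrow\;0 \quad\text{as } V\to\infty.
\]
Hence, with high probability, the $k$ largest normalized degrees $d_{(i)}/n^{1/(2+b)}$ are attained among the first $V$ vertices, up to an error $\varepsilon$. For fixed $V$ the vector $(\xi_1,\dots,\xi_V)$ is the a.s. limit of the corresponding normalized degrees, and the map to its top-$k$ order statistics is continuous. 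Letting $V\to\infty$ and then $\varepsilon\to0$, and using $\sup_{v>V}\xi_v\to0$, yields
\[
n^{-1/(2+b)}(d_1,\dots,d_k)\;\to\;(\nu_1,\dots,\nu_k)=\text{top-}k\text{ order statistics of }\{\xi_v\},
\]
with a.s. convergence along the natural coupling, hence in distribution. The limits satisfy $\nu_k>0$ because $\xi_v>0$, and they are nondegenerate because each $\xi_v$ has a density (its moments identify a Gamma-type law).

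\emph{Step 3: eigenvalues.} Fix $K\ge k$ and let $H$ be the set of the $K$ highest-degree vertices, which by Step 2 are early vertices. Decompose
\[
\Ab=\Ab_{\mathrm{star}}+\Ab_{H}+\Ab_{\mathrm{rest}},
\]
where:
\begin{itemize}
\item $\Ab_{\mathrm{star}}$ consists of the edges from each $h\in H$ to its neighbours outside $H$;
\item $\Ab_H$ consists of the edges inside $H$;
\item $\Ab_{\mathrm{rest}}$ consists of all other edges.
\end{itemize}
Each piece is controlled as follows.
\begin{itemize}
\item $\Ab_H$ has at most $\binom K2$ edges, so $\|\Ab_H\|=O(1)$.
\item $\Ab_{\mathrm{rest}}$ is a forest with maximum degree $\Delta'\le \max(d_{K+1},\,K)$. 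For any graph of maximum degree $\Delta$ that is a forest, $\|\Ab\|\le 2\sqrt{\Delta}$, so $\|\Ab_{\mathrm{rest}}\|\le 2\sqrt{d_{K+1}}+O(1)$. We make $d_{K+1}/n^{1/(2+b)}$ small by taking $K$ large, using $\nu_K\to0$ from Step 2.
\item For $\Ab_{\mathrm{star}}$, a neighbour shared by two hubs would create overlapping stars. In a forest two vertices share at most one common neighbour, so only $O(K^2)$ such vertices exist. Deleting the corresponding edges is again an $O(1)$-norm perturbation. After this deletion, $\Ab_{\mathrm{star}}$ is a disjoint union of stars, whose nonzero eigenvalues are $\pm\sqrt{d_h+O(K)}$.
\end{itemize}
Weyl's inequality then gives
\[
|\lambda_i-\sqrt{d_i}|\le 2\sqrt{d_{K+1}}+O(1), \qquad i\le k.
\]
Dividing by $n^{1/(2(2+b))}$ and letting $n\to\infty$, then $K\to\infty$, gives joint convergence of the normalized top-$k$ eigenvalues to $(\sqrt{\nu_1},\dots,\sqrt{\nu_k})$, jointly with the degrees.

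The main obstacle is the uniform tail control in Step 2, namely the bound $\bbE[\sup_t M_v(t)^p]\lesssim v^{-p/(2+b)}$ that is summable in $v$. This is what prevents late vertices from reaching the top. It also supplies the smallness of $d_{K+1}$ used in Step 3. I would establish it first via the explicit Gamma-function product formula for $\bbE[(\deg_v(t)+b)^{\overline{p}}]$.
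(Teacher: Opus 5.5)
Your argument is correct in substance. The paper gives no proof of this lemma; it imports it from \citet{mori2005maximum} and \citet{bhamidi2012spectra}. So what you have written is a self-contained reconstruction of the cited results, and the useful comparison is with the representation of the $\nu_j$ that the paper relies on afterwards.

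\textbf{Degrees.} You follow Móri's discrete-time route:
\begin{itemize}
\item the positive martingales $c_t(\deg_v(t)+b)$ with rising-factorial moment bounds, giving $\bbE[\sup_t M_v(t)^p]\lesssim v^{-p/(2+b)}$;
\item summability of this bound for $p>2+b$;
\item $\nu_i$ identified as the top order statistics of the vertex limits $\xi_v$, with almost sure convergence on the natural coupling.
\end{itemize}
The paper instead uses the continuous-time branching-process embedding of \citet{bhamidi2012spectra}. There, with probability close to one, $(\nu_1,\dots,\nu_{j+1})$ coincides with the top order statistics of marks on the vertices of $\mathcal F(S)$. These marks are conditionally independent and atomless given $\mathcal F(S)$, being multiples of $W_a\sim\mathrm{Gamma}(1+a,1)$. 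That structure is what Lemma~\ref{lem:no_ties_nu} uses to get $\nu_j>\nu_{j+1}$ a.s., which Theorem~\ref{thm:PA_joint} needs. Your description of $\nu_i$ through the dependent family $\{\xi_v\}$ gives the lemma as stated and is more elementary. It does not by itself deliver the no-ties property.

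\textbf{Eigenvalues.} Your step is the standard star-forest comparison behind the cited result, and it is carried out correctly. You obtain the deterministic bound $|\lambda_i-\sqrt{d_i}|\le 2\sqrt{d_{K+1}}+C_K$ from three ingredients:
\begin{itemize}
\item Weyl's inequality;
\item the bound $\|\Ab\|\le 2\sqrt{\Delta}$ for forests;
\item the observation that two hubs in a tree share at most one neighbour.
\end{itemize}
Together with $\nu_K\to0$, this gives the joint limit.

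\textbf{Two small points to tighten.}

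First, you assert $\xi_v>0$ but do not establish it. Martingale convergence only gives some limit, which could have an atom at $0$. Calling the law ``Gamma-type'' is not accurate: its $p$-th moment is a ratio of Gamma functions in $p$, not a Gamma moment sequence. Moment determinacy alone does not rule out an atom at $0$. A clean fix within your framework: $\Gamma(\deg_v(t)+b+p)/\Gamma(\deg_v(t)+b)$, normalized by $\prod_s(1+p/S_s)$, is a martingale for every real $p>-(1+b)$. Taking $p\in(-(1+b),0)$ gives $\sup_t\bbE[M_v(t)^{p}]<\infty$. Fatou then gives $\bbE[\xi_v^{p}]<\infty$, hence $\xi_v>0$ a.s., and so $\nu_k>0$.

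Second, nondegeneracy of $\nu_i$ for $i\ge2$ does not follow merely from each $\xi_v$ having a density, because the $\xi_v$ are dependent. For $i=1$ it follows from $\nu_1\ge\xi_1$ and the unbounded support of $\xi_1$. This point is minor.
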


Recall that \(\mathcal F(t)\) denotes the family tree at time \(t\) in the continuous-time branching-process construction of the linear preferential attachment tree. In particular, for \(S>0\), the vertices of \(\mathcal F(S)\) are exactly those born before time \(S\).

\begin{lem}\label{lem:no_ties_nu}
Let $\nu_j$ be as in Lemma~\ref{lem:PA_limits}. For every fixed $j\ge 1$,
\[
\bbP(\nu_j=\nu_{j+1})=0.
\]
Hence
\[
\bbP(\nu_1>\nu_2>\cdots>\nu_{j+1})=1.
\]
\end{lem}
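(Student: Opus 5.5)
Our plan is to work in the continuous-time branching-process embedding of the linear PA tree (the Rudas--Tóth--Valkó / \citet{bhamidi2012spectra} construction behind Lemma~\ref{lem:PA_limits}). In that embedding, each vertex $v$ born at time $\tau_v$ reproduces as an independent pure-birth process with rate $f(\text{current degree})$. The associated martingale limits $W_v := \lim_{t\to\infty} e^{-\lambda^* (t-\tau_v)} D_v(t)$ exist almost surely, where $\lambda^*$ is the Malthusian parameter. The limit $\nu_j$ is, up to a deterministic positive factor coming from the growth of the total population, the $j$-th largest of the values $e^{-\lambda^*\tau_v} W_v$ over all vertices $v$. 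So it suffices to show that the sequence $\{e^{-\lambda^*\tau_v}W_v\}_v$ almost surely has no ties among its top $j+1$ values, and that these top values are attained by finitely many vertices.

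We would proceed in four steps.

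\textbf{Step 1 (localization).} We show that the top $j+1$ values are carried by vertices in $\mathcal F(S)$ for some finite random $S$. Any vertex born after time $S$ has $e^{-\lambda^*\tau_v}W_v \le e^{-\lambda^* S}\sup_v W_v$, and with a moment bound such as $\bbE W_v^p<\infty$ plus Borel--Cantelli over generations, $\sup_{\tau_v>S} e^{-\lambda^*\tau_v}W_v \to 0$ almost surely as $S\to\infty$. The $(j+1)$-th largest value is strictly positive almost surely, since $W_v>0$ a.s. for each $v$. Hence for large $S$ only the finitely many vertices of $\mathcal F(S)$ compete. This is the reason for the remark about $\mathcal F(S)$ preceding the lemma.

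\textbf{Step 2 (continuity of each $W_v$).} For any two distinct vertices $u\ne v$, we show $\bbP(e^{-\lambda^*\tau_u}W_u = e^{-\lambda^*\tau_v}W_v)=0$. Condition on all information except the birth process of $v$ after time $\max(\tau_u,\tau_v)$. This fixes $\tau_u,\tau_v$, and the relevant part of $W_u$ is independent of $v$'s future offspring times, except where $u$ is a descendant of $v$; in that case we instead condition on $v$'s own reproduction and exploit $u$'s. The key fact is that $W_v$ has an absolutely continuous (in particular atomless) distribution given its past. This holds because $W_v$ is a limit of a pure-birth process with linear rates, so $W_v$ is Gamma-distributed: for rate $d+b$ started at degree $d_0$, $W \sim \mathrm{Gamma}(d_0+b, 1)$ up to scaling. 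An independent atomless variable avoids a fixed value almost surely.

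\textbf{Step 3 (union bound).} A countable union over pairs $(u,v)$ of null events, intersected with the localization of Step 1, gives no ties among the top $j+1$ values. Hence $\nu_j>\nu_{j+1}$ almost surely, and intersecting over $j$ yields the strict chain $\nu_1>\nu_2>\cdots>\nu_{j+1}$.

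The main obstacle is Step 2 when $u$ and $v$ are in an ancestor relation, so that $W_u$ and $W_v$ are dependent. Our first approach is to note that $W_v$ depends only on the birth times of $v$'s children, whereas given those times, $W_u$ for a child $u$ (or any descendant of $v$) is a function of $u$'s own independent birth process. Conditioning on $v$'s process, $W_u$ remains Gamma and hence atomless, which resolves the dependence.
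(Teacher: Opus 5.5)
Your strategy works, but it is organized differently from the paper's proof. The paper never compares pairs of vertices in the infinite tree. It fixes a time $S$, conditions on the whole family tree $\mathcal F(S)$, and uses the branching property (Proposition~7.5 of \citet{bhamidi2012spectra}). Given $\mathcal F(S)$, the limiting marks $\lim_{t\to\infty}e^{-t}D_v(t)$, $v\in\mathcal F(S)$, are independent, and each is a positive multiple of $W_{a+\zeta_v(S)}$, which is Gamma and hence atomless (Lemma~\ref{lem:Wa_continuous}). Since $\mathcal F(S)$ is finite, its top $j+1$ marks are a.s.\ distinct. The paper then cites a localization \emph{in probability}: for every $\varepsilon>0$ there is $S$ such that $(\nu_1,\dots,\nu_{j+1})$ equals the top $j+1$ marks of $\mathcal F(S)$ with probability at least $1-\varepsilon$, so $\bbP(\nu_j=\nu_{j+1})\le\varepsilon$.

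Conditioning on $\mathcal F(S)$ freezes all birth times at once. That is why the ancestor--descendant dependence you flag as the main obstacle never arises in the paper. Your route proves something stronger: all values $e^{-\tau_v}W_v$ over the whole tree are a.s.\ pairwise distinct. For each pair, you condition on every reproduction process except that of the vertex that is not an ancestor of the other; its birth time is then fixed and its $W$ is an independent Gamma. The price is that you must prove an almost-sure localization yourself rather than cite one. In fact your first conditioning already covers the case where $u$ descends from $v$, since $\tau_u$ depends on $v$'s process only through jumps before $\tau_u$.

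Two slips need correcting.

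\textbf{Normalization.} Each vertex's degree is a pure-birth process with jump rates of slope one, so it grows like $e^{t}$, not $e^{\lambda^*t}$ with the Malthusian parameter $\lambda^*=2+b$. With your normalization every $W_v$ would be $0$, contradicting both $W_v>0$ and the Gamma law you use in Step~2. The right objects are $W_v=\lim_{t\to\infty}e^{-(t-\tau_v)}D_v(t)$ and the ranking values $e^{-\tau_v}W_v$. The parameter $\lambda^*$ enters only through the population size, producing the factor $W^{-1/(2+b)}$. That factor is random rather than deterministic, but it is positive and common to all vertices, so it is harmless for ties.

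\textbf{Localization bound.} In Step~1 the bound $e^{-\lambda^*S}\sup_v W_v$ is vacuous, because $\sup_v W_v=\infty$ a.s. Run Borel--Cantelli over birth-time blocks $[s,s+1]$ instead of over generations. Use that a newborn's $W_v$ is Gamma and independent of its birth time, and that $\bbE\,\#\mathcal F(s+1)=O(e^{(2+b)s})$. This yields $\max_{\tau_v\in[s,s+1]}W_v\le e^{s/2}$ eventually, and hence $\sup_{\tau_v>S}e^{-\tau_v}W_v\to0$ a.s. Alternatively, cite the localization from \citet{bhamidi2012spectra}, as the paper does.
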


\begin{proof}
Fix \(j\ge 1\). By Corollary~\ref{cor:early_marks_no_ties}, for every \(S>0\),
\[
Y_1^S>Y_2^S>\cdots>Y_{j+1}^S>0
\qquad\text{a.s. on }\{\#\mathcal F(S)\ge j+1\}.
\]

Moreover, by Lemma 7.4 together with Corollary 7.6 and the completion argument following Corollary 7.6 in \citet{bhamidi2012spectra}, for every \(\varepsilon>0\) there exists \(S<\infty\) such that
\[
\mathbb P\bigl((\nu_1,\dots,\nu_{j+1})=(Y_1^S,\dots,Y_{j+1}^S)\bigr)\ge 1-\varepsilon.
\]
On the event
\[
(\nu_1,\dots,\nu_{j+1})=(Y_1^S,\dots,Y_{j+1}^S),
\]
we necessarily have \(\#\mathcal F(S)\ge j+1\), and hence
\[
\nu_j = Y_j^S > Y_{j+1}^S = \nu_{j+1}.
\]
Therefore,
\[
\{\nu_j=\nu_{j+1}\}
\subseteq
\{(\nu_1,\dots,\nu_{j+1})\neq (Y_1^S,\dots,Y_{j+1}^S)\}.
\]
It follows that
\[
\mathbb P(\nu_j=\nu_{j+1})
\le
\mathbb P\bigl((\nu_1,\dots,\nu_{j+1})\neq (Y_1^S,\dots,Y_{j+1}^S)\bigr)
\le \varepsilon.
\]
Since \(\varepsilon>0\) was arbitrary, we conclude that
\[
\mathbb P(\nu_j=\nu_{j+1})=0.
\]

Finally, since \(\nu_1\ge \nu_2\ge \cdots\) almost surely and
\(\mathbb P(\nu_i=\nu_{i+1})=0\) for each \(1\le i\le j\), we obtain
\[
\mathbb P(\nu_1>\nu_2>\cdots>\nu_{j+1})=1.
\]
\end{proof}

\begin{lem}\label{lem:Wa_continuous}
For the pure birth process $N_a$ in Theorem~7.2 of \citep{bhamidi2012spectra}(d), with jump rate
\[
q_{k,k+1}=k+1+a,\qquad k\ge 0,\quad a>-1,
\]
the martingale limit
\[
W_a:=\lim_{t\to\infty} e^{-t}N_a(t)
\]
has the $\mathrm{Gamma}(1+a,1)$ distribution. In particular, $W_a$ has a continuous density on $(0,\infty)$, and hence
\[
\bbP(W_a=x)=0
\qquad\text{for every }x\ge 0.
\]
\end{lem}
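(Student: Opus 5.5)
The plan is to identify $N_a$ with a standard continuous-time Markov branching object, compute the distribution of $N_a(t)$ explicitly, and pass to the limit in Laplace transforms. The pure birth process starting from $N_a(0)=0$ with rates $q_{k,k+1}=k+1+a$ is a shifted Yule-type process. Indeed, $M(t):=N_a(t)+1+a$ jumps by $+1$ at rate $M(t)$, starting from $M(0)=1+a>0$. This is a linear birth process with non-integer initial "mass," which is a continuous-state analogue of a Yule process started from $1+a$ individuals.

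The first step is to compute the generating function of $N_a(t)$. Write $p_k(t)=\bbP(N_a(t)=k)$. The forward equations are
\[
p_k'=(k+a)p_{k-1}-(k+1+a)p_k .
\]
These are solved by the negative binomial law
\[
p_k(t)=\frac{\Gamma(k+1+a)}{\Gamma(1+a)\,k!}\,e^{-(1+a)t}\bigl(1-e^{-t}\bigr)^k .
\]
I would verify this directly by substituting it into the ODE; this is routine. Alternatively, one can check that $\bbE[s^{N_a(t)}]=e^{-(1+a)t}\bigl(1-(1-e^{-t})s\bigr)^{-(1+a)}$ satisfies the generating-function PDE. Thus $N_a(t)\sim\mathrm{NegBin}(1+a,\,e^{-t})$.

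The second step identifies the limit. Since $\bbE[N_a(t)+1+a]=(1+a)e^{t}$ and $e^{-t}(N_a(t)+1+a)$ is a nonnegative martingale, the limit $W_a$ exists almost surely. I would then compute the Laplace transform with $s=e^{-\theta e^{-t}}$:
\[
\bbE\bigl[e^{-\theta e^{-t}N_a(t)}\bigr]=\Bigl(\frac{e^{-t}}{1-(1-e^{-t})e^{-\theta e^{-t}}}\Bigr)^{1+a}.
\]
Expanding the denominator gives $1-(1-e^{-t})(1-\theta e^{-t}+O(e^{-2t}))=e^{-t}(1+\theta)+O(e^{-2t})$. The transform therefore converges to $(1+\theta)^{-(1+a)}$, which is the Laplace transform of $\mathrm{Gamma}(1+a,1)$. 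By the continuity theorem together with almost-sure convergence (which implies convergence in distribution), $W_a\sim\mathrm{Gamma}(1+a,1)$.

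The last step uses the fact that $a>-1$ gives shape $1+a>0$. Hence $W_a$ has density $x^{a}e^{-x}/\Gamma(1+a)$, which is continuous on $(0,\infty)$. A law with a density has no atoms, so $\bbP(W_a=x)=0$ for every $x\ge0$; at $x=0$ this holds by absolute continuity as well.

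The main obstacle is matching the exact conventions of Theorem~7.2(d) in \citet{bhamidi2012spectra}, namely the initial state and whether the rate is $k+1+a$ from $0$. If the convention differs, say starting at $1$, the shape parameter shifts and must be rechecked. Beyond that, the argument only requires justifying the interchange of limit and transform, which bounded convergence provides.
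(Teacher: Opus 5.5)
Your proposal is correct and follows essentially the same route as the paper. Both arguments compute $\bbE[s^{N_a(t)}]=\bigl(e^{-t}/(1-(1-e^{-t})s)\bigr)^{1+a}$ (you via the forward equations and the negative binomial pmf, the paper via the generator PDE), substitute $s=e^{-\theta e^{-t}}$, obtain the limit $(1+\theta)^{-(1+a)}$ by bounded convergence, and identify the $\mathrm{Gamma}(1+a,1)$ law; the only minor difference is that you justify the almost-sure existence of $W_a$ yourself through the martingale $e^{-t}(N_a(t)+1+a)$, whereas the paper cites Theorem~7.2(d) of \citet{bhamidi2012spectra}.
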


\begin{proof}
Let
\[
G_a(t,s):=\bbE[s^{N_a(t)}],\qquad |s|\le 1.
\]
Since the generator is
\[
Lf(k)=(k+1+a)\bigl(f(k+1)-f(k)\bigr),
\]
we have
\[
\partial_t G_a(t,s)
=
\bbE[L(s^{N_a(t)})]
=
(s-1)\Bigl(s\,\partial_s G_a(t,s)+(1+a)G_a(t,s)\Bigr),
\]
with initial condition $G_a(0,s)=1$.

A direct check shows that
\[
G_a(t,s)
=
\left(
\frac{e^{-t}}{1-(1-e^{-t})s}
\right)^{1+a}
\]
solves this equation. Thus, for every $\theta\ge 0$,
\[
\bbE\!\left[e^{-\theta e^{-t}N_a(t)}\right]
=
G_a\!\left(t,e^{-\theta e^{-t}}\right)
=
\left(
\frac{e^{-t}}{1-(1-e^{-t})e^{-\theta e^{-t}}}
\right)^{1+a}.
\]
Since
\[
1-(1-e^{-t})e^{-\theta e^{-t}}
=
(1+\theta)e^{-t}+o(e^{-t}),
\qquad t\to\infty,
\]
it follows that
\[
\lim_{t\to\infty}\bbE\!\left[e^{-\theta e^{-t}N_a(t)}\right]
=
(1+\theta)^{-(1+a)}.
\]
By Theorem 7.2(d) of \cite{bhamidi2012spectra}, $e^{-t}N_a(t)\to W_a$ almost surely, so bounded convergence gives
\[
\bbE[e^{-\theta W_a}]
=
(1+\theta)^{-(1+a)},
\qquad \theta\ge 0.
\]
This is the Laplace transform of the \(\Gamma(1+a,1)\) distribution. Hence
\[
W_a\sim \Gamma(1+a,1),
\]
with density
\[
f_{W_a}(x)=\frac{1}{\Gamma(1+a)}x^{a}e^{-x}\mathbf 1_{\{x>0\}}.
\]
In particular, \(W_a\) is absolutely continuous on \([0,\infty)\), and therefore
\[
\mathbb P(W_a=x)=0\qquad\text{for every }x\ge 0.
\]
\end{proof}

\begin{coro}\label{cor:early_marks_no_ties}
Fix $S>0$. Conditional on $\mathcal{F}(S)$, the limiting marks in Proposition~7.5 of \cite{bhamidi2012spectra} are independent and atomless. Consequently, on the event $\{\#\mathcal{F}(S)\ge k+1\}$,
\[
Y_1^{S}>Y_2^{S}>\cdots>Y_{k+1}^{S}>0
\qquad\text{a.s.}
\]
\end{coro}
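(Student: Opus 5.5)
The argument works conditionally on the genealogical information available up to a fixed time $S$ in the continuous-time branching-process embedding of the linear PA tree. It has two parts: first show that the limiting marks are conditionally independent and atomless, then deduce strict ordering by a union bound over pairs. Following Proposition~7.5 of \citet{bhamidi2012spectra}, each vertex $v\in\mathcal F(S)$ carries a limiting mark $Y_v^S=\lim_{t\to\infty} e^{-t/(2+b)}\,\deg_v(t)$, up to the normalization used there. The process $\deg_v(t)$ evolves after time $S$ as a pure birth process with rates of the form $q_{m,m+1}=m+1+a$, started from the degree $\deg_v(S)$ that the vertex has at time $S$. By the strong Markov property and the independence of the offspring processes of distinct individuals in the branching construction, conditional on $\mathcal F(S)$ the post-$S$ evolutions of the distinct vertices are independent. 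Their martingale limits are therefore conditionally independent. The ordered statistics $Y_1^S\ge Y_2^S\ge\cdots$ are the decreasing rearrangement of these marks.

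The central step is atomlessness of each conditional mark. Fix $v$ and condition on $\deg_v(S)=m_0$. A birth process with rates $m+1+a$ started at $m_0$ has the law of $N_{a'}$ shifted, with $a'=a+m_0$; equivalently, by the additive structure of linear rates, it is $N_a$ run from state $m_0$, which is again a linear birth process of the type treated in Lemma~\ref{lem:Wa_continuous}. Hence $e^{-(t-S)}N(t)\to W_{a+m_0}\sim\mathrm{Gamma}(1+a+m_0,1)$ by Lemma~\ref{lem:Wa_continuous}. The mark $Y_v^S$ is then a deterministic positive rescaling $e^{-S}W_{a+m_0}$, up to the paper's time normalization. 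It is therefore absolutely continuous with a density on $(0,\infty)$, and in particular atomless and strictly positive a.s. I expect the main obstacle to be matching the exact rate parametrization and time scaling of Proposition~7.5, because the PA time change uses the factor $1/(2+b)$. The fix is only a rescaling, but it has to be verified so that the shifted parameter satisfies $a+m_0>-1$, which holds since $m_0\ge0$ and $a>-1$.

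Strictness then follows by a union bound. Conditional on $\mathcal F(S)$ with $\#\mathcal F(S)\ge k+1$, for any two distinct vertices $u\ne v$, independence and atomlessness give
\[
\bbP(Y_u^S=Y_v^S\mid\mathcal F(S))=\int \bbP(Y_u^S=y\mid\mathcal F(S))\,d\bbP_{Y_v^S}(y)=0 .
\]
There are only finitely many vertices in $\mathcal F(S)$ almost surely, since the explosion-free branching process has finite population at finite times. A union bound over pairs therefore shows that all marks are a.s. distinct, and they are positive. The top $k+1$ ordered values are consequently strictly decreasing and positive. Integrating over $\mathcal F(S)$ on the event $\{\#\mathcal F(S)\ge k+1\}$ gives the claim.
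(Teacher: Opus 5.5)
Your proposal is correct and follows the paper's argument: the marks are conditionally independent given $\mathcal F(S)$, each is atomless because it is a positive $\mathcal F(S)$-measurable multiple of a Gamma variable $W_{a+\zeta_v(S)}$ (Lemma~\ref{lem:Wa_continuous}), and a union bound over the finitely many pairs in $\mathcal F(S)$ then gives distinct marks. The only difference is that you derive the content of Proposition~7.5 yourself rather than citing it, using the Markov property after time $S$ and the shift $N-m_0\stackrel{d}{=}N_{a+m_0}$. You also make the positivity of $Y_{k+1}^S$ explicit. One small correction: the continuous-time normalization is $e^{-t}$, not $e^{-t/(2+b)}$, which matches what you actually use later; this does not affect the argument.
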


\begin{proof}
Conditional on $\mathcal F(S)$, let $\{M_v : v\in \mathcal F(S)\}$ denote the limiting marks attached to the vertices in $\mathcal F(S)$. By Proposition~7.5 of \cite{bhamidi2012spectra}, conditional on $\mathcal F(S)$, the random variables $\{M_v : v\in \mathcal F(S)\}$ are independent, and for each $v\in\mathcal F(S)$, the law of $M_v$ is that of a strictly positive constant multiple of $W_{a+\zeta_v(S)}$.

By Lemma~\ref{lem:Wa_continuous}, the random variable $W_{a+\zeta_v(S)}$ is atomless, and hence so is any strictly positive constant multiple of it. Therefore, for any distinct vertices $u,v\in\mathcal F(S)$,
\[
\mathbb P(M_u=M_v\mid \mathcal F(S))=0.
\]
Since $\mathcal F(S)$ is finite, taking a union bound over all unordered pairs of distinct vertices yields
\[
\mathbb P\Bigl(\exists\,u\neq v\in\mathcal F(S): M_u=M_v \,\Bigm|\, \mathcal F(S)\Bigr)=0.
\]
Thus, conditional on $\mathcal F(S)$, all limiting marks $\{M_v : v\in\mathcal F(S)\}$ are distinct almost surely.

On the event $\{\#\mathcal F(S)\ge k+1\}$, the random variables $Y_1^S,\dots,Y_{k+1}^S$ are the first $k+1$ order statistics, in decreasing order, of the finite collection $\{M_v : v\in\mathcal F(S)\}$. Since these marks are almost surely all distinct conditional on $\mathcal F(S)$, their order statistics are almost surely strictly decreasing. Therefore,
\[
Y_1^{S}>Y_2^{S}>\cdots>Y_{k+1}^{S}>0
\qquad\text{a.s. on }\{\#\mathcal F(S)\ge k+1\}.
\]
\end{proof}

\subsubsection{Proof of Theorem \ref{thm:ER_threshold}}

\begin{lem}\label{lem:ER_maximum}
Assume $\Ab \sim \mathcal{G}(n, p_n)$ with $n p_n \to \infty$ as $n \to \infty$.
Let $k$ be fixed and let $C(n)\to\infty$ arbitrarily slowly.
Then, with probability $1-o(1)$,
\[
H^* + C(n)\Bigl(\frac{n p_n}{\log n}\Bigr)^{1/2}
\;\ge\;
d_1 \ge d_2 \ge \cdots \ge d_{k+1}
\;\ge\;
H^* - C(n)\Bigl(\frac{n p_n}{\log n}\Bigr)^{1/2}
\;>\; 0,
\]
where
\[
H^*
= n p_n
  + \bigl(2 p_n q_n n \log n\bigr)^{1/2}
  - \Bigl(\frac{p_n q_n n}{8 \log n}\Bigr)^{1/2}
    \log \log n,
\qquad q_n := 1 - p_n.
\]
\end{lem}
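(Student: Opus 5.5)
The plan follows Bollob\'as's classical treatment of the maximal degree of $\mathcal G(n,p_n)$. The idea is to normalise the degrees and compute the expected number of vertices whose degree exceeds the two thresholds $H^*\pm C(n)(np_n/\log n)^{1/2}$. Each $d_i\sim\mathrm{Bin}(n-1,p_n)$. Write $\sigma_n:=(np_nq_n)^{1/2}$ and
\[
u_n:=\frac{H^*-np_n}{\sigma_n}=\sqrt{2\log n}-\frac{\log\log n}{2\sqrt{2\log n}}.
\]
The shift $C(n)(np_n/\log n)^{1/2}$ corresponds in standardised units to $C(n)/(q_n^{1/2}\sqrt{\log n})\asymp C(n)/u_n$. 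The choice of $u_n$ gives
\[
\tfrac{u_n^2}{2}=\log n-\tfrac12\log\log n+o(1),
\qquad\text{hence}\qquad
n\,\frac{\phi(u_n)}{u_n}\to \frac{1}{2\sqrt{\pi}},
\]
which is a finite positive constant. Since $(u_n\pm a/u_n)^2/2=u_n^2/2\pm a+o(1)$, shifting the level by $\pm a/u_n$ multiplies this expected count by roughly $e^{\mp a}$. Taking $a\asymp C(n)\to\infty$, the expected number of vertices above the upper threshold therefore tends to $0$, and the expected number above the lower threshold tends to $\infty$.

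First I would prove a moderate-deviation estimate for the binomial at levels $x=u_n\pm O(C(n)/u_n)$, that is, at $x\sim\sqrt{2\log n}$:
\[
\bbP\bigl(d_i\ge np_n+x\sigma_n\bigr)=(1+o(1))\,\frac{\phi(x)}{x}\,e^{O(x^3/\sqrt{np_n})}.
\]
This can be obtained from the local de Moivre--Laplace theorem with the Cram\'er correction term, or from Stirling's formula applied to the binomial point masses and then summed as a geometric series. This is the step I expect to be the main obstacle. The cubic correction is negligible only when $(\log n)^{3/2}=o(\sqrt{np_n})$. For the dense regime in which the lemma is used ($p_n\in[c_1,1-c_1]$ in Theorem~\ref{thm:ER_threshold} and Theorem~\ref{thm:ER_lower_bound}) this holds easily. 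For sparser $p_n$ one either needs this extra growth condition, or must absorb the correction into the slowly growing $C(n)$ using the full Cram\'er series. I would state and verify the estimate under $np_n\gg\log^3 n$, which covers every application in the paper. I would also allow $C(n)$ to grow slowly enough that $C(n)/u_n=o(u_n)$, so that the same asymptotics apply at both shifted levels.

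For the upper bound, I would apply a union bound:
\[
\bbP\bigl(d_1>H^*+C(n)(np_n/\log n)^{1/2}\bigr)\le n\,\bbP(d_1>\cdots)\lesssim e^{-c\,C(n)}\to0 .
\]

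For the lower bound, set $X:=\#\{i:\ d_i\ge H^*-C(n)(np_n/\log n)^{1/2}\}$. By the estimate above, $\bbE X\gtrsim e^{cC(n)}\to\infty$. I would then show $\Var X=o((\bbE X)^2)$. Two degrees $d_i,d_j$ depend on each other only through the single edge $A_{ij}$. Conditioning on that edge makes them independent binomials with $n-2$ trials, shifted by $0$ or $1$. The same tail estimate shows that a shift of one unit changes the tail probability only by a factor $1+O(u_n/\sigma_n)=1+o(1)$. Hence
\[
\bbP(i,j\text{ both exceed})=(1+o(1))\,\bbP(i\text{ exceeds})^2,
\]
which gives $\Var X\le \bbE X+o((\bbE X)^2)$. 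Chebyshev's inequality then yields $X\ge k+1$ with probability $1-o(1)$, because $k$ is fixed and $\bbE X\to\infty$; this is exactly the statement $d_{k+1}\ge H^*-C(n)(np_n/\log n)^{1/2}$.

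Finally, positivity of the lower threshold is immediate: $H^*-C(n)(np_n/\log n)^{1/2}=np_n(1+o(1))>0$, since $np_n\to\infty$ dominates both $\sqrt{np_n\log n}$ and $C(n)\sqrt{np_n/\log n}$.
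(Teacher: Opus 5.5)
Your proposal is correct and is essentially the argument the paper has in mind. The paper says the proof is deferred to the appendix but never writes it out (the lemma is Bollob\'as's classical result). The same scheme is, however, carried out in the proof of Lemma~\ref{lem:quantile_bound}: a first moment and union bound at the upper level, Chebyshev on the exceedance count at the lower level, the covariance controlled by conditioning on the shared edge $A_{ij}$, and the moderate-deviation tail estimate of Lemma~\ref{lem:binom_approx}.

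Your extra growth condition is not a defect of your argument; the statement itself needs it. Lemma~\ref{lem:binom_approx} requires $x_n=o(\sigma_n^{1/3})$, which at $x_n\sim\sqrt{2\log n}$ means $np_nq_n\gg(\log n)^3$. With only $np_n\to\infty$ the lemma is false. For example, when $np_n=(\log n)^2$ the skewness correction $x_n^3/(6\sigma_n)\asymp\sqrt{\log n}$ is unbounded. It places $d_1$ above $H^*$ by order $\log n$, far outside the window $C(n)(np_n/\log n)^{1/2}=C(n)\sqrt{\log n}$.

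So your alternative of absorbing the Cram\'er correction into $C(n)$ is not available. You should also phrase the condition through $\sigma_n^2=np_nq_n$ rather than $np_n$; the two are equivalent in the dense regime $p_n\in[c_1,1-c_1]$ where the lemma is applied.
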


The proof of Lemma~\ref{lem:ER_maximum} is deferred to the appendix.

\begin{proof}[Proof of Theorem~\ref{thm:ER_threshold}]

As an immediate consequence of Lemma~\ref{lem:ER_maximum}, we have
\[
d_k - d_{k+1}
\;\le\;
2C(n)\Bigl(\frac{n p_n}{\log n}\Bigr)^{1/2}
\]
with probability $1-o(1)$.

Next, we lower bound the variance terms appearing in the noisy degree observations. Recall that
\[
\sigma_{n,i}^2
=
(n-1-d_i)\alpha_n(1-\alpha_n)
+
d_i \beta_n(1-\beta_n).
\]
Using the degree concentration in Lemma~\ref{lem:ER_maximum}, we obtain that, for all \(1 \le i \le k+1\),
\[
\sigma_{n,i}^2 \;\ge\; n\, c_n(\alpha_n,\beta_n)
\]
with probability \(1-o(1)\), where
\[
c_n(\alpha_n,\beta_n)
:=
\Bigl[q_n - \bigl(2p_n q_n \log n / n\bigr)^{1/2}\Bigr]\alpha_n(1-\alpha_n)
+
\Bigl[p_n - \bigl(2p_n q_n \log n / n\bigr)^{1/2}\Bigr]\beta_n(1-\beta_n).
\]
We now verify the sufficient condition of Theorem~\ref{thm:infeasibility}. Since
\[
\sigma_{n,i} \ge \sqrt{n\,c_n(\alpha_n,\beta_n)},
\]
it suffices to check that
\begin{equation}\label{eqn:ER_condition}
2C(n)\Bigl(\frac{n p_n}{\log n}\Bigr)^{1/2}
\;\le\;
\frac{1}{1-\alpha_n-\beta_n}
\Bigl(\sqrt{2\log(n-k)} - \epsilon_1(n) - \epsilon_2(n)\Bigr)
\sqrt{n\,c_n(\alpha_n,\beta_n)}.
\end{equation}

Since $k$ is fixed and $\alpha_n+\beta_n\le 1-c_0$, the right-hand side of
\eqref{eqn:ER_condition} is of order
\[
\sqrt{\log n}\,\sqrt{n\,c_n(\alpha_n,\beta_n)}.
\]
Therefore, \eqref{eqn:ER_condition} holds provided that
\[
C(n)\Bigl(\frac{n p_n}{\log n}\Bigr)^{1/2}
\;\lesssim\;
\sqrt{\log n}\,\sqrt{n q_n \alpha_n(1-\alpha_n)},
\]
or
\[
C(n)\Bigl(\frac{n p_n}{\log n}\Bigr)^{1/2}
\;\lesssim\;
\sqrt{\log n}\,\sqrt{n p_n \beta_n(1-\beta_n)}.
\]

\medskip
\noindent
\textbf{Dense regime.}
When $p_n = q_n = \Theta(1)$, the above inequalities simplify to
\[
C(n)\,\frac{\sqrt{n}}{\sqrt{\log n}}
\;\lesssim\;
\sqrt{n\log n}\,\sqrt{\alpha_n}
\qquad\text{or}\qquad
C(n)\,\frac{\sqrt{n}}{\sqrt{\log n}}
\;\lesssim\;
\sqrt{n\log n}\,\sqrt{\beta_n}.
\]
Equivalently,
\[
C(n)\;\lesssim\; (\log n)\sqrt{\alpha_n}
\qquad\text{or}\qquad
C(n)\;\lesssim\; (\log n)\sqrt{\beta_n}.
\]
Since \(C(n)\to\infty\) arbitrarily slowly, the above inequalities hold whenever
\[
(\log n)\sqrt{\alpha_n}\to\infty
\qquad\text{or}\qquad
(\log n)\sqrt{\beta_n}\to\infty,
\]
that is,
\[
\alpha_n \gg \frac{1}{(\log n)^2}
\qquad\text{or}\qquad
\beta_n \gg \frac{1}{(\log n)^2}.
\]
The conclusion follows.
\end{proof}

\subsubsection{Proof of Lemma \ref{lem:lower_bound}}

\begin{proof}
Without loss of generality, relabel the vertices so that
$d_1 \ge d_2 \ge \cdots \ge d_n$, and hence $S_k=\{1,\ldots,k\}$.
Since both $S_k$ and $\widetilde S_k$ have cardinality $k$,
\[
\frac12\,d_H(S_k,\widetilde S_k)
=
\sum_{i=1}^k \bone\{i\notin \widetilde S_k\}
=
\sum_{i=k+1}^n \bone\{i\in \widetilde S_k\}.
\]

Now let $t$ satisfy $\widetilde d_{(k+1)} \le t \le \widetilde d_{(k)}$ almost surely.
If $i\le k$ and $\widetilde d_i<t$, then necessarily $i\notin \widetilde S_k$, since every node in $\widetilde S_k$ has noisy degree at least $\widetilde d_{(k)}\ge t$. Likewise, if $i>k$ and $\widetilde d_i>t$, then necessarily $i\in \widetilde S_k$, since $\widetilde d_i>\widetilde d_{(k+1)}$ implies that $i$ must belong to the set of the $k$ largest noisy degrees. Therefore,
\[
d_H(S_k,\widetilde S_k)
\ge
2\max\left(
\sum_{i=1}^k \bone\{\widetilde d_i<t\},
\sum_{i=k+1}^n \bone\{\widetilde d_i>t\}
\right),
\]
which proves \eqref{eq:lower-bound-max}.

Taking expectations yields \eqref{eq:lower-bound-max-exp1}. Then \eqref{eq:lower-bound-max-exp2} follows from
\[
\bbE[\max(X,Y)]\ge \max\{\bbE X,\bbE Y\}.
\]
\end{proof}

\subsubsection{Proof of Theorem \ref{thm:ER_lower_bound}}

\begin{proof}
Without loss of generality, assume that $
d_1 \ge d_2 \ge \cdots \ge d_n,$
so that $
S_k=\{1,\ldots,k\}.$
Let
\[
t:=\widetilde d_{(k+1)}.
\]
Then \(t\) is measurable and satisfies
\[
\widetilde d_{(k+1)} \le t \le \widetilde d_{(k)}
\qquad \text{almost surely}.
\]
Hence, by Lemma~\ref{lem:lower_bound},
\begin{align}
\frac{1}{2}\,\bbE\!\left[d_H(S_k,\widetilde S_k)\right]
\ge
\sum_{i=1}^k \bbP(\widetilde d_i<t).
\label{eq:ER-proof-1}
\end{align}

Under the edge-flip noise model, for each \(1\le i<j\le n\),
\[
Y \sim \mathrm{Bernoulli}(\widetilde p_n),
\qquad
\widetilde p_n=p_n(1-\alpha_n-\beta_n)+\alpha_n,
\]
independently. Therefore,
\[
Y \sim \mathrm{ER}(n,\widetilde p_n).
\]
Let
\[
\widetilde q_n:=1-\widetilde p_n,
\]
and define
\[
\widetilde H^*
=
n \widetilde p_n
+\bigl(2 \widetilde p_n \widetilde q_n n \log n\bigr)^{1/2}
-\Bigl(\frac{\widetilde p_n \widetilde q_n n}{8 \log n}\Bigr)^{1/2}\log\log n,
\]
and
\[
\widetilde\Delta_n
:=
C(n)\Bigl(\frac{n\widetilde p_n}{\log n}\Bigr)^{1/2}.
\]
Applying Lemma~\ref{lem:ER_maximum} to the noisy graph \(Y\), we obtain
\[
\bbP\!\left(
\widetilde d_{(k+1)}\ge \widetilde H^*-\widetilde\Delta_n
\right)
=1-o(1).
\]
It follows that, for each \(1\le i\le k\),
\[
\bbP(\widetilde d_i<t)
\ge
\bbP\!\left(\widetilde d_i<\widetilde H^*-\widetilde\Delta_n\right)-o(1).
\]
Substituting this into \eqref{eq:ER-proof-1}, and using that \(k\) is fixed, yields
\begin{align}
\frac{1}{2}\,\bbE\!\left[d_H(S_k,\widetilde S_k)\right]
\ge
\sum_{i=1}^k
\bbP\!\left(\widetilde d_i<\widetilde H^*-\widetilde\Delta_n\right)
-o(1).
\label{eq:ER-proof-2}
\end{align}

Now define
\[
H^*
=
n p_n
+\bigl(2 p_n q_n n \log n\bigr)^{1/2}
-\Bigl(\frac{p_n q_n n}{8 \log n}\Bigr)^{1/2}\log\log n,
\qquad q_n:=1-p_n,
\]
and
\[
\Delta_n
:=
C(n)\Bigl(\frac{n p_n}{\log n}\Bigr)^{1/2}.
\]
Let
\[
\mathcal E_n
:=
\left\{
H^*+\Delta_n\ge d_1\ge \cdots \ge d_k\ge H^*-\Delta_n
\right\}.
\]
By Lemma~\ref{lem:ER_maximum},
\[
\bbP(\mathcal E_n)=1-o(1).
\]

Fix \(1\le i\le k\). Conditional on \(d_i\), we have
\[
\widetilde d_i
=
X_i+Y_i,
\]
where
\[
X_i\sim \mathrm{Bin}(d_i,1-\beta_n),
\qquad
Y_i\sim \mathrm{Bin}(n-1-d_i,\alpha_n),
\]
independently. Hence
\[
\mu(d_i):=\bbE(\widetilde d_i\mid d_i)
=
(1-\alpha_n-\beta_n)d_i+\alpha_n(n-1),
\]
and
\[
v(d_i):=\Var(\widetilde d_i\mid d_i)
=
d_i\beta_n(1-\beta_n)+(n-1-d_i)\alpha_n(1-\alpha_n).
\]

On the event \(\mathcal E_n\), we have
\[
d_i\ge H^*-\Delta_n
\qquad\text{and}\qquad
d_i\le H^*+\Delta_n.
\]
Since
\[
H^*
=
n p_n
+\bigl(2 p_n q_n n \log n\bigr)^{1/2}
+o(\sqrt{n\log n}),
\]
it follows that, on \(\mathcal E_n\),
\[
\frac{d_i}{n}
\ge
p_n-\Bigl(2p_nq_n\frac{\log n}{n}\Bigr)^{1/2}
\]
and
\[
\frac{n-1-d_i}{n}
\ge
q_n-\Bigl(2p_nq_n\frac{\log n}{n}\Bigr)^{1/2}
\]
for all sufficiently large \(n\). Therefore,
\[
v(d_i)
\ge
n\,c_n(\alpha_n,\beta_n)
\]
for all sufficiently large \(n\), where
\[
c_n(\alpha_n,\beta_n)
:=
\left[q_n-\left(2 p_n q_n \frac{\log n}{n}\right)^{1/2}\right]\alpha_n(1-\alpha_n)
+
\left[p_n-\left(2 p_n q_n \frac{\log n}{n}\right)^{1/2}\right]\beta_n(1-\beta_n).
\]

Also, on \(\mathcal E_n\),
\[
\mu(d_i)\le \mu(H^*+\Delta_n). 
\]
By definition,
\[
\mu(H^*+\Delta_n)
=
(1-\alpha_n-\beta_n)(H^*+\Delta_n)+\alpha_n(n-1).
\]
Hence
\[
\widetilde H^*-\mu(H^*+\Delta_n)
=
\alpha_n
-(1-\alpha_n-\beta_n)\Delta_n
+
\Bigl(\sqrt{2n\log n}-\sqrt{\tfrac{n}{8\log n}}\log\log n\Bigr)
\Bigl[
\sqrt{\widetilde p_n\widetilde q_n}
-(1-\alpha_n-\beta_n)\sqrt{p_nq_n}
\Bigr].
\]
Since
\[
\widetilde p_n=(1-\alpha_n-\beta_n)p_n+\alpha_n,
\qquad
\widetilde q_n=(1-\alpha_n-\beta_n)q_n+\beta_n,
\]
we have
\begin{align*}
\widetilde p_n\widetilde q_n
&=
\bigl((1-\alpha_n-\beta_n)p_n+\alpha_n\bigr)
\bigl((1-\alpha_n-\beta_n)q_n+\beta_n\bigr) \\
&=
(1-\alpha_n-\beta_n)^2p_nq_n
+(1-\alpha_n-\beta_n)p_n\beta_n \\
&\quad
+(1-\alpha_n-\beta_n)q_n\alpha_n
+\alpha_n\beta_n \\
&\ge
(1-\alpha_n-\beta_n)^2p_nq_n.
\end{align*}
Taking square roots yields
\[
\sqrt{\widetilde p_n\widetilde q_n}
\ge
(1-\alpha_n-\beta_n)\sqrt{p_nq_n}.
\]
It follows that
\[
\widetilde H^*-\mu(H^*+\Delta_n)
\ge
-(1-\alpha_n-\beta_n)\Delta_n.
\]
Therefore, on \(\mathcal E_n\),
\[
\widetilde H^*-\widetilde\Delta_n-\mu(d_i)
\ge
-(1-\alpha_n-\beta_n)\Delta_n-\widetilde\Delta_n.
\]
Since \(p_n\le 1\) and \(\widetilde p_n\le 1\),
\[
\Delta_n\le C(n)\sqrt{\frac{n}{\log n}},
\qquad
\widetilde\Delta_n\le C(n)\sqrt{\frac{n}{\log n}},
\]
and hence
\[
\widetilde H^*-\widetilde\Delta_n-\mu(d_i)
\ge
-2C(n)\sqrt{\frac{n}{\log n}}
\]
on \(\mathcal E_n\), for all sufficiently large \(n\).

Since \(\alpha_n \gtrsim \frac{\log n}{n}\) or \(\beta_n \gtrsim \frac{\log n}{n}\), the conditional variance tends to infinity. Thus, by Berry--Esseen for sums of independent Bernoulli random variables,
\[
\bbP\!\left(
\widetilde d_i<\widetilde H^*-\widetilde\Delta_n
\,\middle|\, d_i
\right)
\ge
\Phi\!\left(
\frac{\widetilde H^*-\widetilde\Delta_n-\mu(d_i)}{\sqrt{v(d_i)}}
\right)-o(1),
\]
uniformly for \(1\le i\le k\). On \(\mathcal E_n\), the bounds above imply
\[
\frac{\widetilde H^*-\widetilde\Delta_n-\mu(d_i)}{\sqrt{v(d_i)}}
\ge
-\frac{2C(n)}{\sqrt{c_n(\alpha_n,\beta_n)}\,\sqrt{\log n}}.
\]
Hence
\[
\bbP\!\left(
\widetilde d_i<\widetilde H^*-\widetilde\Delta_n
\,\middle|\, \mathcal E_n
\right)
\ge
\Phi\!\left(
-\frac{2C(n)}{\sqrt{c_n(\alpha_n,\beta_n)}\,\sqrt{\log n}}
\right)-o(1).
\]
Since \(\bbP(\mathcal E_n)=1-o(1)\), it follows that
\[
\bbP\!\left(
\widetilde d_i<\widetilde H^*-\widetilde\Delta_n
\right)
\ge
\Phi\!\left(
-\frac{2C(n)}{\sqrt{c_n(\alpha_n,\beta_n)}\,\sqrt{\log n}}
\right)-o(1),
\]
uniformly for \(1\le i\le k\). Substituting this into \eqref{eq:ER-proof-2}, we obtain
\[
\frac{1}{2}\,\bbE\!\left[d_H(S_k,\widetilde S_k)\right]
\ge
k\,
\Phi\!\left(
-\frac{2C(n)}{\sqrt{c_n(\alpha_n,\beta_n)}\,\sqrt{\log n}}
\right)
-o(1),
\]
which completes the proof.
\end{proof}

\subsubsection{Proof of Lemma \ref{lem:upper_bound}}

\begin{proof}
The argument is analogous to that of Lemma~\ref{lem:lower_bound}. Since
\[
\frac12\,d_H(S_k,\widetilde S_k)
=
\sum_{i=1}^k \bone\{i\notin \widetilde S_k\}
=
\sum_{i=k+1}^n \bone\{i\in \widetilde S_k\},
\]
it suffices to note that if $i\le k$ and $i\notin \widetilde S_k$, then necessarily $\widetilde d_i\le t$, while if $i>k$ and $i\in \widetilde S_k$, then necessarily $\widetilde d_i\ge t$. This gives \eqref{eq:upper-bound-min}, and taking expectations yields \eqref{eq:upper-bound-min-exp}.
\end{proof}

\subsubsection{Proof of Theorem \ref{thm:PA_upper_bound}}
\begin{proof}
    The proof follows the same argument as that of Theorem~\ref{thm:PA_joint}, and is therefore omitted.
\end{proof}

\subsection{Eigenvector–based results (Section~\ref{sec:EC})}

\subsubsection{Proof of Theorem~\ref{thm:evec-infty-perturb}}

\begin{proof}
\textbf{Proof sketch.} 
Fix $l\in[n]$ and let $\tilde{\bx}^{(l)}$ be the leading eigenvector of the leave-one-out matrix $\Yb^{(l)}$.
Using
\[
|x_l-\tilde x_l|\le |x_l-\tilde x_l^{(l)}|+\|\tilde{\bx}^{(l)}-\tilde{\bx}\|_2,
\]
and then maximizing over $l$, it suffices to control the two terms on the right-hand side.

\paragraph{Step 1: Leave-one-out decomposition.}
We begin by introducing the notation used throughout the proof. Let the error matrix be
\[
\Eb = \Yb - \Ab,
\]
which represents the perturbation of $\Ab$ due to edge noise. 

For any $l \in [n]$, define $\Eb^{(l)}$ as the modified error matrix obtained by
setting the $l$-th row and $l$-th column of $\Eb$ to zero, that is,
\[
E_{ij}^{(l)} =
\begin{cases}
E_{ij}, & \text{if } i \neq l \text{ and } j \neq l, \\
0,      & \text{otherwise}.
\end{cases}
\]
Let
\[
\Yb^{(l)} := \Ab + \Eb^{(l)}
\]
be the corresponding leave-one-out version of $\Yb$, and denote by
$\tilde{\bx}^{(l)}$ the leading eigenvector of $\Yb^{(l)}$ (with
eigenvalue $\tilde{\lambda}_1^{(l)}$), normalized in the same way as
$\tilde{\bx}$. 

By definition of the $\ell_\infty$-norm,
\[
\|\bx - \tilde{\bx}\|_\infty
= \max_{l \in [n]} |x_l - \tilde{x}_l|.
\]
For each $l \in [n]$, we insert the leave-one-out eigenvector
$\tilde{\bx}^{(l)}$ and apply the triangle inequality:
\begin{align*}
|x_l - \tilde{x}_l|
&\le \big|x_l - \tilde{x}_l^{(l)}\big|
     + \big|\tilde{x}_l^{(l)} - \tilde{x}_l\big| \\
&\le \big|x_l - \tilde{x}_l^{(l)}\big|
     + \|\tilde{\bx}^{(l)} - \tilde{\bx}\|_2.
\end{align*}

Taking the maximum over $l \in [n]$ yields
\[
\|\bx - \tilde{\bx}\|_\infty
\le \max_{l \in [n]} \big|x_l - \tilde{x}_l^{(l)}\big|
  + \max_{l \in [n]} \|\tilde{\bx}^{(l)} - \tilde{\bx}\|_2.
\]
Thus, it suffices to bound the two quantities
\[
\max_{l \in [n]} \big|x_l - \tilde{x}_l^{(l)}\big|
\quad \text{and} \quad
\max_{l \in [n]} \|\tilde{\bx}^{(l)} - \tilde{\bx}\|_2,
\]
which will be handled in Steps~2 and~3 below.

\paragraph{Step 2: Bounding $\|\tilde{\bx}^{(l)} - \tilde{\bx}\|_2$.}
We apply an advanced leave-one-out technique. We view $\Yb$ as a perturbation of $\Yb^{(l)}$:
\[
\Yb = \Yb^{(l)} + \Eb^{(-l)},
\]
where $\Eb^{(-l)}$ retains only the $l$-th row and column of $\Eb$ and
zeros out all others. By construction, $\Eb^{(-l)}$ and
$\tilde{\bx}^{(l)}$ are independent. 

Let $\tilde{\lambda}_1^{(l)}$ and $\tilde{\lambda}_2^{(l)}$ denote the two
largest eigenvalues of $\Yb^{(l)}$.
Applying the Davis-Kahan sin-$\Theta$ theorem (e.g., \citet[Corollary~2.8]{chen2021spectral}),
\begin{equation}
\|\tilde{\bx}^{(l)} - \tilde{\bx}\|_2
\;\le\;
\frac{2\|\Eb^{(-l)}\tilde{\bx}^{(l)}\|_2}
     {\tilde{\lambda}_1^{(l)} - \tilde{\lambda}_2^{(l)}}.
\label{2.1}    
\end{equation}
Therefore we need to bound:
(i) $\|\Eb^{(-l)}\tilde{\bx}^{(l)}\|_2$ and
(ii) $\tilde{\lambda}_1^{(l)} - \tilde{\lambda}_2^{(l)}$ from below.
We control these two terms separately.

\paragraph{Step 2.1: Bounding $\|\Eb^{(-l)}\tilde{\bx}^{(l)}\|_2$.}
The tightness of the bound on $\|\Eb^{(-l)}\tilde{\bx}^{(l)}\|_2$ comes from
the leave-one-out construction: by design, $\Eb^{(-l)}$ depends only on the
$l$-th row and column of $\Eb$, while $\tilde{\bx}^{(l)}$ is the leading
eigenvector of $\Yb^{(l)}$ and therefore does not depend on that row and
column. Hence $\Eb^{(-l)}$ and $\tilde{\bx}^{(l)}$ are independent.

By definition,
\begin{align*}
    \Eb^{(-l)}\tilde{\bx}^{(l)}
    &= \big(\Yb - \Yb^{(l)}\big)\tilde{\bx}^{(l)} \\
    &= (\Eb_{l,\cdot} \tilde{\bx}^{(l)})\,\be_l
       + \tilde x^{(l)}_l\big(\Eb_{\cdot,l} - E_{ll}\be_l\big),
\end{align*}
where $\be_l\in\mathbb{R}^n$ is the $l$-th standard basis vector (its $l$-th
component is $1$ and all others are $0$), $\Eb_{l,\cdot}$ denotes the $l$-th
row of $\Eb$, $\Eb_{\cdot,l}$ denotes the $l$-th column of $\Eb$, and $E_{ll}$
is the $(l,l)$-entry of $\Eb$.

Under our noisy network model, we can write the entries of $\Eb$ as
\[
E_{ij} = A_{ij}\big(\mathbbm{1}\{\varepsilon_{ij} = 0\} - 1\big)
        + \mathbbm{1}\{\varepsilon_{ij} = 1\},
\]
where $\{\varepsilon_{ij}\}_{1\le i<j\le n}$ are independent random variables
with
\[
\mathbb{P}(\varepsilon_{ij}=1)=\alpha_n,\quad
\mathbb{P}(\varepsilon_{ij}=0)=1-\alpha_n-\beta_n,\quad
\mathbb{P}(\varepsilon_{ij}=-1)=\beta_n,
\]
and $\alpha_n,\beta_n\ge 0$, $\alpha_n+\beta_n<1$, as in \citet[Assumption~1]{chang2022estimation}.

\paragraph{(a) Concentration for $\big|\Eb_{l,\cdot}\tilde{\bx}^{(l)}\big|$.}
Since $\{E_{lj}\}_{j\neq l}$ are independent, and $\Eb_{l,\cdot}$ and $\tilde{\bx}^{(l)}$ are independent
(by the leave-one-out construction), we may view $\{\tilde{x}^{(l)}_j\}$ as deterministic when conditioning on $\tilde{\bx}^{(l)}$. Thus we apply Bernstein’s inequality to the sum
\[
\sum_{j\neq l} E_{lj}\tilde{x}^{(l)}_j.
\]
Using $|E_{lj}| \le 1$ and
\[
\Var\!\left(\sum_{j\neq l}E_{lj}\tilde{x}^{(l)}_j\right)
\le
\sum_{j\neq l}\Var(E_{lj})(\tilde{x}^{(l)}_j)^2
\le \big[(\alpha_n+\beta_n) - (\alpha_n-\beta_n)^2\big]
\sum_{j\neq l}(\tilde{x}^{(l)}_j)^2
\le (\alpha_n+\beta_n) - (\alpha_n-\beta_n)^2,
\]
Bernstein’s inequality gives, for any $\varepsilon>0$,
\[
\mathbb{P}\!\left(
\left|\sum_{j\neq l}E_{lj}\tilde{x}^{(l)}_j
- \mathbb{E}\!\left[\sum_{j\neq l}E_{lj}\tilde{x}^{(l)}_j\right]
\right| > \varepsilon
\right)
\le
2\exp\!\left(
-\frac{\varepsilon^2/2}{
[(\alpha_n+\beta_n)-(\alpha_n-\beta_n)^2]+\varepsilon/3}
\right).
\]
Choosing
\[
\varepsilon = 5\sqrt{\big[(\alpha_n+\beta_n)-(\alpha_n-\beta_n)^2\big]\log n}
\]
and using
\[
\left|\mathbb{E}\!\left[\sum_{j\neq l}E_{lj}\tilde{x}^{(l)}_j\right]\right|
\le (\alpha_n+\beta_n)\sqrt{n},
\]
we obtain, for each fixed $l$,
\[
\big|\Eb_{l,\cdot}\tilde{\bx}^{(l)}\big|
\le
(\alpha_n+\beta_n)\sqrt{n}
+
5\sqrt{\big[(\alpha_n+\beta_n)-(\alpha_n-\beta_n)^2\big]\log n}
\]
with probability at least $1-n^{-10}$. A union bound over $l\in[n]$ then yields
\[
\max_{l\in[n]}\big|\Eb_{l,\cdot}\tilde{\bx}^{(l)}\big|
\le
(\alpha_n+\beta_n)\sqrt{n}
+
5\sqrt{\big[(\alpha_n+\beta_n)-(\alpha_n-\beta_n)^2\big]\log n}
\qquad\text{with probability } \ge 1-n^{-9}.
\]

\paragraph{(b) Concentration for $\|\Eb_{\cdot,l} - E_{ll}\be_l\|_2$.}
Since removing a coordinate can only decrease the $\ell_2$-norm,
\[
\|\Eb_{\cdot,l} - E_{ll}\be_l\|_2
\le \|\Eb_{\cdot,l}\|_2
= \Big(\sum_{j\neq l} E_{jl}^2\Big)^{1/2}.
\]
Under the noisy-edge model, $E_{jl}^2 \in \{0,1\}$ for $j\neq l$, and
\[
\mathbb{P}(E_{jl}^2 = 1) = \mathbb{P}(E_{jl} \neq 0) \le \alpha_n+\beta_n.
\]
Hence $\{E_{jl}^2\}_{j\neq l}$ are independent Bernoulli random variables with
\[
\mathbb{E}\!\left[\sum_{j\neq l} E_{jl}^2\right]
= \sum_{j\neq l}\mathbb{E}(E_{jl}^2)
\le n(\alpha_n+\beta_n).
\]
Applying Bernstein’s inequality to the sum
$\sum_{j\neq l}E_{jl}^2$ yields
\[
\mathbb{P}\!\left(
\sum_{j\neq l}E_{jl}^2
> 2n(\alpha_n+\beta_n) + \log n
\right)
\le n^{-10},
\]
for all sufficiently large $n$. Therefore, with probability at least $1-n^{-10}$,
\[
\|\Eb_{\cdot,l} - E_{ll}\be_l\|_2
\le
\sqrt{2n(\alpha_n+\beta_n) + \log n}.
\]
A union bound over $l \in [n]$ then gives
\[
\max_{l\in[n]}\|\Eb_{\cdot,l} - E_{ll}\be_l\|_2
\le
\sqrt{2n(\alpha_n+\beta_n) + \log n}
\qquad\text{with probability }\ge 1-n^{-9}.
\]

Combining (a)–(b), we conclude that with probability at least \(1-2n^{-9}\), for every \(l\in[n]\),
\[
\big\|\Eb^{(-l)}\tilde{\bx}^{(l)}\big\|
\le
(\alpha_n+\beta_n)\sqrt{n}
+
5\sqrt{\big[(\alpha_n+\beta_n)-(\alpha_n-\beta_n)^2\big]\log n}
+
\sqrt{2n(\alpha_n+\beta_n)+\log n}
\left(
\|\tilde{\bx}\|_{\infty}
+
\|\tilde{\bx}-\tilde{\bx}^{(l)}\|_2
\right).
\]
Consequently,
\begin{align*}
    & \max_{l\in[n]}\big\|\Eb^{(-l)}\tilde{\bx}^{(l)}\big\| \\
\le &
(\alpha_n+\beta_n)\sqrt{n}
+
5\sqrt{\big[(\alpha_n+\beta_n)-(\alpha_n-\beta_n)^2\big]\log n}
+
\sqrt{2n(\alpha_n+\beta_n)+\log n}
\left(
\|\tilde{\bx}\|_{\infty}
+
\max_{l\in[n]}\|\tilde{\bx}-\tilde{\bx}^{(l)}\|_2
\right).
\end{align*}

\paragraph{Step 2.2: Spectral gap of $\Yb^{(l)}$.}
To lower bound the spectral gap of $\Yb^{(l)}$, we relate its eigenvalues 
to those of $\Ab$ using Weyl’s inequality:
\[
|\lambda_i - \tilde{\lambda}_i^{(l)}|
\le \|\Eb^{(l)}\| \le \|\Eb\|,
\qquad i=1,2,
\]
where $\lambda_i$ and $\tilde{\lambda}_i^{(l)}$ are the $i$-th largest eigenvalues 
of $\Ab$ and $\Yb^{(l)}$, respectively.  
By Lemma~\ref{lem:E_concentration}, with probability at least $1-n^{-10}$,
\[
\|\Eb\| \le B_3,
\]
where $B_3$ is defined in~\eqref{eq:def-B123}.  
Therefore,
\[
\tilde{\lambda}_1^{(l)} - \tilde{\lambda}_2^{(l)}
\ge (\lambda_1 - B_3) - (\lambda_2 + B_3)
= \lambda_1 - \lambda_2 - 2B_3.
\]

Combining the bound from Step~2.1 with \eqref{2.1} and the estimate
\[
\tilde{\lambda}_1^{(l)} - \tilde{\lambda}_2^{(l)}
\ge \lambda_1 - \lambda_2 - 2B_3,
\]
we obtain
\[
\|\tilde{\bx}^{(l)}-\tilde{\bx}\|_2
\le
\frac{
2B_1+2B_2\|\tilde{\bx}\|_\infty
+2B_2\|\tilde{\bx}^{(l)}-\tilde{\bx}\|_2
}{
\lambda_1-\lambda_2-2B_3
}.
\]
Rearranging yields
\begin{equation}\label{eq:evec_bound}
\|\tilde{\bx}^{(l)} - \tilde{\bx}\|_2
\le
\frac{
    2B_1 + 2B_2 \|\tilde{\bx}\|_{\infty}
}{
    \lambda_1 - \lambda_2 - 2B_3 - 2B_2
},
\end{equation}
with probability at least \(1-Cn^{-9}\) for some absolute constant \(C>0\), provided that
\[
\lambda_1 - \lambda_2 > 2B_3 + 2B_2.
\]

This completes the control of $\|\tilde{\bx}^{(l)} - \tilde{\bx}\|_2$.

\paragraph{Step 3: Bounding $\max_{l\in[n]} |x_l - \tilde{x}_l^{(l)}|$.}
We now control the first term in the decomposition from Step~1.  
Recall that $\tilde{\bx}^{(l)}$ satisfies the eigen-equation
\[
\Yb^{(l)}\tilde{\bx}^{(l)} = \tilde{\lambda}_1^{(l)} \tilde{\bx}^{(l)}.
\]
In particular, its $l$-th coordinate obeys
\[
\tilde{\lambda}_1^{(l)} \tilde{x}_l^{(l)}
= \Yb^{(l)}_{l,\cdot}\tilde{\bx}^{(l)}
= \Ab_{l,\cdot}\tilde{\bx}^{(l)},
\qquad\Rightarrow\qquad
\tilde{x}_l^{(l)}
= \frac{1}{\tilde{\lambda}_1^{(l)}}\,\Ab_{l,\cdot}\tilde{\bx}^{(l)}.
\]
Similarly, since $\Ab\bx = \lambda_1\bx$,
\[
x_l = \frac{1}{\lambda_1}\,\Ab_{l,\cdot}\bx.
\]

Subtracting the above displays yields
\[
x_l - \tilde{x}_l^{(l)}
= \frac{1}{\lambda_1}\Ab_{l,\cdot}(\bx - \tilde{\bx}^{(l)})
\;-\;
\frac{\lambda_1 - \tilde{\lambda}_1^{(l)}}{\lambda_1\tilde{\lambda}_1^{(l)}}\,
\Ab_{l,\cdot}\tilde{\bx}^{(l)}
=: T_{1,l} + T_{2,l}.
\]

\smallskip
\noindent
\textbf{Bound on $T_{1,l}$.}
We insert the identity $\Ab_{l,\cdot} = \lambda_1 x_l \bx^\top + (\Ab_{l,\cdot} - \lambda_1 x_l \bx^\top)$:
\begin{align*}
T_{1,l}
&=
\frac{1}{\lambda_1}
\big(\Ab_{l,\cdot}-\lambda_1 x_l\bx^\top\big)(\bx - \tilde{\bx}^{(l)})
+ x_l \bx^\top(\bx - \tilde{\bx}^{(l)}).
\end{align*}
Using $\|\Ab_{l,\cdot}-\lambda_1 x_l\bx^\top\|\le \lambda_2$ and $\|\bx\|_2=1$, we obtain
\[
|T_{1,l|}
\le
\Big(\frac{\lambda_2}{\lambda_1}+\|x\|_\infty\Big)
\|\bx - \tilde{\bx}^{(l)}\|.
\]
Then Davis–Kahan and $\|\Eb^{(l)}\|\le B_3$ give
\[
\|\bx - \tilde{\bx}^{(l)}\|
\le \frac{2\|\Eb^{(l)}\|}{\lambda_1 - \lambda_2}
\le \frac{2B_3}{\lambda_1 - \lambda_2}.
\]
Thus
\[
|T_{1,l|}
\le
\Big(\frac{\lambda_2}{\lambda_1}+\|x\|_\infty\Big)
\frac{2B_3}{\lambda_1 - \lambda_2}.
\]

\smallskip
\noindent
\textbf{Bound on $T_{2,l}$.}
We use
\[
\left\|\frac{1}{\lambda_1}\Ab_{l,\cdot}\right\|
\le \frac{\lambda_2}{\lambda_1} + \|x\|_\infty,
\qquad
|\lambda_1 - \tilde{\lambda}_1^{(l)}|
\le \|\Eb^{(l)}\| \le B_3,
\]
which implies $\tilde{\lambda}_1^{(l)} \ge \lambda_1 - B_3$ whenever $\lambda_1>B_3$.  
Hence
\[
|T_{2,l|}
\le \frac{B_3}{\lambda_1 - B_3}
\Big(\frac{\lambda_2}{\lambda_1} + \|x\|_\infty\Big).
\]

\smallskip
\noindent
\textbf{Collecting terms.}
Combining the bounds on $T_{1,l}$ and $T_{2,l}$ yields
\[
|x_l - \tilde{x}_l^{(l)}|
\le
\Big(\frac{\lambda_2}{\lambda_1}+\|x\|_\infty\Big)
\left(
\frac{2B_3}{\lambda_1 - \lambda_2}
+
\frac{B_3}{\lambda_1 - B_3}
\right),
\qquad l\in[n].
\]
Taking the maximum over $l$ proves the desired bound.

\paragraph{Step 4: Final combination.}
Substituting the bounds from Step~2 and Step~3 into the leave-one-out
decomposition, we obtain
\begin{align*}
    \|\bx - \tilde{\bx}\|_{\infty} 
    &\le \max_{l\in[n]} \left|x_{l} - \tilde{x}_{l}^{(l)} \right|
       + \max_{l\in[n]}\|\tilde{\bx}^{(l)} - \tilde{\bx}\|_2 \\
    &\le \left( \frac{\lambda_2 }{\lambda_1} + \|\bx\|_{\infty} \right)
           \left( \frac{2B_3}{\lambda_1 - \lambda_2}
                + \frac{B_3}{\lambda_1 - B_3} \right)
       + \frac{2B_1 + 2B_2 \|\tilde{\bx}\|_{\infty}}
              {\lambda_1 - \lambda_2 - 2B_3 - 2B_2}.
\end{align*}
Using $\|\tilde{\bx}\|_{\infty} \le \|\bx\|_{\infty}
     + \|\bx - \tilde{\bx}\|_{\infty}$, we have
\[
\|\bx - \tilde{\bx}\|_{\infty} 
\le \left( \frac{\lambda_2 }{\lambda_1} + \|\bx\|_{\infty} \right)
     \left( \frac{2B_3}{\lambda_1 - \lambda_2}
          + \frac{B_3}{\lambda_1 - B_3} \right)
     + \frac{2B_1 + 2B_2 \big(\|\bx\|_{\infty}
                           + \|\bx - \tilde{\bx}\|_{\infty}\big)}
            {\lambda_1 - \lambda_2 - 2B_3 - 2B_2}.
\]
Rearranging terms, and assuming
$\lambda_1 - \lambda_2 > 2B_3 + 4B_2$
so that all denominators are positive, gives
\begin{equation}\label{eq:entrywise_final_bound}
\begin{aligned}
\|\bx - \tilde{\bx}\|_{\infty}
&\le
\frac{\lambda_1 - \lambda_2 - 2B_3 - 2B_2}
     {\lambda_1 - \lambda_2 - 2B_3 - 4B_2}
\left(\frac{\lambda_2}{\lambda_1}
      + \|\bx\|_{\infty}\right)
\left(
    \frac{2B_3}{\lambda_1 - \lambda_2}
    + \frac{B_3}{\lambda_1 - B_3}
\right)
\\[4pt]
&\quad+
\frac{
    2B_1 + 2B_2\|\bx\|_{\infty}
}{
    \lambda_1 - \lambda_2 - 2B_3 - 4B_2
}.
\end{aligned}
\end{equation}
Thus the entrywise perturbation is controlled whenever the eigen-gap $\lambda_1 - \lambda_2$ dominates the noise level encoded by $B_1,B_2,B_3$. This completes the proof.

\end{proof}

Define the noise matrix $\Eb = \Yb - \Ab$. Under the noisy edge model
\eqref{eqn:noise_model}, the following bound holds.

\begin{lem}\label{lem:E_concentration}
Let $\|\cdot\|$ denote the spectral norm. Then, conditional on $\Ab$,
for all sufficiently large $n$,
\[
  \|\Eb\|
  \;\le\;
  5\sqrt{n(\alpha_n + \beta_n)}
  \;+\;
  \alpha_n n
  \;+\;
  (\alpha_n + \beta_n)\,\|\Ab\|
\]
with probability at least $1 - n^{-10}$.
\end{lem}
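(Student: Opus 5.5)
Conditional on $\Ab$, we split $\Eb$ into its conditional mean and a centered fluctuation, $\Eb = \bbE[\Eb\mid\Ab] + (\Eb - \bbE[\Eb\mid\Ab])$, and bound the spectral norm of each part separately. By \eqref{eqn:noise_model}, for $i\neq j$ we have $\bbE[E_{ij}\mid\Ab] = \alpha_n(1-A_{ij}) - \beta_n A_{ij} = \alpha_n - (\alpha_n+\beta_n)A_{ij}$. The diagonal entries are zero. Hence
\[
\bbE[\Eb\mid\Ab] = \alpha_n(\bone\bone^\top - \Ib) - (\alpha_n+\beta_n)\Ab .
\]
The triangle inequality, together with $\|\bone\bone^\top-\Ib\|\le n-1\le n$, gives the deterministic part $\alpha_n n + (\alpha_n+\beta_n)\|\Ab\|$. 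This accounts for the last two terms of the bound exactly.

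For the centered part $\Zb:=\Eb-\bbE[\Eb\mid\Ab]$, the entries above the diagonal are independent conditional on $\Ab$, mean zero, and bounded by $1$ in absolute value. Each has variance at most $\max\{\alpha_n(1-\alpha_n),\beta_n(1-\beta_n)\}\le \alpha_n+\beta_n$. We would apply a standard spectral-norm inequality for symmetric random matrices with independent bounded entries, namely the Bandeira--van Handel bound: $\|\Zb\|\le (2+\eta)\sigma + C_\eta\sqrt{\log n}\,K$ with probability at least $1-n^{-10}$, where
\[
\sigma^2=\max_i\sum_j \bbE Z_{ij}^2\le n(\alpha_n+\beta_n)
\]
and $K\le 1$. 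Alternatively, one could use the matrix Bernstein inequality, which gives $\sqrt{2\sigma^2\cdot 11\log n}+\tfrac{11}{3}\log n$, but this carries an extra $\sqrt{\log n}$ factor.

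Matching the constant $5\sqrt{n(\alpha_n+\beta_n)}$ without a $\log n$ term requires $n(\alpha_n+\beta_n)\gtrsim \log n$. In that regime $C\sqrt{\log n}\le \sqrt{n(\alpha_n+\beta_n)}$ for large $n$, so $(2+\eta)\sigma + C\sqrt{\log n}\le 5\sqrt{n(\alpha_n+\beta_n)}$. The phrase ``for all sufficiently large $n$'' is how we would absorb constants.

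\textbf{Main obstacle.} The hard step is the very sparse regime $n(\alpha_n+\beta_n)=o(\log n)$. There the fluctuation norm is governed by the largest row count, of order $\log n/\log\log n$, and is not of order $\sigma$. In that regime we would first try to absorb this term using a sparse-graph bound (for example, Latała-type estimates or Feige--Ofek-type arguments). If that fails, we would state the lemma under the implicit assumption $n(\alpha_n+\beta_n)\gtrsim\log n$, or simply add a $C\sqrt{\log n}$ term, which does not affect the later uses in Theorem~\ref{thm:evec-infty-perturb} up to constants. Combining the two parts by the triangle inequality then yields the claimed bound with probability at least $1-n^{-10}$.
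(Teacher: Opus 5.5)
Your argument is the paper's argument. The paper uses the same split $\Eb=\Zb+\Bb$ with $\Bb=\alpha_n(\Jb-\Ib)-(\alpha_n+\beta_n)\Ab$, the same bias bound $\alpha_n n+(\alpha_n+\beta_n)\|\Ab\|$, and a spectral-norm inequality for the centered part. Where the two differ is that the paper simply asserts $\|\Zb\|\le 5\sqrt{n(\alpha_n+\beta_n)}$ with probability $1-n^{-10}$, citing a ``matrix Bernstein'' inequality, and puts no condition on $n(\alpha_n+\beta_n)$. The caveat you raise is exactly the hole in that step. Matrix Bernstein carries the extra $\sqrt{\log n}$, and a Bandeira--van Handel bound carries an additive $C\sqrt{\log n}$. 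One small correction to your absorption step: to fit $C\sqrt{\log n}$ into the slack $(3-\eta)\sqrt{n(\alpha_n+\beta_n)}$, you need either $n(\alpha_n+\beta_n)\ge C_0\log n$ with $C_0$ large relative to $C$, or $n(\alpha_n+\beta_n)/\log n\to\infty$. Taking $n$ large does not help when the ratio $n(\alpha_n+\beta_n)/\log n$ is a fixed small constant.

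Your first fallback for the sparse regime (Lata{\l}a- or Feige--Ofek-type bounds) cannot succeed, because the statement is false there. Take $\Ab=\bzero$, $\beta_n=0$ and $\alpha_n=n^{-2}$.
\begin{itemize}
\item With probability $1-(1-n^{-2})^{\binom{n}{2}}\to 1-e^{-1/2}$, at least one edge is added.
\item On that event $\Eb$ is a nonzero symmetric $0/1$ matrix with zero diagonal, so $\|\Eb\|\ge 1$.
\item Meanwhile the right-hand side equals $5n^{-1/2}+n^{-1}\to 0$.
\end{itemize}
Similarly, $\alpha_n=1/n$ gives $\|\Eb\|\asymp\sqrt{\log n/\log\log n}$ against a right-hand side of at most $6$. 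So the lemma genuinely needs your second option: either assume $n(\alpha_n+\beta_n)\ge C_0\log n$, or add a $C\sqrt{\log n}$ term to the bound. The added term is the better repair. Theorem~\ref{thm:EC_consistency} imposes only upper bounds on $\alpha_n,\beta_n$, so it allows $n(\alpha_n+\beta_n)=o(\log n)$, where the assumption would fail. The extra term is also harmless downstream: $B_2$ already contains $\sqrt{\log n}$, and that proof only needs $B_1+B_2+B_3=o\bigl(n^{1/(2(b+2))}\bigr)$.
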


\begin{proof}
We work conditional on $\Ab$. Recall $\Eb=\Yb-\Ab$. Decompose
\[
\Eb
=\underbrace{\Yb-\bbE[\Yb\mid \Ab]}_{=: \Zb}
+\underbrace{\bbE[\Yb\mid \Ab]-\Ab}_{=: \Bb}.
\]
Hence, by the triangle inequality,
\begin{equation}\label{eq:triangle_E}
\|\Eb\|\le \|\Zb\|+\|\Bb\|.
\end{equation}

\medskip\noindent
\textbf{Step 1: Bound $\|\Bb\|$.}
Under~\eqref{eqn:noise_model}, for $i\neq j$,
\[
\bbE[Y_{ij}\mid A_{ij}=0]=\alpha_n,
\qquad
\bbE[Y_{ij}\mid A_{ij}=1]=1-\beta_n.
\]
Let $\Jb$ be the all-ones matrix and $\Ib$ the identity. Then
\begin{align*}
\bbE[\Yb\mid \Ab]
&=\alpha_n(\Jb-\Ib-\Ab)+(1-\beta_n)\Ab \\
&=\alpha_n(\Jb-\Ib)+(1-\alpha_n-\beta_n)\Ab,
\end{align*}
so
\[
\Bb:=\bbE[\Yb\mid \Ab]-\Ab=\alpha_n(\Jb-\Ib)-(\alpha_n+\beta_n)\Ab.
\]
Therefore,
\[
\|\Bb\|
\le \alpha_n\|\Jb-\Ib\|+(\alpha_n+\beta_n)\|\Ab\|.
\]
Since $\|\Jb-\Ib\|=n-1\le n$, we obtain
\begin{equation}\label{eq:bias_bound_simple}
\|\Bb\|\le \alpha_n n+(\alpha_n+\beta_n)\|\Ab\|.
\end{equation}

\medskip\noindent
\textbf{Step 2: Bound $\|\Zb\|$.}
Conditional on $\Ab$, the entries $\{Z_{ij}\}_{1\le i<j\le n}$ are independent,
mean-zero, and satisfy $|Z_{ij}|\le 1$. Moreover,
\[
\Var(Z_{ij}\mid \Ab)=\Var(Y_{ij}\mid \Ab)\le \alpha_n+\beta_n,
\]
since $\Var(Y_{ij}\mid A_{ij}=0)=\alpha_n(1-\alpha_n)\le \alpha_n$ and
$\Var(Y_{ij}\mid A_{ij}=1)=(1-\beta_n)\beta_n\le \beta_n$.
Hence for each $i$,
\[
\sum_{j=1}^n \bbE[Z_{ij}^2\mid \Ab]\le n(\alpha_n+\beta_n).
\]
A matrix Bernstein inequality for symmetric matrices with independent centered bounded
entries (e.g., Lemma~2.1 of \cite{chen2021spectral}) yields that, for all sufficiently large $n$,
\begin{equation}\label{eq:Z_bound_5}
\|\Zb\|\le 5\sqrt{n(\alpha_n+\beta_n)}
\qquad\text{with probability at least } 1-n^{-10}.
\end{equation}

\medskip\noindent
\textbf{Step 3: Combine.}
Combining \eqref{eq:triangle_E}, \eqref{eq:bias_bound_simple}, and \eqref{eq:Z_bound_5} gives that,
with probability at least $1-n^{-10}$,
\[
\|\Eb\|
\le 5\sqrt{n(\alpha_n+\beta_n)}
+\alpha_n n
+(\alpha_n+\beta_n)\|\Ab\|,
\]
as claimed.
\end{proof}

\subsubsection{Proof of Theorem~\ref{thm:EC_consistency}}
\begin{proof}
We apply Corollary~\ref{cor:topk-consistency} to the linear PA model.
It suffices to (i) control the perturbation level $\epsilon_n$, and
(ii) verify the eigengap condition $\Delta_1^\lambda > 2\epsilon_n$.

\paragraph{Step 1: Spectral scale under preferential attachment.}
From Lemma~\ref{lem:PA_limits} with \(k=2\), we have
\[
\left(
\frac{\lambda_1}{n^{1/(2(\delta+2))}},
\frac{\lambda_2}{n^{1/(2(\delta+2))}}
\right)
\convd
\left(\sqrt{\nu_1},\sqrt{\nu_2}\right),
\]
for some non-degenerate random variables \(\nu_1 \ge \nu_2 >0\).
In particular,
\[
\lambda_1=\Theta_p\!\left(n^{1/(2(\delta+2))}\right),
\qquad
\lambda_2=\Theta_p\!\left(n^{1/(2(\delta+2))}\right),
\]
and therefore
\[
\frac{\lambda_2}{\lambda_1}=\Theta_p(1).
\]
Moreover, if \(\bbP(\nu_1>\nu_2)=1\), then by the continuous mapping theorem,
\[
\frac{\lambda_1-\lambda_2}{n^{1/(2(\delta+2))}}
\convd
\sqrt{\nu_1}-\sqrt{\nu_2},
\]
and hence
\[
\lambda_1-\lambda_2=\Theta_p\!\left(n^{1/(2(\delta+2))}\right).
\]
\paragraph{Step 2: Control of the eigenvector norm.}
Under Assumption~\ref{asm:localization}, the principal eigenvector $\bx$ satisfies
$x_h \to c_0 > 0$.
Since $\|\bx\|_2 = 1$, this implies
\[
\|\bx\|_\infty \ge x_h = \Theta_p(1),
\qquad
\|\bx\|_\infty \le 1,
\]
and hence
\[
\|\bx\|_\infty = \Theta_p(1).
\]

\paragraph{Step 3: Perturbation bound.}
Recall the quantities $B_1,B_2,B_3$ defined in \eqref{eq:def-B123}.
Define the event
\[
\mathcal E_n
:=
\bigl\{
\lambda_1 - \lambda_2 - 2B_3 - 4B_2 > 0,
\;
\lambda_1 - B_3 > 0
\bigr\}.
\]
Under the rate conditions stated below, $\bbP(\mathcal E_n) \to 1$.

On the event $\mathcal E_n$, the perturbation bound in
Corollary~\ref{cor:topk-consistency} yields
\begin{align*}
\epsilon_n
&=
\frac{\lambda_1 - \lambda_2 - 2B_3 - 2B_2}
     {\lambda_1 - \lambda_2 - 2B_3 - 4B_2}
\Bigg(\frac{\lambda_2}{\lambda_1} + \|\bx\|_{\infty}\Bigg)
\Bigg(
    \frac{2B_3}{\lambda_1 - \lambda_2}
    + \frac{B_3}{\lambda_1 - B_3}
\Bigg)
+
\frac{
    2B_1 + 2B_2\|\bx\|_{\infty}
}{
    \lambda_1 - \lambda_2 - 2B_3 - 4B_2
}.
\end{align*}
Using $\lambda_2/\lambda_1 = \Theta_p(1)$, $\|\bx\|_\infty = \Theta_p(1)$, and
$\lambda_1 \asymp \lambda_1-\lambda_2 \asymp n^{1/(2(\delta+2))}$ in probability,
we obtain the simplified bound
\[
\epsilon_n
=
O_p\!\left(
\frac{B_1 + B_2 + B_3}{\lambda_1 - \lambda_2}
\right)
=
O_p\!\left(
\frac{B_1 + B_2 + B_3}{n^{1/(2(\delta+2))}}
\right).
\]

\paragraph{Step 4: Verification of the eigengap condition.}
A sufficient condition for exact recovery of the top-ranked node is
\[
\Delta_1^\lambda > 2\epsilon_n.
\]
Under Assumption~\ref{asm:hub-separation}, the centrality gap
\[
\Delta_1^\lambda := x_{(1)} - x_{(2)}
\]
satisfies \(\Delta_1^\lambda \ge c_1\) with probability \(1-o(1)\). It suffices that
\[
B_1 + B_2 + B_3 = o\!\left(n^{1/(2(b+2))}\right).
\]
From the definitions in \eqref{eq:def-B123} and the fact that
$\|\Ab\| = \Theta_p(n^{1/(2(b+2))})$, this holds provided
\[
\alpha_n \ll n^{-(b+3/2)/(b+2)},
\qquad
\beta_n \ll n^{-(b+1)/(b+2)}.
\]
Under these conditions, $\epsilon_n = o_p(1)$ and therefore
\[
\bbP(S_1 = \tilde S_1) = 1 - o(1).
\]

\paragraph{Failure for $k\ge2$.}
For $k\ge2$, the condition $\Delta_k^\lambda>2\epsilon_n$ typically fails in the PA model: the sub-leading eigenvector centralities exhibit vanishing (or highly fluctuating) gaps, even when $\epsilon_n=o_p(1)$. Hence Corollary~\ref{cor:topk-consistency} does not yield consistency for $\tilde S_k$, and in general one should not expect exact recovery of $S_k$ for fixed $k\ge2$.

\end{proof}

\section{Technical Lemmas and Concentration Bounds}
\label{sec:appendix-lemmas}
%%%%%%%%%%%%%%%%%%%%%%%%%%%%%%%%%%%%%%%%%%%%%%%%%%%%

In this section, we present several technical lemmas that are used in the proofs of the main results. Throughout the section, everything is conditional on $\Ab$. Recall for a fixed node $i \in [n]$, the noisy degree is $\tilde{d}_i := \sum_{j \ne i} Y_{ij}$, where each $Y_{ij}$ is generated from the noise model in~\eqref{eqn:noise_model}:
$$
Y_{ij} \mid A_{ij} \sim 
\begin{cases}
\operatorname{Bern}(1 - \beta_n), & \text{if } A_{ij} = 1, \\
\operatorname{Bern}(\alpha_n), & \text{if } A_{ij} = 0.
\end{cases}
$$
Define
$$\tilde{d}_i^s = \frac{1}{\sigma_{n,i}}(\tilde{d}_i - \mu_{n,i}), \quad i = 1,\ldots,k,$$
where $\mu_{n,i}$ and $\sigma_{n,i}$ are the mean and standard deviation of $\tilde{d}_i$, respectively with
\begin{align*}
\mu_{n,i} &= \bbE\left(\tilde{d}_i \right) = (n-1-d_{i}) \alpha_n  + d_{i} (1 - \beta_n), \\
\sigma_{n, i}^2 &= \operatorname{Var} \left(\tilde{d}_i \right) = (n-1-d_{i}) \alpha_n (1 - \alpha_n) + d_{i} \beta_n (1 - \beta_n).   
\end{align*}

\begin{lem}[Joint CLT for normalized noisy degrees]\label{lem:min_CLT}
Assume $m_n:=\min_{1\le i\le k}\sigma_{n,i}\to\infty$. 
The following joint and high-dimensional central limit results hold:  
\begin{enumerate}
    \item 
    If $k$ is fixed, 
    $$
    (\tilde{d}_1^s, \ldots, \tilde{d}_k^s) \convd \cN(\bzero, \bI_k),
    $$
    as $n\rightarrow\infty$ where $\bI_k$ is the $k \times k$ identity matrix. 
    \item 
    If $k = k_n$ satisfies $k_n = o(m_n^2)$, then for every deterministic 
    $\bt^{(n)} \in \mathbb{R}^{k_n}$ with $\|\bt^{(n)}\|_2 \to \tau$,
    \[
    \left\langle \bt^{(n)}, (\tilde d_1^s, \ldots, \tilde d_{k_n}^s) \right\rangle
    \ \convd \ \mathcal N(0, \tau^2).
    \]
    \item 
    Let $k = k_n$ satisfy $\log^7(k_n n) = o(m_n^2)$.
    Let $a_{n,i} = (c - \mu_{n,i})/\sigma_{n,i}$.
    Then
    \[
    \bbP\!\left( \min_{i \le k_n} \tilde d_i^s < c \right)
    =
    \bbP\!\left( \exists i \in [k_n] : Z_i < a_{n,i} \right)
    + r_n,
    \]
    where $\Zb = (Z_1,\ldots,Z_{k_n}) \sim \mathcal N(\bzero, I_{k_n})$ and
    \[
    r_n =
    \begin{cases}
    O(m_n^{-1}), & k_n \equiv k \text{ fixed},\\[4pt]
    O\!\left( \left( \dfrac{\log^7(k_n n)}{m_n^2} \right)^{1/6} \right),
    & k_n \to \infty.
    \end{cases}
    \]
\end{enumerate}

\end{lem}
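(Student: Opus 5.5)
The plan is to exploit the fact that, conditional on $\Ab$, the noisy degrees are \emph{not} independent: $Y_{ij}$ enters both $\tilde d_i$ and $\tilde d_j$. So the approach is to separate the shared edges from the rest. For $i\le k_n$ write
\[
\tilde d_i=\sum_{j>k_n}Y_{ij}+\sum_{j\le k_n,\,j\ne i}Y_{ij}=:U_i+V_i .
\]
Given $\Ab$, the $U_i$ are independent sums of independent Bernoullis with variance $\sigma_{n,i}^2-O(k_n)$. The overlap terms $V_i$ contribute variance at most $k_n$ and covariance $\Var(Y_{ij})\le 1/4$ per pair.

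For part 1, with $k$ fixed, $V_i$ is bounded by $k$. Hence $V_i/\sigma_{n,i}\to0$ almost surely, while $(U_i-\bbE U_i)/\sigma_{n,i}$ are independent. Each is asymptotically standard normal by Lindeberg--Feller, which applies because $\sigma_{n,i}\to\infty$ and the summands are bounded. Independence gives joint convergence of the $U$-parts to $\cN(\bzero,\bI_k)$, and Slutsky absorbs the $V$-parts.

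For part 2, write $\langle \bt^{(n)},\tilde d^s\rangle=\sum_{i<j}Y_{ij}$-centred terms with coefficients
\[
c_{ij}=t_i/\sigma_{n,i}+t_j/\sigma_{n,j},
\]
where $t_i=0$ for $i>k_n$. This is a sum of independent bounded variables, so one applies Lyapunov's condition directly.
\begin{itemize}
\item \emph{Variance.} The variance equals $\sum_i t_i^2$ plus cross terms $2\sum_{i<j\le k_n}t_it_j\Var(Y_{ij})/(\sigma_{n,i}\sigma_{n,j})$. By Cauchy--Schwarz the cross terms are at most $\|\bt\|_1^2/(4m_n^2)\le k_n\|\bt\|_2^2/(4m_n^2)\to0$, using $k_n=o(m_n^2)$. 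So the variance tends to $\tau^2$.
\item \emph{Lyapunov.} The third absolute moment sum is bounded by $\max|c_{ij}|$ times the variance. Since $\max|c_{ij}|\le 2\|\bt\|_\infty/m_n\to0$, the Lyapunov ratio vanishes.
\end{itemize}

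Part 3 is the main obstacle: a Gaussian approximation of the probability of a lower orthant in growing dimension. The plan is to invoke the high-dimensional CLT over hyperrectangles of Chernozhukov--Chetverikov--Kato (2017), in the Bernoulli-sum version.
\begin{enumerate}
\item Express $(\tilde d_i^s)_{i\le k_n}$ as a normalized sum over edges $\{i,j\}$ of independent mean-zero vectors $X_{ij}\in\bbR^{k_n}$. Each $X_{ij}$ has at most two nonzero coordinates, bounded by $1/m_n$.
\item Reindexed as a $\sqrt N$-normalized sum with $N\asymp nk_n$ summands, the entries have $B_n\asymp \sqrt{n}/m_n$ in the CCK notation, after rescaling each coordinate's variance to one.
\item The CCK bound $\left(B_n^2\log^7(pN)/N\right)^{1/6}$ then gives $\left(\log^7(k_nn)/m_n^2\right)^{1/6}$, uniformly over rectangles $\{x:\min_i x_i\ge a_{n,i}\}$.
\end{enumerate}
The CCK bound compares with a Gaussian having the true covariance $\Sigma_n$, which has off-diagonals $O(1/m_n^2)$ rather than $\bI$. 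So one then applies the Gaussian comparison inequality of the same authors, which gives a bound of order $\left(\|\Sigma_n-\bI\|_{\max}\log^2 k_n\right)^{1/3}=(\log^2k_n/m_n^2)^{1/3}$; this is dominated by the stated rate. The event $\{\min_i\tilde d_i^s<c\}$ is the complement of such a rectangle, so the approximation transfers with the same error $r_n$.

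For fixed $k$, the rate $O(m_n^{-1})$ follows instead from the multivariate Berry--Esseen theorem of Bentkus over convex sets, using the same edge decomposition and third-moment bound $\sum\bbE\|X_{ij}\|^3\lesssim 1/m_n$.

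The delicate points are the careful verification of the CCK moment conditions, in particular the lower bound on the coordinate variances and the treatment of the non-identity covariance.
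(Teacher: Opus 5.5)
Your proposal is correct, and parts 2 and 3 follow the paper's own route. In part 2 you use the same edge-indexed sum of independent bounded terms with coefficients $t_i/\sigma_{n,i}+t_j/\sigma_{n,j}$, and the same $O(k_n\|\bt\|_2^2/m_n^2)$ control of the cross terms; you check Lyapunov where the paper checks Lindeberg via $\max|Z_{ij}|\le 2\|\bt\|_2/m_n$. In part 3 you use, as the paper does, Proposition~2.1 of Chernozhukov, Chetverikov and Kato, then their Gaussian comparison bound of order $(\log^2 k_n/m_n^2)^{1/3}$, and a multivariate Berry--Esseen bound for fixed $k$.

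The genuine difference is in part 1. The paper applies Cram\'er--Wold to the full edge sum $\bt^\top\tilde\bd^s=\sum_{i<j}Z_{ij}$, checks Lindeberg, and computes the variance. It shows that the covariance contributions $t_rt_s\Var(W_{rs})/(\sigma_{n,r}\sigma_{n,s})$ are $O(m_n^{-2})$. You instead peel off the at most $\binom{k}{2}$ edges shared within $[k]$, so the remaining parts $U_i$ are exactly independent given $\Ab$. Joint convergence then follows from $k$ one-dimensional Lindeberg--Feller CLTs, independence, and Slutsky, since $|V_i-\bbE V_i|\le k=o(\sigma_{n,i})$. This makes the independence of the limiting coordinates transparent and needs no covariance computation. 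However, it leans on $k$ being fixed, whereas the paper's single edge decomposition is reused unchanged in parts 2 and 3.

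There are two small repairs needed in part 3.
\begin{enumerate}
\item With $N\asymp nk_n$ summands (the edges incident to $[k_n]$), the coordinates of $\sqrt N X_{ij}$ can be of size $\sqrt N/m_n$. The normalized third- and fourth-moment averages in (M.2) are of matching order. So (M.2) and (E.1) force $B_N\asymp\sqrt{nk_n}/m_n$, not $\sqrt n/m_n$. Your final rate is still right, because only $B_N^2/N=m_n^{-2}$ enters; this is also why the paper can pad to $N=\binom n2$ summands at no cost. Plugging in your stated $B_n$ literally would instead give a spuriously better rate.
\item Bentkus's bound assumes identity covariance. For fixed $k$, apply it to $\Sigma_n^{-1/2}\tilde\bd^s$, which is fine since convex sets are preserved under linear maps. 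Then compare $\cN(\bzero,\Sigma_n)$ with $\cN(\bzero,\bI_k)$ in total variation, which costs only $O(m_n^{-2})$. The paper glosses over this step as well.
\end{enumerate}
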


\begin{proof}
{\bf Proof of Conclusion (1)}

The proof proceeds by rewriting the normalized noisy degree as sum of independent vectors and applying Lindeberg-Feller central limit theorem. 

{\bf Step 1: Reduction to one-dimensional convergence}

By the Cramér-Wold theorem, it is sufficient to show that for any fixed $\bt = (t_1, \ldots, t_k) \in \mathbb{R}^k$,
\begin{equation}\label{eqn:1940}
    \bt^\top \tilde{\bd}^s \convd \mathcal{N}(0, \|\bt\|_2^2).
\end{equation}

{\bf Step 2: Construction of independent vectors}

We construct auxiliary vectors $\Xb_{ij} \in \bbR^k$ so that $\bt^\top \tilde{\bd}^s$ can be expressed as the the sum of independent terms. 
Denote $W_{ij} = Y_{ij} - \bbE(Y_{ij})$. 

For $r\in [k]$ and $i,j\in [n]$, we define 
$$
(X_{ij})_r = 
\begin{cases}
W_{rj}/\sigma_{n,r}, & i = r \\
W_{ir}/\sigma_{n,r}, & j= r \\
0,  & \text{otherwise}
\end{cases}
$$
Therefore, the normalized noisy degree can be written as
$$
\tilde \bd^s = (\tilde d_1^s,\ldots, \tilde d_k^s) = \sum_{i <j} \Xb_{ij}.
$$
Consequently, 
$$
 \bt^\top \tilde{\bd}^s = \sum_{i<j} \bt^\top \Xb_{ij} =:  \sum_{i<j} Z_{ij}. 
$$
By construction, the collection ${\Xb_{ij} : i<j}$ is mutually independent.

{\bf Step 3: Verification of the Lindeberg condition}

Each $Z_{ij}$ is centered. Since 
$$
|Z_{ij}|
= \left|\frac{t_i}{\sigma_{n,i}}W_{ij}\bone\{i\le k\}+\frac{t_j}{\sigma_{n,j}}W_{ij}\bone\{j\le k\}\right|
\le \frac{|t_i|}{\sigma_{n,i}}\bone\{i\le k\}+\frac{|t_j|}{\sigma_{n,j}}\bone\{j\le k\}
\le \frac{2\|\bt\|_2}{m_n}.
$$
and $m_n \rightarrow \infty$, it follows that
$$
\sup_{i<j} |Z_{ij}| \rightarrow 0,
$$
and hence the Lindeberg condition holds. 

{\bf Step 4: Identification of the asymptotic variance}

Since $\{Z_{ij}\}_{i<j}$ are independent,
\[
\Var\!\left(\sum_{i<j} Z_{ij}\right)
= \sum_{i<j} \Var(Z_{ij}).
\]
For a given pair $(i,j)$,
\[
Z_{ij}
= \frac{t_i}{\sigma_{n,i}} W_{ij}\,\bone\{i\le k\}
  + \frac{t_j}{\sigma_{n,j}} W_{ij}\,\bone\{j\le k\}.
\]
Thus, 
\[
\Var(Z_{ij})
= \Var(W_{ij})
\left(\frac{t_i}{\sigma_{n,i}}+\frac{t_j}{\sigma_{n,j}}\right)^2.
\]
Summing over all $(i,j)$ yields
\[
\sum_{i<j} \Var(Z_{ij})
= \sum_{r=1}^k \frac{t_r^2}{\sigma_{n,r}^2}\sum_{j\neq r}\Var(W_{rj})
+ 2\sum_{1\le r<s\le k}\frac{t_r t_s}{\sigma_{n,r}\sigma_{n,s}}\Var(W_{rs}).
\]
Since $\sigma_{n,r}^2=\sum_{j\neq r}\Var(W_{rj})$, so the first term equals
$\sum_{r=1}^k t_r^2$. Since $\Var(W_{rs})\le C$ and $m_n=\min_{r\le k}\sigma_{n,r}\to\infty$,
the second term is $O(m_n^{-2})$ and therefore vanishes. Hence,
\[
\Var\!\left(\sum_{i<j} Z_{ij}\right)\to \|\bt\|_2^2.
\]

{\bf Step 5: Application of Lindeberg--Feller CLT}

The triangular array $\{Z_{ij}\}$ is independent, centered, satisfies the Lindeberg condition, and has total variance converging to $\|\bt\|_2^2$. Hence, by the Lindeberg-Feller CLT, \eqref{eqn:1940} holds for every fixed $\bt$.

\noindent{\bf Proof of Conclusion (2).}
The argument follows the proof of Conclusion~(1) with $\bt$ replaced by
$\bt^{(n)}\in\mathbb R^{k_n}$. It suffices to show that for every deterministic
$\bt^{(n)}\in\mathbb R^{k_n}$ with $\|\bt^{(n)}\|_2\to\tau$,
\[
\langle \bt^{(n)}, \tilde\bd^s\rangle \ \convd \ \mathcal N(0,\tau^2),
\]
where $\tilde\bd^s = (\tilde d_1^s,\ldots,\tilde d_{k_n}^s)$.

As in Conclusion~(1), we have the decomposition
\[
(\bt^{(n)})^\top \tilde{\bd}^s
= \sum_{i<j} (\bt^{(n)})^\top \Xb_{ij}
=: \sum_{i<j} Z_{ij}^{(n)},
\]
which forms an independent, centered triangular array. Moreover,
\[
\max_{i<j} |Z_{ij}^{(n)}|
\le \frac{2\|\bt^{(n)}\|_2}{m_n} \to 0,
\]
so the Lindeberg condition holds.

For the variance, the argument of Step~4 in Conclusion~(1) yields
\[
\Var\!\left(\sum_{i<j} Z_{ij}^{(n)}\right)
= \|\bt^{(n)}\|_2^2 + O\!\left(\frac{k_n\|\bt^{(n)}\|_2^2}{m_n^2}\right).
\]
Since $\|\bt^{(n)}\|_2\to\tau$ and $k_n=o(m_n^2)$, the second term is $o(1)$, and
hence
\[
\Var\!\left(\sum_{i<j} Z_{ij}^{(n)}\right)\to \tau^2.
\]
The conclusion follows from the Lindeberg--Feller CLT.

{\bf Proof of Conclusion (3)}

We prove conclusion (3) using high-dimensional Gaussian approximation and Gaussian comparison. 

When $k$ is fixed, the same conclusion follows from a multivariate Berry--Esseen bound
(applied to $(\tilde d_1^s,\ldots,\tilde d_k^s)$), yielding an error of order $O(m_n^{-1})$.
This establishes Conclusion (3) when $k$ is fixed. In the remainder we assume $k\to\infty$.

We have defined in the previous proof that 
$$
\tilde \bd^s = (\tilde d_1^s,\ldots, \tilde d_k^s) = \sum_{i <j} \Xb_{ij},
$$
which is a sum of independent random vectors. Let
\[
\Sigma_n:=\operatorname{Cov}(\tilde\bd^s )=\sum_{1\le i<j\le n}\operatorname{Cov}(\Xb_{ij}). 
\]
Let $N:=\binom{n}{2}$ and define $\widetilde X_{uv}:=\sqrt N\,X_{uv}$, so that
\(
\tilde{\bd}^s=\sum_{u<v}X_{uv}=\frac{1}{\sqrt N}\sum_{u<v}\widetilde X_{uv}.
\)
Then $\{\widetilde X_{uv}\}_{u<v}$ are independent. 

To apply Proposition 2.1 in \cite{chernozhukov2017central}, we check the following conditions. 
\medskip\noindent
\textbf{Verification of (M.1).}
Fix $j\in[k]$. Since $\tilde d_j^s=\sum_{u<v}(X_{uv})_j$ and the summands are independent,
\begin{align*}
N^{-1}\sum_{u<v}\bbE\!\left[(\widetilde X_{uv,j})^2\mid A\right]
&=N^{-1}\sum_{u<v}\bbE\!\left[N(X_{uv,j})^2 \right]
=\sum_{u<v}\bbE\!\left[(X_{uv,j})^2 \right]\\
&=\Var\!\left(\sum_{u<v}(X_{uv})_j \right)
=\Var(\tilde d_j^s )=1.
\end{align*}
Hence (M.1) holds with $b=1$.

\medskip\noindent
\textbf{Verification of (M.2).}
Fix $r \in[k]$. Note that $(\widetilde X_{uv})_r \neq 0$ only if the edge $(u,v)$ is incident to
node $r$, i.e. $u=r$ or $v=r$. There are at most $n-1$ such edges. Moreover, since
$|W_{uv}|\le 1$ almost surely,
\[
|\widetilde X_{uv,r}|
=\sqrt N\,|X_{uv,r}|
=\sqrt N\,\frac{|W_{uv}|}{\sigma_{n,r}}
\le \frac{\sqrt N}{\sigma_{n,r}}
\le \frac{\sqrt N}{m_n}.
\]
Therefore, for $\delta=1$,
\begin{align*}
N^{-1}\sum_{u<v}\bbE\!\left[|\widetilde X_{uv,r}|^{2+\delta}\right]
&=N^{-1}\sum_{u<v:\, (u,v)\ni r}\bbE\!\left[|\widetilde X_{uv,r}|^{3}\right]
= \sqrt{N} \sum_{u<v:\, (u,v)\ni r}\bbE\!\left[| X_{uv,r}|^{3}\right] \\
& = \frac{\sqrt{N}}{\sigma_{n,r}^3} \sum_{j\neq r}
\bbE\left(  |Y_{rj} - \bbE Y_{rj}|^3 \right) \leq  \frac{\sqrt{N}}{\sigma_{n,r}^3} \sigma_{n,r}^2 \le \frac{\sqrt{N}}{\sigma_{n,r} } , 
\end{align*}
where the second last inequality uses that $W_{rj}:=Y_{rj}-\bbE Y_{rj}$ satisfies
$|W_{rj}|\le 1$ a.s., hence $|W_{rj}|^3\le |W_{rj}|^2$ a.s., and therefore
\[
\sum_{j\neq r}\bbE\!\left[|Y_{rj}-\bbE Y_{rj}|^{3}\right]
=\sum_{j\neq r}\bbE\!\left[|W_{rj}|^{3}\right]
\le \sum_{j\neq r}\bbE\!\left[W_{rj}^{2}\right]
=\sum_{j\neq r}\Var(Y_{rj})
=\sigma_{n,r}^{2}.
\]
For $\delta = 2$, 
\begin{align*}
N^{-1}\sum_{u<v}\bbE\!\left[|\widetilde X_{uv,r}|^{2+\delta}\right]
&=N^{-1}\sum_{u<v:\, (u,v)\ni r}\bbE\!\left[|\widetilde X_{uv,r}|^{4}\right]
= N \sum_{u<v:\, (u,v)\ni r}\bbE\!\left[| X_{uv,r}|^{4}\right] \\
& = \frac{N}{\sigma_{n,r}^4 } \sum_{j \neq r}
\bbE\left(  |Y_{rj} - \bbE Y_{rj}|^4 \right) \leq  \frac{N}{\sigma_{n,r}^4} \sigma_{n,r}^2 \le \frac{N}{\sigma_{n,r}^2 } , 
\end{align*}
where the second last inequality uses that
\[
\sum_{j\neq r}\bbE\!\left[|Y_{rj}-\bbE Y_{rj}|^{4}\right]
=\sum_{j\neq r}\bbE\!\left[|W_{rj}|^{4}\right]
\le \sum_{j\neq r}\bbE\!\left[W_{rj}^{2}\right]
= \sum_{j\neq r}\Var(Y_{rj})
=\sigma_{n,r}^{2}.
\]

Consequently, (M.2) holds with the choice
\[
B_N:=\frac{\sqrt N}{m_n}. 
\]

\medskip\noindent
\textbf{Verification of (E.1).}
With the above choice of $B_N$, we have $|\widetilde X_{uv,j}|/B_N\le 1$ for all $u<v$ and
$j\in[k]$. Hence
\[
\bbE\!\left[\exp\!\left(\frac{|\widetilde X_{uv,j}|}{B_N}\right)\right]
\le e
\le 2e^{0}
\quad\text{for all $u<v$ and $j\in[k]$}.
\]
By the Gaussian approximation result in Proposition~2.1 of \citet{chernozhukov2017central},
applied to the normalized sum
$\tilde\bd^s=\frac{1}{\sqrt N}\sum_{u<v}\widetilde X_{uv}$ of independent vectors, we have
\[
\sup_{B\in\mathcal R}
\Big|
\bbP(\tilde\bd^s\in B)-\bbP(\Gb\in B)
\Big|
\;\le\;
C\left(\frac{\log^7(kn)}{m_n^2}\right)^{1/6},
\quad
m_n:=\min_{1\le i\le k}\sigma_{n,i}, \quad \Gb \sim \cN(\bzero, \bSigma_n), 
\]
where $\mathcal R$ denotes the class of rectangles in $\bbR^k$ and $C>0$ is a universal constant.

The covariance matrix $\Sigma_n=\operatorname{Cov}(\tilde\bd^s)$ has the form
\[
(\Sigma_n)_{ii} = 1, \qquad
(\Sigma_n)_{ij}
= \operatorname{Cov}(\tilde d_i^s,\tilde d_j^s)
= \frac{\Var(W_{ij})}{\sigma_{n,i}\sigma_{n,j}}
\le \frac{C}{m_n^2}, \quad i\neq j,
\]
where $m_n:=\min_{1\le i\le k}\sigma_{n,i}\to\infty$.
Thus $\Sigma_n = I_k + R_n$ with $\|R_n\|_{\max}\le C/m_n^2$.

Let $\Gb\sim\mathcal N(0,\Sigma_n)$ and $\Zb\sim\mathcal N(0,I_k)$.
By the Gaussian comparison inequality for rectangles
(Lemma~A.1 in \citet{chernozhukov2017central}),
\[
\sup_{B\in\mathcal R}
\big|
\bbP(\Gb\in B)-\bbP(\Zb\in B)
\big|
\le
C\|\Sigma_n-I_k\|_{\max}^{1/3}\log^{2/3}k
\le
C\left(\frac{\log^2 k}{m_n^2}\right)^{1/3}.
\]

Combining this bound with the Gaussian approximation error from
Proposition~2.1 of \citet{chernozhukov2017central}, we obtain
\[
\sup_{B\in\mathcal R}
\big|
\bbP(\tilde\bd^s\in B)-\bbP(\Zb\in B)
\big|
\le
C\left(\frac{\log^7(kn)}{m_n^2}\right)^{1/6}.
\]

Taking $B=\{x\in\mathbb R^k:\min_{i\le k}x_i\ge a_i\}$ yields
\[
\bbP\!\left(\min_{i\le k}\tilde d_i < c\right)
=
\bbP\!\left(\exists i\in[k]: Z_i<a_i\right)
+
O\!\left(\left(\frac{\log^7(kn)}{m_n^2}\right)^{1/6}\right),
\]
where $a_i=(c-\mu_{n,i})/\sigma_{n,i}$.

This completes the proof.

\end{proof}

\begin{lem}[Refined Berry--Esseen bound for noisy degrees]\label{lem:refined_berry_esseen}
Let $\cG = (V, E)$ be the graph with adjacency matrix $A$, and let $Y$ be a noisy version of $A$ generated according to the noise model in~\eqref{eqn:noise_model}. 
Fix $i \in [n]$, and define the (true) degree
$d_i := \sum_{j \ne i} A_{ij}$ and the noisy degree $\tilde d_i := \sum_{j \ne i} Y_{ij}$.

Then we have 
$$
\sup_{x \in \mathbb{R}} \left| \bbP\left( \frac{\tilde{d}_i - \mu_{n,i}}{\sigma_{n,i}} \leq x \right) - \Phi(x) \right|
\leq \frac{C}{\sigma_{n,i}}, 
$$
where $C > 0$ is a universal constant. 
\end{lem}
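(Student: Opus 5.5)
Conditional on $\Ab$, the noisy degree $\tilde d_i=\sum_{j\ne i}Y_{ij}$ is a sum of $n-1$ independent, non-identically distributed Bernoulli variables. Specifically, $d_i$ of them are $\mathrm{Bern}(1-\beta_n)$ (the $j$ with $A_{ij}=1$), and $n-1-d_i$ of them are $\mathrm{Bern}(\alpha_n)$. The plan is to apply the classical Berry--Esseen theorem for independent, non-identically distributed summands (with Shevtsova's constant, say $C_0\le 0.56$). Centering gives $W_{ij}:=Y_{ij}-\bbE Y_{ij}$, so
\[
\frac{\tilde d_i-\mu_{n,i}}{\sigma_{n,i}}=\frac{1}{\sigma_{n,i}}\sum_{j\ne i}W_{ij},
\qquad
\sum_{j\ne i}\bbE W_{ij}^2=\sigma_{n,i}^2 ,
\]
and the theorem then gives
\[
\sup_x\left|\bbP\!\left(\frac{\tilde d_i-\mu_{n,i}}{\sigma_{n,i}}\le x\right)-\Phi(x)\right|
\le C_0\,\frac{\sum_{j\ne i}\bbE|W_{ij}|^3}{\sigma_{n,i}^3}.
\]

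The key step is the third-moment bound, which is the same trick already used in the verification of (M.2) in the proof of Lemma~\ref{lem:min_CLT}. Since $|W_{ij}|\le 1$ almost surely, we have $|W_{ij}|^3\le W_{ij}^2$, hence $\sum_{j\ne i}\bbE|W_{ij}|^3\le\sigma_{n,i}^2$. More precisely, for $\mathrm{Bern}(p)$ one has $\bbE|W|^3=p(1-p)(p^2+(1-p)^2)\le p(1-p)$. Substituting this into the Berry--Esseen bound yields the ratio $C_0\sigma_{n,i}^2/\sigma_{n,i}^3=C_0/\sigma_{n,i}$, which is the claim with $C=C_0$.

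The only point needing care is degeneracy. If $\sigma_{n,i}=0$ (for example $\alpha_n=\beta_n=0$), the standardized variable is undefined; I would adopt the convention that the statement is vacuous, since the right-hand side is infinite. In any case, for $\sigma_{n,i}<1$ the bound is trivial because the left-hand side is at most $1\le C/\sigma_{n,i}$ once $C\ge1$. I would therefore take $C=\max(C_0,1)$ to cover all cases uniformly. No other obstacle is expected: the constant is universal because it does not depend on $n$, $d_i$, $\alpha_n$ or $\beta_n$.
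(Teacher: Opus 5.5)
Your proposal is correct and matches the paper's proof: condition on $\Ab$, apply the Berry--Esseen theorem for independent, non-identically distributed Bernoulli summands, and bound each centered third absolute moment $p(1-p)\bigl(p^2+(1-p)^2\bigr)$ by $p(1-p)$, so the numerator sums to at most $\sigma_{n,i}^2$. Your extra remark on the degenerate case $\sigma_{n,i}=0$ is a harmless addition that the paper leaves implicit.
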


\begin{proof}
Conditional on $A$, the variables $\{Y_{ij}\}_{j\ne i}$ are independent.

Applying the Berry--Esseen theorem for independent bounded random variables
(Lemma~\ref{lem:berry_esseen}), we obtain
\[
\sup_{x\in\mathbb R}
\left|
\bbP\!\left( \frac{\tilde d_i - \mu_{n,i}}{\sigma_{n,i}} \le x \right)
- \Phi(x)
\right|
\le \frac{C}{\sigma_{n,i}^3}
\sum_{j\ne i} \bbE\left|Y_{ij}-\bbE(Y_{ij})\right|^3 .
\]

We evaluate the third absolute moments case by case.

\textbf{Case 1:} If $A_{ij}=1$, then $Y_{ij}\sim\mathrm{Bern}(1-\beta_n)$ and
\[
\bbE|Y_{ij}-\bbE(Y_{ij})|^3
= \beta_n(1-\beta_n)(2\beta_n^2-2\beta_n+1).
\]

\textbf{Case 2:} If $A_{ij}=0$, then $Y_{ij}\sim\mathrm{Bern}(\alpha_n)$ and
\[
\bbE|Y_{ij}-\bbE(Y_{ij})|^3
= \alpha_n(1-\alpha_n)(2\alpha_n^2-2\alpha_n+1).
\]

Therefore,
\[
\sum_{j\ne i}\bbE|Y_{ij}-\bbE(Y_{ij})|^3
=
(n-1-d_i)\alpha_n(1-\alpha_n)(2\alpha_n^2-2\alpha_n+1)
+ d_i\beta_n(1-\beta_n)(2\beta_n^2-2\beta_n+1).
\]
Using the bound $2p^2-2p+1 \le 1$ for all $p\in[0,1]$, we obtain
\[
\sum_{j\ne i}\bbE|Y_{ij}-\bbE(Y_{ij})|^3
\le (n-1-d_i)\alpha_n(1-\alpha_n)+d_i\beta_n(1-\beta_n)
= \sigma_{n,i}^2.
\]
Substituting this into the Berry--Esseen bound yields
\[
\sup_{x\in\mathbb R}
\left|
\bbP\!\left( \frac{\tilde d_i - \mu_{n,i}}{\sigma_{n,i}} \le x \right)
- \Phi(x)
\right|
\le \frac{C}{\sigma_{n,i}}.
\]
\end{proof}

\begin{lem}[Berry--Esseen theorem via third moments]\label{lem:berry_esseen}
Let $X_1,\dots,X_n$ be independent real-valued random variables with finite third moments, and let
\[
S_n:=\sum_{i=1}^n X_i,\qquad \mu_S:=\bbE[S_n],\qquad \sigma_S^2:=\Var(S_n)>0.
\]
Then
\[
\sup_{x\in\bbR}\left|\bbP\!\left(\frac{S_n-\mu_S}{\sigma_S}\le x\right)-\Phi(x)\right|
\le \frac{C}{\sigma_S^3}\sum_{i=1}^n \bbE\!\left|X_i-\bbE X_i\right|^3,
\]
where $\Phi$ is the standard normal cdf and $C>0$ is a universal constant.
\end{lem}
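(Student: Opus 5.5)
This is the classical Berry--Esseen inequality for independent, non-identically distributed summands, so I would reduce it to the standard normalized form and then either cite it or give the usual smoothing argument. First I will center and normalize. Put $\xi_i := (X_i-\bbE X_i)/\sigma_S$, so the $\xi_i$ are independent, mean zero, $\sum_i \bbE\xi_i^2 = 1$, and $(S_n-\mu_S)/\sigma_S=\sum_i\xi_i$. The right-hand side becomes $C\sum_i \bbE|\xi_i|^3 =: C\,L_3$, which is the Lyapunov ratio. It then suffices to show that $\sup_x|\bbP(\sum_i\xi_i\le x)-\Phi(x)|\le C L_3$ with a universal $C$. This is exactly the Berry--Esseen theorem for non-identical summands (Esseen 1945; Shevtsova's constant $C\le 0.5600$), and it can be cited directly.

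For a self-contained route, I would use Esseen's smoothing inequality. For any $T>0$,
\[
\sup_x|F(x)-\Phi(x)|\le \frac{1}{\pi}\int_{-T}^{T}\frac{|\varphi(t)-e^{-t^2/2}|}{|t|}\,dt+\frac{24}{\pi\sqrt{2\pi}\,T},
\]
where $\varphi(t)=\prod_i\varphi_i(t)$ is the characteristic function of $\sum_i\xi_i$. I would take $T=1/(4L_3)$. We may assume $L_3\le 1$, since otherwise the bound is trivial once $C\ge1$ (the left side is at most $1$). On $|t|\le T$, expand each factor as $\varphi_i(t)=1-\tfrac{t^2}{2}\bbE\xi_i^2+\theta\,\tfrac{|t|^3}{6}\bbE|\xi_i|^3$ with $|\theta|\le1$. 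Lyapunov's inequality gives $(\bbE\xi_i^2)^{3/2}\le\bbE|\xi_i|^3\le L_3$, so each $|1-\varphi_i(t)|$ is small, and the logarithms can be expanded.

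The key estimate is the standard one: for $|t|\le T$,
\[
|\varphi(t)-e^{-t^2/2}|\le 16L_3|t|^3e^{-t^2/3}.
\]
I would prove it in two regimes. For $|t|\le L_3^{-1/3}$, telescope the product and use $|\prod a_i-\prod b_i|\le\sum|a_i-b_i|$ when all $|a_i|,|b_i|\le1$, together with a Gaussian factor from $|\varphi_i(t)|\le \exp(-t^2\bbE\xi_i^2/2+|t|^3\bbE|\xi_i|^3/6)$. For the larger $|t|$, use the symmetrization bound $|\varphi_i(t)|^2\le \exp(-t^2\bbE\xi_i^2+\tfrac{4}{3}|t|^3\bbE|\xi_i|^3)$, which gives $|\varphi(t)|\le e^{-t^2/3}$ on $|t|\le 1/(4L_3)$, and bound the difference by the sum of the two moduli. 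Inserting this into the smoothing integral gives $O(L_3)$ for the first term, and the second term is $O(1/T)=O(L_3)$.

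The main obstacle is the characteristic-function estimate in the intermediate range $L_3^{-1/3}\le|t|\le 1/(4L_3)$. There the Taylor expansion alone is too weak, and the symmetrization trick is needed to keep Gaussian decay. The remaining steps are routine bookkeeping of constants, and since only existence of a universal $C$ is claimed, these need not be optimized.
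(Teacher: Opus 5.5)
Your proposal is correct and takes the same approach as the paper, which simply invokes the classical Berry--Esseen theorem for independent, non-identically distributed summands and omits the proof; your normalization to the Lyapunov ratio $L_3$, followed by Esseen's smoothing inequality with $T=1/(4L_3)$, is the standard textbook argument. One small imprecision: $|1-\varphi_i(t)|$ is not small on all of $|t|\le T$, only for $|t|\lesssim L_3^{-1/3}$, but your two-regime split (telescoping on the lower range, symmetrization on the upper range) already handles this, so nothing breaks.
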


\noindent
This is the classical Berry--Esseen theorem under the third-moment condition, so we omit the proof. We now proceed to establish two auxiliary lemmas that will be used in the proofs of the main theorems.

\begin{lem}[Extreme-value envelope for the tail]\label{lem:quantile_bound}
Let $k=o(n)$ and set $m:=n-k$.
Assume the latent degrees are ordered as $d_1\ge d_2\ge\cdots\ge d_n$
(as assumed throughout the paper), and that $\sigma_{n,k+1}\to\infty$.
Define
\[
\epsilon_1(m):=\frac{\log\log m}{2\sqrt{2\log m}},
\qquad
\epsilon_2(n):=\frac{C(n)}{\sqrt{\log n}},
\]
where $C(n)\to\infty$ arbitrarily slowly and $C(n)=o\big((\log n)^{1/2}\big)$.

\smallskip
\noindent
\textbf{(Upper envelope).}
Conditional on the latent network $\Ab$, with probability $1-o(1)$,
\begin{equation}\label{eqn:quantile_upper}
\max_{i\ge k+1}\tilde d_i
\le
\mu_{n,k+1}+\Big(\sqrt{2\log m}-\epsilon_1(m)+\epsilon_2(n)\Big)\sigma_{n,k+1}
=:c_{n,k+1}^{u}.
\end{equation}

\smallskip
\noindent
\textbf{(Worst-case lower envelope).}
Moreover, under the homogeneous-tail configuration
$d_{k+1}=d_{k+2}=\cdots=d_n$, the lower bound
\begin{equation}\label{eqn:quantile_lower}
\max_{i\ge k+1}\tilde d_i
\ge
\mu_{n,k+1}+\Big(\sqrt{2\log m}-\epsilon_1(m)-\epsilon_2(n)\Big)\sigma_{n,k+1}
=:c_{n,k+1}^{\ell}
\end{equation}
holds with probability $1-o(1)$ and is asymptotically sharp in that case. 
\end{lem}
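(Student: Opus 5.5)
The plan is to treat the two envelopes separately, always conditioning on $\Ab$ so that the noisy degrees $\tilde d_i$ are sums of independent Bernoulli variables. Throughout, write $m=n-k$ and
\[
x_\pm:=\sqrt{2\log m}-\epsilon_1(m)\pm\epsilon_2(n).
\]
These are exactly the Gaussian extreme-value normalizations. For $m$ i.i.d.\ standard normals, the maximum equals $\sqrt{2\log m}-\epsilon_1(m)+O_p(1/\sqrt{\log m})$, where the $O_p$ term carries the Gumbel fluctuation. Since $\epsilon_2(n)\sqrt{2\log m}\asymp C(n)\to\infty$, the tail count satisfies $m\bar\Phi(x_+)\approx e^{-C(n)\sqrt2}\to0$ and $m\bar\Phi(x_-)\to\infty$. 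Here we use $k=o(n)$, so that $\log m=\log n+o(1)$.

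For the upper envelope, I would use a union bound $\bbP(\max_{i\ge k+1}\tilde d_i>c^u_{n,k+1})\le\sum_{i>k}\bbP(\tilde d_i>c^u_{n,k+1})$ and reduce each term to the worst node. There is a monotonicity step here: $\tilde d_i$ is stochastically increasing in $d_i$, because increasing $d_i$ replaces a $\mathrm{Bern}(\alpha_n)$ by a $\mathrm{Bern}(1-\beta_n)$, and $1-\beta_n>\alpha_n$ by assumption. Since $d_i\le d_{k+1}$ for $i>k$, each term is at most $\bbP(\tilde d_{k+1}>c^u_{n,k+1})$. It then suffices to show $m\,\bbP\big((\tilde d_{k+1}-\mu_{n,k+1})/\sigma_{n,k+1}>x_+\big)\to0$.

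This is the main obstacle. The uniform Berry--Esseen bound (Lemma~\ref{lem:refined_berry_esseen}) gives only an additive error $C/\sigma_{n,k+1}$, which after multiplication by $m$ is useless. I would instead use a moderate-deviation (Cramér-type) estimate for bounded independent summands, namely $\bbP(S>x\sigma)=\bar\Phi(x)(1+o(1))$ uniformly for $x=o(\sigma^{1/3})$. This requires $\log n=o(\sigma_{n,k+1}^{2/3})$. If only $\sigma_{n,k+1}\to\infty$ is available, I would fall back to Bernstein's inequality, $\exp\{-x^2/(2+2x/(3\sigma))\}$, which matches the Gaussian exponent up to a factor $1+O(\sqrt{\log n}/\sigma)$. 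Checking that this still beats $1/m$ with the $\epsilon_2$ slack is where I expect to need a mild growth condition on $\sigma_{n,k+1}$ relative to $\log n$. Under that condition I would state it as implicit in "$\sigma_{n,k+1}\to\infty$ sufficiently fast".

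For the lower envelope in the homogeneous configuration, the $\tilde d_i$ for $i>k$ are not independent, since they share edges. I would handle this with a second-moment argument. Let $N=\sum_{i>k}\bone\{\tilde d_i> c^\ell_{n,k+1}\}$. Then $\bbE N=m\,p_n$ with $p_n\approx\bar\Phi(x_-)$, again via moderate deviations, and $mp_n\to\infty$. For $i\neq j$, $\tilde d_i$ and $\tilde d_j$ share the single variable $Y_{ij}$. Conditioning on $Y_{ij}$ makes them independent, and shifting by one unit changes each tail probability by a factor $1+O(x/\sigma)$. This gives $\bbP(\text{both exceed})=p_n^2(1+o(1))$, so $\Var N=o((\bbE N)^2)$, and Chebyshev yields $\bbP(N=0)\to0$. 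Sharpness follows by pairing this with the upper bound, since both thresholds differ only by $2\epsilon_2(n)\sigma_{n,k+1}=o(\sqrt{\log n}\,\sigma_{n,k+1})$.
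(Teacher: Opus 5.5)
Your main line is the paper's own proof. There, the upper envelope is the first-moment bound on the exceedance count $X_h$ (Markov, i.e.\ the case $m=1$ of the Bollob\'as order-statistic lemma). It is reduced to node $k+1$ by the same coupling-based stochastic dominance and evaluated with the moderate-deviation asymptotic $\bbP(\tilde d_{k+1}\ge\mu_{n,k+1}+x\sigma_{n,k+1})\sim(\sqrt{2\pi}\,x)^{-1}e^{-x^2/2}$. Your plain union bound also makes the paper's point-mass-monotonicity extension of the variance bound to general tails unnecessary.

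The homogeneous lower envelope is the same second-moment/Chebyshev argument. The paper computes the pair covariance exactly, by conditioning on $A_{ij}$, using a difference-of-squares identity, and applying a local CLT for $\bbP(V'=h-1)$. Your conditioning on $Y_{ij}$ with a tail-ratio bound is lighter and equally sufficient for $\Var N=o((\bbE N)^2)$.

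The growth condition you flag is not an artifact of your route. The paper imposes it silently by restricting to thresholds with $x_n=o(\sigma_{n,k+1}^{1/3})$, which for $x_n\approx\sqrt{2\log m}$ is exactly $\log n=o(\sigma_{n,k+1}^{2/3})$. Some condition of this kind is necessary. For instance, take an empty latent graph with $(n-1)\alpha_n=\log\log n$. The noisy degrees are then essentially Poisson$(\log\log n)$, and their maximum is of order $\log n/\log\log n$, far above $c^u_{n,k+1}\approx\sqrt{2\log n\,\log\log n}$. So state it as an explicit hypothesis rather than as something implicit in $\sigma_{n,k+1}\to\infty$.

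The step that would fail is the Bernstein fallback, and no growth rate of $\sigma_{n,k+1}$ rescues it. Even in the best case, where the Bernstein exponent equals $x_+^2/2$ exactly, the union bound gives
\[
m\,e^{-x_+^2/2}=\sqrt{\log m}\;e^{-\sqrt{2}\,C(n)}\,\bigl(1+o(1)\bigr).
\]
This tends to zero only if $C(n)-\frac{1}{2\sqrt2}\log\log n\to\infty$, which contradicts the freedom to let $C(n)\to\infty$ arbitrarily slowly. The correction $\epsilon_1(m)$ is calibrated to cancel precisely the Mills-ratio prefactor $1/(\sqrt{2\pi}\,x)$, and exponential (Bernstein or Hoeffding) bounds do not supply that prefactor. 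You need a tail estimate sharp up to a factor $1+o(1)$, i.e.\ the Cram\'er-type moderate deviation you already invoke. The same holds for showing $\bbE N\to\infty$ in the lower envelope.
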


\begin{proof}
The argument follows the classical order-statistic approach of
\cite{bollobas1981degree}, but must be substantially modified to handle
(i) noisy degree observations,
(ii) sparse and heterogeneous parameters, and
(iii) the dependence induced by symmetric edge observations.
The proof proceeds in three steps:
(1) we control the mean and variance of the exceedance count $X_h$,
(2) we apply concentration inequalities to invert $X_h$ into quantile bounds,
and (3) we extend the argument from the homogeneous-tail case to general
degree sequences using stochastic dominance.

We index the nodes in nonincreasing order of their latent degrees and first
consider the homogeneous-tail case
\[
d_1 = d_2 = \cdots = d_k > d_{k+1} = d_{k+2} = \cdots = d_n.
\]
The general case will be addressed at the end of the proof.
Throughout, we work conditional on the latent network $\Ab$.

For $h\in\{0,\ldots,n-k-1\}$, define
\[
X_h := \sum_{i=k+1}^n \mathbf 1\{\tilde d_i \ge h\},
\]
the number of tail vertices whose noisy degree exceeds $h$.
Let
\[
\tilde{d}_{(1)}^{[>k]} \ge \tilde{d}_{(2)}^{[>k]} \ge \cdots \ge \tilde{d}_{(n-k)}^{[>k]}
\]
denote the ordered noisy degrees of vertices $k+1,\ldots,n$. By definition,
\[
\tilde{d}_{(m)}^{[>k]} \ge h \quad \text{if and only if} \quad X_h \ge m.
\]

To derive bounds on degree quantiles, we relate order statistics to tail
probabilities of the exceedance counts $X_h$. The following lemma, adapted
from Lemma~1 of \cite{bollobas1981degree}, provides this link.
\begin{lem}\label{lem:quantile_tail}
Let $1 \le h' < h'' \le n-k-1$. Put
$\lambda'=\bbE(X_{h'})$, $\sigma'^2=\Var(X_{h'})$, and define
$\lambda''$ and $\sigma''^2$ analogously.
If $m$ is an integer satisfying $\lambda'' < m \le \lambda'$, then
\[
\bbP\!\left( \tilde d_{(m)}^{[>k]} \ge h'' \right)
\le
\min\!\left\{\frac{\lambda''}{m},
\frac{\sigma''^2}{(m-\lambda'')^2}\right\},
\]
and
\[
\bbP\!\left( \tilde d_{(m)}^{[>k]} < h' \right)
\le
\frac{\sigma'^2}{(\lambda'-m+1)^2}.
\]
\end{lem}
The proof follows directly from applications of Markov’s and Chebyshev’s inequalities. We next provide explicit upper bounds on $\bbE\left( X_h\right)$ and $ \operatorname{Var}\left(X_h\right)$ for the values of $h$ relevant to our analysis.

From now on, we restrict attention to thresholds of the form
\[
h = \mu_{n,k+1} + x_n \sigma_{n,k+1},
\qquad
x_n \to \infty,
\quad
x_n = o(\sigma_{n,k+1}^{1/3}).
\]
Therefore, by Lemma~\ref{lem:binom_approx} in this moderate-deviation regime,
\begin{equation}\label{eqn:mu_asymp}
\lambda_h := \bbE(X_h)
= (n-k)\,\bbP(\tilde d_{k+1}\ge h)
\sim
\frac{1}{\sqrt{2\pi}}\frac{n-k}{x_n}
\exp\!\left(-\frac{x_n^2}{2}\right),
\end{equation}
where $\sim$ denotes asymptotic equivalence as $n\to\infty$.

Let $T:=\{k+1,\ldots,n\}$. To bound the variance
$\sigma_h^2:=\Var(X_h)$, let $W_h$ denote the number of unordered pairs
$\{i,j\}\subseteq T$ such that both noisy degrees exceed $h$, i.e.
\[
W_h := \sum_{\substack{i,j\in T\\ i<j}} \mathbf 1\{\tilde d_i\ge h,\ \tilde d_j\ge h\}.
\]
Using $m^2=m+2\binom{m}{2}$, we have
\[
\bbE(X_h^2)=\bbE(X_h)+2\bbE\!\left(\binom{X_h}{2}\right)
=\bbE(X_h)+2\bbE(W_h).
\]
To compute $\bbE(W_h)$, define
\begin{align*}
V   &\sim \mathrm{Bin}(n - 1 - d_{k+1}, \alpha_n) + \mathrm{Bin}(d_{k+1}, 1 - \beta_n), \\
V'  &\sim \mathrm{Bin}(n - 1 - d_{k+1}, \alpha_n) + \mathrm{Bin}(d_{k+1} - 1, 1 - \beta_n), \\
V'' &\sim \mathrm{Bin}(n - 2 - d_{k+1}, \alpha_n) + \mathrm{Bin}(d_{k+1}, 1 - \beta_n),
\end{align*}
and set $r_h:=\bbP(V\ge h)$, $p_t:=\bbP(V'\ge t)$, $q_t:=\bbP(V''\ge t)$.
Here $V$ is the noisy degree of a typical tail vertex; $V'$ (resp.\ $V''$) is
the noisy degree excluding the shared edge $(i,j)$, conditional on $A_{ij}=1$
(resp.\ $A_{ij}=0$).

Summing over unordered pairs yields
\[
\bbE(W_h)
=\sum_{\substack{i,j\in T\\ i<j: A_{ij}=1}}\big[(1-\beta_n)p_{h-1}^2+\beta_n p_h^2\big]
+\sum_{\substack{i,j\in T\\ i<j: A_{ij}=0}}\big[\alpha_n q_{h-1}^2+(1-\alpha_n) q_h^2\big].
\]
The key idea is to condition on $A_{ij}$: conditional on $\Ab$ and $A_{ij}$,
the only shared random contribution to $\tilde d_i$ and $\tilde d_j$ is the
symmetric observation on $(i,j)$, and the remaining contributions are independent.
Denote $\rho:=\sum_{\substack{i,j\in T\\ i<j}} A_{ij}$, the number of latent edges inside $T$.

Since the shared edge contributes $1$ with probability $1-\beta_n$ when $A_{ij}=1$
(and with probability $\alpha_n$ when $A_{ij}=0$), we have
\[
r_h=(1-\beta_n)p_{h-1}+\beta_n p_h
=\alpha_n q_{h-1}+(1-\alpha_n)q_h.
\]
Since $\lambda_h=(n-k)r_h$, we obtain
\begin{align*}
\sigma_h^2
&=\Var(X_h)
=\bbE(X_h^2)-\lambda_h^2
=\lambda_h+2\bbE(W_h)-\lambda_h^2\\
&=\lambda_h
+2\rho\big[(1-\beta_n)p_{h-1}^2+\beta_n p_h^2\big]
+2\Big(\tbinom{n-k}{2}-\rho\Big)\big[\alpha_n q_{h-1}^2+(1-\alpha_n)q_h^2\big]
-\big[(n-k)r_h\big]^2\\
&\le \lambda_h
+2\rho\big[(1-\beta_n)p_{h-1}^2+\beta_n p_h^2\big]
+\big[(n-k)^2-2\rho\big]\big[\alpha_n q_{h-1}^2+(1-\alpha_n)q_h^2\big]
-\big[(n-k)r_h\big]^2\\
&=\lambda_h
+2\rho\big((1-\beta_n)p_{h-1}^2+\beta_n p_h^2-r_h^2\big)
+\big[(n-k)^2-2\rho\big]\big(\alpha_n q_{h-1}^2+(1-\alpha_n)q_h^2-r_h^2\big)\\
&=\lambda_h
+2\rho\,\beta_n(1-\beta_n)(p_{h-1}-p_h)^2
+\big[(n-k)^2-2\rho\big]\alpha_n(1-\alpha_n)(q_{h-1}-q_h)^2\\
&=\lambda_h
+2\rho\,\beta_n(1-\beta_n)\,\bbP(V'=h-1)^2
+\big[(n-k)^2-2\rho\big]\alpha_n(1-\alpha_n)\,\bbP(V''=h-1)^2.
\end{align*}

Note that $\mu_{V'}=\mu_{n,k+1}+O(1)$ and
$\sigma_{V'}=\sigma_{n,k+1}(1+o(1))$ (and similarly for $V''$). Therefore, 
\[
\frac{(h-1)-\mu_{V'}}{\sigma_{V'}}=x_n+o(1),
\qquad
\frac{(h-1)-\mu_{V''}}{\sigma_{V''}}=x_n+o(1),
\]
and the conditions of Lemma~\ref{lem:binom_approx} apply. Therefore,
\[
\bbP(V'=h-1)=\frac{1+o(1)}{\sqrt{2\pi}\,\sigma_{n,k+1}}e^{-x_n^2/2},
\qquad
\bbP(V''=h-1)=\frac{1+o(1)}{\sqrt{2\pi}\,\sigma_{n,k+1}}e^{-x_n^2/2},
\]
so that
\[
\bbP(V'=h-1)^2=\bbP(V''=h-1)^2
=\frac{1+o(1)}{2\pi\,\sigma_{n,k+1}^2}e^{-x_n^2}.
\]
Thus,
\[
\sigma_h^2
\le \lambda_h
+\frac{e^{-x_n^2}}{2\pi\,\sigma_{n,k+1}^2}(1+o(1))
\Big\{2\rho\,\beta_n(1-\beta_n)
+\big[(n-k)^2-2\rho\big]\alpha_n(1-\alpha_n)\Big\}.
\]
The bracketed term can be rewritten as $\sigma_{n,k+1}^2 R$, where
\[
R
:=
\frac{2\rho\,b+\big[(n-k)^2-2\rho\big]a}{\sigma_{n,k+1}^2},
\qquad
a=\alpha_n(1-\alpha_n),\ \ b=\beta_n(1-\beta_n).
\]

Let $a=\alpha_n(1-\alpha_n)$, $b=\beta_n(1-\beta_n)$, and $\Delta=b-a$.
Since $\sigma_{n,k+1}^2=(n-1-d)a+db$, we may write
\[
R
:=
\frac{2\rho\,b+\big[(n-k)^2-2\rho\big]a}{\sigma_{n,k+1}^2}
=
\frac{2\rho\,\Delta+(n-k)^2a}{d\,\Delta+(n-1)a}.
\]

Each tail vertex has degree $d$ and at most $k$ neighbors in
$\{1,\ldots,k\}$, hence at least $\max\{0,d-k\}$ neighbors inside $T$.
Therefore,
\[
\frac{(n-k)\max\{0,d-k\}}{2}\le \rho \le \frac{(n-k)\min\{d,n-k-1\}}{2}.
\]
If $\Delta\ge0$, then $R$ is increasing in $\rho$, and the upper bound yields
\[
R\le \frac{(n-k)d\Delta+(n-k)(n-1)a}{d\Delta+(n-1)a}=n-k.
\]
If $\Delta<0$, then $R$ is decreasing in $\rho$, and the lower bound yields
\[
R\le \frac{(n-k)d\Delta+(n-k)(n-1)a}{d\Delta+(n-1)a}=n-k.
\]
Therefore, $R\le n-k$ for all $\Delta\in\mathbb R$ and using it yields
\[
\sigma_h^2
\le \lambda_h + \frac{n-k}{2\pi}e^{-x_n^2}(1+o(1)).
\]
It remains to compare the second term with $\lambda_h$.
By \eqref{eqn:mu_asymp},
\[
\lambda_h
\sim
\frac{n-k}{\sqrt{2\pi}}\frac{1}{x_n}e^{-x_n^2/2},
\]
and hence
\[
\frac{(n-k)e^{-x_n^2}}{\lambda_h}
\sim
\sqrt{2\pi}\,x_n\,e^{-x_n^2/2}
\;\longrightarrow\;0,
\]
since $x_n\to\infty$. Therefore,
\[
\frac{n-k}{2\pi}e^{-x_n^2}(1+o(1)) = o(\lambda_h),
\]
and we conclude that
\begin{equation}\label{eqn:sigma_bound}
\sigma_h^2 \le (1+o(1))\,\lambda_h \le c\,\lambda_h,
\end{equation}
for some constant $c>0$ and all sufficiently large $n$.

We now invert the exceedance bounds to obtain sharp asymptotics for the upper order statistics. Choose
\begin{align*}
h_f
&:= \mu_{n,k+1} + x_f \sigma_{n,k+1} \\
&= \mu_{n,k+1}
+ \bigl(2\sigma_{n,k+1}^2\log(n-k)\bigr)^{1/2}
- \Bigl(\frac{\sigma_{n,k+1}^2}{2\log(n-k)}\Bigr)^{1/2}
\Bigl\{\tfrac12\log\!\bigl(2\log(n-k)\bigr)+f+\log\bigl((2\pi)^{1/2}\bigr)\Bigr\},
\end{align*}
where $f=f(n)=o\!\bigl((\log n)^{1/2}\bigr)$ and
$x_f := (h_f-\mu_{n,k+1})/\sigma_{n,k+1}$.

By \eqref{eqn:mu_asymp},
\[
\lambda_{h_f}
:= \bbE(X_{h_f})
\sim \frac{n-k}{\sqrt{2\pi}}\,\frac{1}{x_f}e^{-x_f^2/2}
= e^{f}\,(1+o(1)).
\]

We first derive an upper bound on $\tilde d^{[>k]}_{(m)}$ for fixed
$m\in\{1,\ldots,n-k\}$.
Let $f_1=-C(n)\to-\infty$ arbitrarily slowly, so that
$\lambda_{h_{f_1}}=e^{f_1}(1+o(1))\to0$.
Apply Lemma~\ref{lem:quantile_tail} with $h'=0$ and $h''=h_{f_1}$.
Since $\lambda'=\bbE(X_0)=n-k$ and $\lambda''=\lambda_{h_{f_1}}<m$
for all sufficiently large $n$, the lemma yields
\begin{align*}
\bbP\!\left(\tilde d^{[>k]}_{(m)} \ge h_{f_1}\right)
&\le \min\!\left\{\frac{\lambda_{h_{f_1}}}{m},
\frac{\sigma^2_{h_{f_1}}}{(m-\lambda_{h_{f_1}})^2}\right\} \\
&\le c \min\!\left\{\frac{\lambda_{h_{f_1}}}{m},
\frac{\lambda_{h_{f_1}}}{(m-\lambda_{h_{f_1}})^2}\right\}
= o(1),
\end{align*}
where the second inequality uses \eqref{eqn:sigma_bound}.
Thus, with probability $1-o(1)$,
\[
\tilde d^{[>k]}_{(m)} < h_{f_1}.
\]
For the lower bound, choose $f_2=C(n)\to\infty$ arbitrarily slowly.
Then $\lambda_{h_{f_2}}=e^{f_2}(1+o(1))\to\infty$.
Applying Lemma~\ref{lem:quantile_tail} with $h'=h_{f_2}$, we obtain
\[
\bbP\!\left(\tilde d^{[>k]}_{(m)} < h_{f_2}\right)
\le
\frac{\sigma_{h_{f_2}}^2}{(\lambda_{h_{f_2}}-m+1)^2}
= o(1),
\]
where the last step follows from $\sigma_{h}^2 \le c\lambda_h$ and
$\lambda_{h_{f_2}}\to\infty$.
Hence,
\[
\tilde d^{[>k]}_{(m)} \ge h_{f_2}
\quad\text{with probability }1-o(1).
\]
Combining the upper and lower bounds, we obtain
\begin{align*}
\Bigl|
\tilde d^{[>k]}_{(m)}
&- \mu_{n,k+1}
- \bigl(2\sigma_{n,k+1}^2\log(n-k)\bigr)^{1/2}
+ \Bigl(\frac{\sigma_{n,k+1}^2}{8\log(n-k)}\Bigr)^{1/2}
\log\!\bigl(2\log(n-k)\bigr)
+ \log\bigl((2\pi)^{1/2}\bigr)
\Bigr| \\
&\le 2C(n)\Bigl(\frac{\sigma_{n,k+1}^2}{2\log(n-k)}\Bigr)^{1/2},
\end{align*}
with probability $1-o(1)$, for any $C(n)=o\!\bigl((\log n)^{1/2}\bigr)$.

\paragraph{Extension to general tail degree sequences.}
We now extend the upper-envelope bound to the general ordering
\[
d_1 \ge \cdots \ge d_k > d_{k+1} \ge \cdots \ge d_n,
\qquad
T := \{k+1,\ldots,n\}. 
\]

By Lemma~\ref{lem:stochastic_dominance}, for every $i\in T$ and all thresholds $h$,
\[
\bbP(\tilde d_i \ge h)
\le
\bbP(\tilde d_{k+1} \ge h).
\]
Hence the exceedance mean satisfies
\begin{equation}\label{eq:lambda_star_gen}
\lambda_h=\bbE(X_h)\le (n-k)\,\bbP(\tilde d_{k+1}\ge h)=:\lambda_h^{\star}.
\end{equation}

Next, conditioning on $A_{ij}$ and applying the same difference-of-squares
argument as in the homogeneous-tail case gives
\begin{align}\label{eq:var_general_gen}
\sigma_h^2
&\le \lambda_h
+2\beta_n(1-\beta_n)
\sum_{\substack{i<j\\ i,j\in T\\ A_{ij}=1}}
\bbP\!\big(V'(d_i)=h-1\big)\,
\bbP\!\big(V'(d_j)=h-1\big)\notag\\
&\quad
+2\alpha_n(1-\alpha_n)
\sum_{\substack{i<j\\ i,j\in T\\ A_{ij}=0}}
\bbP\!\big(V''(d_i)=h-1\big)\,
\bbP\!\big(V''(d_j)=h-1\big),
\end{align}
with $V'(d)$ and $V''(d)$ as defined earlier.

By the point-mass monotonicity in Lemma~\ref{lem:stochastic_dominance}, for all
$i\in T$ and all $h\ge \mu_{n,k+1}+1$,
\[
\bbP\!\big(V'(d_i)=h-1\big)\le \bbP\!\big(V'(d_{k+1})=h-1\big),
\qquad
\bbP\!\big(V''(d_i)=h-1\big)\le \bbP\!\big(V''(d_{k+1})=h-1\big).
\]
Substituting these bounds into \eqref{eq:var_general_gen} reduces the right-hand
side to the homogeneous-tail variance bound with $d$ replaced by $d_{k+1}$.
Therefore, by the estimate proved above in the homogeneous-tail case, there exists
$c>0$ such that
\begin{equation}\label{eq:var_star_gen}
\sigma_h^2 \le c\,\lambda_h^{\star}
\end{equation}
for all sufficiently large $n$ and all thresholds in the same regime.

Finally, applying Lemma~\ref{lem:quantile_tail} using
\eqref{eq:lambda_star_gen} and \eqref{eq:var_star_gen} yields the same
upper-envelope bound as before:
\[
\max_{i\ge k+1}\tilde d_i
\le
\mu_{n,k+1}
+\Big(\sqrt{2\log(n-k)}-\epsilon_1(n-k)+\epsilon_2(n)\Big)\sigma_{n,k+1}
\]
with probability $1-o(1)$.

\smallskip
\noindent
\emph{Lower envelope.}
The lower-envelope bound is achieved (and hence asymptotically sharp) under the
homogeneous-tail configuration $d_{k+1}=\cdots=d_n$, completing the proof.
\end{proof}

\begin{lem}[Monotone stochasticity in $d$]\label{lem:stochastic_dominance}
Let $n \in \mathbb N$ and let $\alpha_n,\beta_n \in [0,1]$ satisfy $1-\beta_n \ge \alpha_n$.
For each $d \in \{0,1,\dots,n-1\}$, let
\[
X(d) \;\sim\; \mathrm{Bin}(n-1-d,\alpha_n) + \mathrm{Bin}(d,1-\beta_n),
\]
where the two binomial variables are independent. Then for any $d_1 \ge d_2$, $X(d_1)$ stochastically dominates $X(d_2)$ in the usual stochastic order; that is, for all $h \in \mathbb R$,
\[
\bbP\!\left( X(d_1) \ge h \right) \;\ge\; \bbP\!\left( X(d_2) \ge h \right).
\]

Moreover, let $h^\star\in\{0,1,\dots,n-1\}$ satisfy
\[
h^\star \ge \left\lceil (n-d_1-1)\alpha_n + (d_1-1)(1-\beta_n) \right\rceil + 1.
\]
Then $\bbP( X(d_1)=h^\star ) \ge \bbP( X(d_2)=h^\star )$.
\end{lem}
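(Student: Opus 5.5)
Write $\delta := d_1-d_2\ge 0$ and $p:=1-\beta_n\ge \alpha_n=:a$. The plan for the first claim is a coupling argument. Write
\[
X(d)=\sum_{j=1}^{n-1} \xi_j^{(d)},
\]
where the first $d$ summands are $\mathrm{Bern}(p)$ and the remaining $n-1-d$ are $\mathrm{Bern}(a)$, all independent.

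To build the coupling, take i.i.d.\ uniforms $U_1,\dots,U_{n-1}$. For $X(d_1)$ set $\xi_j=\bone\{U_j\le p\}$ for $j\le d_1$ and $\xi_j=\bone\{U_j\le a\}$ otherwise. For $X(d_2)$ use the same rule with $d_2$ in place of $d_1$.

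Since $p\ge a$, each indicator for $X(d_1)$ dominates the corresponding one for $X(d_2)$. Coordinates $j\le d_2$ and $j>d_1$ agree exactly. On $d_2<j\le d_1$ we have $\bone\{U_j\le p\}\ge \bone\{U_j\le a\}$. Hence $X(d_1)\ge X(d_2)$ pointwise, which gives $\bbP(X(d_1)\ge h)\ge \bbP(X(d_2)\ge h)$ for all $h$.

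For the point-mass claim, I would first reduce to $d_1=d_2+1$ by induction. This needs care: the threshold condition on $h^\star$ involves only $d_1$, so I must check that it implies the analogous condition for every intermediate $d\in[d_2,d_1]$. Here I use that
\[
(n-d-1)a+(d-1)p
\]
is nondecreasing in $d$, because $p\ge a$. So the threshold for $d_1$ is the largest, and $h^\star$ above it clears every intermediate threshold. The induction therefore chains.

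For one step, let $W\sim \mathrm{Bin}(n-2-d,a)+\mathrm{Bin}(d,p)$ with $d=d_2$. Then $X(d+1)=W+B_p$ and $X(d)=W+B_a$, where $B_p\sim\mathrm{Bern}(p)$ and $B_a\sim\mathrm{Bern}(a)$ are independent of $W$. This gives
\[
\bbP(X(d+1)=h)-\bbP(X(d)=h)=(p-a)\big[\bbP(W=h-1)-\bbP(W=h)\big].
\]
So it suffices to show that the pmf of $W$ is nonincreasing from $h-1$ to $h$ whenever $h$ exceeds the threshold.

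$W$ is a sum of independent Bernoullis, i.e.\ a Poisson-binomial variable. Its pmf is log-concave, hence unimodal, and a classical result (Darroch) says its mode lies within one of its mean. Here $\bbE W=(n-2-d)a+dp$. So for $h-1\ge \lceil \bbE W\rceil$ (more carefully, $h-1\ge \lfloor\bbE W\rfloor+1$ suffices), the pmf is nonincreasing at $h-1\to h$.

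The remaining step is to match this with the stated threshold. The statement requires
\[
h^\star\ge \lceil (n-d_1-1)a+(d_1-1)p\rceil+1,
\]
and with $d_1=d+1$ the quantity inside the ceiling is exactly $(n-d-2)a+dp=\bbE W$. So $h^\star-1\ge\lceil \bbE W\rceil$, which puts us at or beyond the mode.

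I expect the main obstacle to be the boundary case when $\bbE W$ is an integer or the mode sits at $\lceil\bbE W\rceil$. If Darroch's bound is insufficient there, I would instead use the monotone likelihood ratio / log-concavity directly: for a log-concave pmf, $\bbP(W=h)\le \bbP(W=h-1)$ as soon as $h-1$ is at least the mode. Then I would invoke Darroch's theorem that every mode $M$ satisfies $M\le\lceil \bbE W\rceil$. This is exactly what the threshold provides.
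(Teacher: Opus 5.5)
Your proposal is correct and follows essentially the same route as the paper. The paper also uses the uniform coupling for the first claim. For the second it uses the one-step decomposition $X(d+1)=W+B_p$, $X(d)=W+B_a$, giving $(p-a)[\bbP(W=h-1)-\bbP(W=h)]$, together with unimodality of the Poisson-binomial pmf, Darroch's bound that every mode is at most $\lceil \bbE W\rceil$, and chaining over $d_2\le d<d_1$ via monotonicity of $(n-d-2)a+dp$ in $d$. The boundary case you worry about is already covered by that form of Darroch's bound, which is exactly what the paper invokes. Your parenthetical stronger condition $h-1\ge\lfloor \bbE W\rfloor+1$ is therefore unnecessary, and it is not implied by the hypothesis when $\bbE W$ is an integer.
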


\begin{proof}

\medskip
\noindent\textbf{Proof of the first claim.}
We argue by coupling.

Construct all random variables on a common probability space via i.i.d.\ uniform random variables. Let $U_1,\dots,U_n \stackrel{\text{i.i.d.}}{\sim} \mathrm{Unif}(0,1)$. For each $d\in\{0,1,\dots,n-1\}$, define
\[
X(d)\;=\;\sum_{i=1}^{n-1-d} \mathbf{1}\{U_i\le \alpha_n\}\;+\;\sum_{i=n-d}^{n-1} \mathbf{1}\{U_i\le 1-\beta_n\}.
\]
By construction, $X(d) \stackrel{d}{=} \mathrm{Bin}(n-1-d,\alpha_n)
+ \mathrm{Bin}(d,1-\beta_n)$, where the two binomial components are independent. 

Fix $d_1 \ge d_2$ and compare $X(d_1)$ and $X(d_2)$ term by term:
\begin{itemize}
\item For $i \le n-1-d_1$, both sums use the threshold $\alpha_n$, so the summands coincide.
\item For $n-d_1 \le i \le n-1-d_2$, $X(d_1)$ uses the threshold $1-\beta_n$
      while $X(d_2)$ uses $\alpha_n$.
\item For $i \ge n-d_2$, both sums use the threshold $1-\beta_n$, so the summands again coincide.
\end{itemize}
Since $1-\beta_n \ge \alpha_n$, we have for all $u \in [0,1]$,
\[
\mathbf{1}\{u \le 1-\beta_n\} \ge \mathbf{1}\{u \le \alpha_n\}.
\]
Applying this inequality with $u = U_i$ in each summand yields
\[
X(d_1) \ge X(d_2) \qquad \text{almost surely}.
\]
Therefore, for every $t \in \mathbb R$,
\[
\mathbb P\!\left( X(d_1) \ge t \right)
\ge
\mathbb P\!\left( X(d_2) \ge t \right),
\]
which shows that $X(d_1)$ stochastically dominates $X(d_2)$ in the usual stochastic order.

\medskip
\noindent\textbf{Proof of the second claim.}

Write $p_d(h) = \bbP(X(d)=h)$. 
Let $R_k$ be the sum of the $n-2$ Bernoulli variables that are common to $X(k)$ and $X(k+1)$, i.e.,
\[
R_k \sim \mathrm{Bin}(n-k-2,\alpha_n) + \mathrm{Bin}(k,1-\beta_n),
\]
and let $r_h^{(k)}=\bbP(R_k=h)$ (with the convention $r^{(k)}_{-1}=0$).
Let $B_{\alpha_n}\sim \mathrm{Bern}(\alpha_n)$ and $B_{\beta_n}\sim \mathrm{Bern}(1-\beta_n)$ be independent of $R_k$.
Then
\[
X(k)=R_k + B_{\alpha_n},\qquad X(k+1)=R_k + B_{\beta_n}.
\]
Consequently, for any $h$,
\[
p_k(h)= (1-\alpha_n) r_h^{(k)} + \alpha_n r_{h-1}^{(k)},\qquad
p_{k+1}(h)= \beta_n r_h^{(k)} + (1-\beta_n) r_{h-1}^{(k)}.
\]
Subtracting yields
\[
p_{k+1}(h)-p_k(h)
=\big((1-\beta_n)-\alpha_n\big)\big(r_{h-1}^{(k)}-r_h^{(k)}\big).
\]
Let $m_k$ be a mode of $R_k$. Since the pmf of $R_k$ is unimodal, $r_h^{(k)}$ is nonincreasing for $h\ge m_k$, so $r_{h-1}^{(k)} \ge r_h^{(k)}$ for all $h\ge m_k+1$.
Since $(1-\beta_n)-\alpha_n \ge 0$, it follows that
\[
p_{k+1}(h) - p_k(h)
=\big((1-\beta_n)-\alpha_n\big)\big(r_{h-1}^{(k)}-r_h^{(k)}\big)\ge 0,
\quad \forall\, h \ge m_k+1,
\]
i.e.,
\[
p_{k+1}(h) \ge p_k(h) \quad \text{for all } h \ge m_k+1.
\]
The mean of $R_k$ is
\[
\mu_{R,k} := \bbE R_k = (n-k-2)\alpha_n + k(1-\beta_n).
\]
By Darroch's bound, any mode $m_k$ of $R_k$ satisfies
\[
m_k \le \lceil \mu_{R,k} \rceil.
\]
Since $\mu_{R,k}$ is nondecreasing in $k$, we have
\[
\max_{d_2 \le k \le d_1-1} m_k
\le \max_{d_2 \le k \le d_1-1} \lceil \mu_{R,k} \rceil
= \lceil \mu_{R,d_1-1} \rceil.
\]
Hence, if $h^\star \ge \lceil \mu_{R,d_1-1} \rceil + 1$, then
$h^\star \ge m_k + 1$ for all $k=d_2,\dots,d_1-1$.

Therefore, applying $p_{k+1}(h^\star) \ge p_k(h^\star)$ stepwise for
$k=d_2,\dots,d_1-1$ yields
\[
p_{d_1}(h^\star) \ge p_{d_2}(h^\star),
\]
which proves the result.

\end{proof}

\begin{lem}[Local CLT and moderate deviations for a two--binomial sum]
\label{lem:binom_approx}
Let
\[
X \sim \mathrm{Bin}(n-d,\alpha_n) + \mathrm{Bin}(d,1-\beta_n),
\]
where the two binomial variables are independent. Define
\[
\mu_n := \bbE(X) = (n-d)\alpha_n + d(1-\beta_n),
\qquad
\sigma_n^2 := \Var(X)
= (n-d)\alpha_n(1-\alpha_n) + d(1-\beta_n)\beta_n,
\]
and assume that $\sigma_n^2 \to \infty$ as $n\to\infty$.

Let $h=h_n\in\mathbb Z$ and define
\[
x_n := \frac{h-\mu_n}{\sigma_n}.
\]
Assume that $|x_n| = o(\sigma_n^{1/3})$ and $x_n \rightarrow \infty$ as $n\rightarrow \infty$.

Then
\[
\bbP(X=h)
=
\frac{1}{\sqrt{2\pi}\,\sigma_n}
\exp\!\left(-\frac{x_n^2}{2}\right)\,(1+o(1)),
\]
and
\[
\bbP(X\ge h)
\sim
\frac{1}{\sqrt{2\pi}}\frac{1}{x_n}\exp\!\left(-\frac{x_n^2}{2}\right),
\]
where $\Phi$ denotes the standard normal distribution function and $\sim$ means that the percentage difference between the two sides tends to 0 as $n \rightarrow \infty$.
\end{lem}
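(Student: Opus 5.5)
We want the local CLT and the moderate-deviation tail asymptotics for $X=\mathrm{Bin}(n-d,\alpha_n)+\mathrm{Bin}(d,1-\beta_n)$ in the regime $x_n\to\infty$, $x_n=o(\sigma_n^{1/3})$. The approach is the classical one: exponential tilting (Cramér's method) followed by a local limit theorem for the tilted law.

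\textbf{Step 1: tilting.} Write $X=\sum_{j} B_j$ with independent Bernoulli summands whose success probabilities are $p_j\in\{\alpha_n,1-\beta_n\}$. Let $K(\theta)=\sum_j\log(1-p_j+p_je^\theta)$ be the cumulant generating function, so $K'(0)=\mu_n$ and $K''(0)=\sigma_n^2$. Choose $\theta_n$ solving $K'(\theta_n)=h$. Because every third and higher cumulant of a Bernoulli variable is bounded by its variance times a constant, one has $K^{(r)}(\theta)=O(\sigma_n^2)$ uniformly for $|\theta|\le 1$. Inverting then gives
\[
\theta_n=\frac{x_n}{\sigma_n}\bigl(1+O(x_n/\sigma_n)\bigr).
\]
Under the tilted law $Q$, the summands remain independent Bernoullis with variance $K''(\theta_n)=\sigma_n^2(1+O(x_n/\sigma_n))$.

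\textbf{Step 2: exact identities and the exponent.} The pmf satisfies
\[
\bbP(X=h)=e^{K(\theta_n)-\theta_n h}\,Q(X=h).
\]
A third-order Taylor expansion gives
\[
K(\theta_n)-\theta_n h=-\tfrac12 x_n^2+O(x_n^3/\sigma_n).
\]
The condition $x_n=o(\sigma_n^{1/3})$ is exactly what makes the error term $o(1)$, so the prefactor equals $e^{-x_n^2/2}(1+o(1))$.

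\textbf{Step 3: local CLT at the tilted mean.} It remains to show
\[
Q(X=h)=\frac{1+o(1)}{\sqrt{2\pi}\,\sigma_n}.
\]
Under $Q$, $h$ is exactly the mean and the variance tends to infinity. A local limit theorem for sums of independent Bernoulli variables gives $\sup_m|Q(X=m)-\phi(\cdot)/\tilde\sigma|=o(1/\tilde\sigma)$. One way is through Fourier inversion: each factor of the characteristic function satisfies $|1-p+pe^{it}|^2=1-2p(1-p)(1-\cos t)$, which bounds $|\varphi(t)|\le e^{-c\tilde\sigma^2 t^2}$ on $[-\pi,\pi]$. There is no lattice obstruction, since each summand has span $1$. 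Alternatively, one can use the log-concavity of the Poisson–binomial pmf together with Berry--Esseen (Lemma~\ref{lem:berry_esseen}) with error $O(1/\tilde\sigma)$; unimodality and log-concavity turn the CDF approximation into a pointwise one. Combining with Step 2 yields the first claim.

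\textbf{Step 4: the tail.} Sum over $m\ge h$. Applying Step 2 for each $m$ gives
\[
\bbP(X=m)\approx \bbP(X=h)\,e^{-\theta_n(m-h)}\times\bigl(\text{local CLT ratio}\bigr).
\]
A cleaner route is to write
\[
\bbP(X\ge h)=e^{K-\theta_n h}\,\bbE_Q\!\bigl[e^{-\theta_n(X-h)}\mathbf 1\{X\ge h\}\bigr].
\]
Because $\theta_n\tilde\sigma\approx x_n\to\infty$, the expectation is dominated by $X-h=O(1/\theta_n)=o(\tilde\sigma)$. On that range the local CLT gives a mass of $\approx 1/(\sqrt{2\pi}\tilde\sigma)$ per point, so the expectation is asymptotic to
\[
\frac{1}{\sqrt{2\pi}\,\tilde\sigma}\sum_{j\ge0}e^{-\theta_n j}\sim\frac{1}{\sqrt{2\pi}\,\tilde\sigma\,\theta_n}\sim\frac{1}{\sqrt{2\pi}\,x_n}.
\]
This uses $\theta_n\to0$, which holds since $x_n=o(\sigma_n)$, so the geometric sum is $\sim1/\theta_n$. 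Multiplying by $e^{-x_n^2/2}(1+o(1))$ yields the second claim.

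\textbf{Main obstacle.} The delicate step is Step 3 and its uniform use in Step 4. We need a local limit theorem under the tilted measure that holds uniformly over a window of width $o(\tilde\sigma)$ around the mean, for heterogeneous and possibly sparse Bernoulli parameters. I would first try the Fourier inversion bound above, controlling $\int_{|t|\le\pi}|\varphi_Q(t)-e^{-\tilde\sigma^2t^2/2}|\,dt=o(1/\tilde\sigma)$. This splits into the region $|t|\le\tilde\sigma^{-1}\log\tilde\sigma$, handled by a cumulant expansion, and its complement, handled by the Gaussian-type decay bound.
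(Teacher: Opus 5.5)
Your proposal is correct, and it takes a genuinely different route from the paper.

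The paper works directly with the convolution $\bbP(X=h)=\sum_s \bbP(X_1=s)\bbP(X_2=h-s)$. It first proves a Stirling/KL local CLT for a single binomial on the window $|k-\mu|=o(\sigma^{4/3})$ (Lemma~\ref{lem:lclt_bin}). It then splits into two cases. In the balanced case $\sigma_1\asymp\sigma_2$ it uses a truncation window, discards the outside by unimodality, and compares a sum of Gaussian products with its integral. In the unbalanced case $\sigma_1/\sigma_2\to0$ it conditions on $X_1$ and Taylor-expands $\varphi$. It does not prove the tail asymptotic at all; it quotes Petrov's Cramér-type moderate deviation theorem together with Mills' ratio.

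You instead tilt to $h$. The factor $e^{-x_n^2/2}$ then comes out of the exact identity $\bbP(X=h)=e^{K(\theta_n)-\theta_n h}Q(X=h)$ with explicit error $O(x_n^3/\sigma_n)$. The only limit theorem you need is a local CLT at the \emph{center} of a Poisson--binomial law, where an additive $O(\tilde\sigma^{-2})$ error is harmless. The same estimate then gives the tail via a geometric sum, so nothing is outsourced.

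The paper's route is more elementary: explicit pmfs and no Fourier analysis. The price is that it must carry local CLTs into the moderate-deviation range and needs the two-regime split. Yours treats arbitrary heterogeneous Bernoulli summands uniformly and is self-contained; it is essentially the proof of Cramér's theorem specialized to bounded summands.

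It is also more robust exactly where the paper's argument is thin, namely the unbalanced case. There, the additive $o(1/\sigma_2)$ error is not small relative to the main term $\sigma_2^{-1}e^{-x_n^2/2}$. Moreover, both the bound $\bbE[U^2](1+x_2^2)=o(1)$ and the final replacement of $\sigma_2$ by $\sigma_n$ in the exponent require $x_n\sigma_1/\sigma_2\to0$. That does not follow from $\sigma_1/\sigma_2\to0$ when $x_n\to\infty$; without it, the paper's intermediate formula is off by the factor $e^{-x_n^2\sigma_1^2/(2\sigma_2^2)}$. Your tilting argument needs no such condition.
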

% new and simplified proof 
\begin{proof}
We treat the local probability and the upper tail separately.

Let $X_1\sim\mathrm{Bin}(n-d,\alpha_n)$ and $X_2\sim\mathrm{Bin}(d,1-\beta_n)$ be independent.
Write $\mu_i=\bbE X_i$, $\sigma_i^2=\Var(X_i)$ for $i=1,2$, so that
$\mu_n=\mu_1+\mu_2$ and $\sigma_n^2=\sigma_1^2+\sigma_2^2$.
\paragraph{Local probability.}
We distinguish two regimes depending on the relative sizes of
$\sigma_1^2$ and $\sigma_2^2$.

\medskip
\noindent\emph{Balanced case.}
Assume that $\sigma_1/\sigma_2$ is bounded away from $0$ and $\infty$.
Let $h$ satisfy $|h-\mu_n|=o(\sigma_n^{4/3})$.

Let $x_n := \frac{h-\mu_n}{\sigma_n}$ and define a truncation window
\[
W_n := \Big\{s\in\mathbb Z:\ 
\max(0,\lceil\mu_1-2x_n\sigma_n\rceil)
\le s \le
\min(n-d,\lfloor\mu_1+2x_n\sigma_n\rfloor)
\Big\},
\]
where 
\(
x_n = o(\sigma_1^{1/3}) = o(\sigma_n^{1/3})\) and $x_n\to\infty$ by assumption. 
Decompose
\[
\mathbb P(X=h)
=
\sum_{s\in W_n}\mathbb P(X_1=s)\mathbb P(X_2=h-s)
+
\sum_{s\notin W_n}\mathbb P(X_1=s)\mathbb P(X_2=h-s).
\]
For notation simplicity, denote  
\[
p_1(s) = \bbP(X_1 = s), \qquad p_2(h-s) = \bbP(X_2= h-s).
\]

\smallskip
\noindent
\textbf{Main contribution.}
For $s\in W_n$ we have $|s-\mu_1|\le 2x_n\sigma_n=o(\sigma_1^{4/3})$. Moreover,
since $\sigma_1\asymp\sigma_2\asymp\sigma_n$ in the balanced case,
\[
|(h-s)-\mu_2|
\le |h-\mu_n|+|s-\mu_1|
=
o(\sigma_n^{4/3})+o(\sigma_1^{4/3})
=
o(\sigma_2^{4/3}).
\]
Since we are in the balanced case, we have
\[
\sigma_1^2 \asymp \sigma_2^2 \asymp \sigma_n^2 \to \infty.
\]
Moreover, for all $s\in W_n$,
\[
|s-\mu_1| \le 2x_n\sigma_n = o(\sigma_1^{4/3}),
\qquad
|(h-s)-\mu_2|
\le |h-\mu_n|+|s-\mu_1|
= o(\sigma_2^{4/3}).
\]
Therefore the conditions of Lemma~\ref{lem:lclt_bin} are satisfied for both
$X_1\sim\mathrm{Bin}(n-d,\alpha_n)$ and
$X_2\sim\mathrm{Bin}(d,1-\beta_n)$, and the local CLT applies uniformly over
$s\in W_n$. Consequently,
\[
p_1(s)
=
\frac{1}{\sigma_1}\varphi\!\left(\frac{s-\mu_1}{\sigma_1}\right)\{1+r_{1,n}(s)\},
\qquad
p_2(h-s)
=
\frac{1}{\sigma_2}\varphi\!\left(\frac{h-s-\mu_2}{\sigma_2}\right)\{1+r_{2,n}(h-s)\},
\]
where $\varphi(t)=(2\pi)^{-1/2}e^{-t^2/2}$ and
\[
\sup_{s\in W_n}|r_{1,n}(s)|\to0,
\qquad
\sup_{s\in W_n}|r_{2,n}(h-s)|\to0,
\]
uniformly over admissible $h$. Hence for $s\in W_n$,
\[
p_1(s) p_2(h-s)
=
f_n(s)\,(1+o(1)),
\qquad
f_n(s):=
\frac{1}{\sigma_1\sigma_2}
\varphi\!\left(\frac{s-\mu_1}{\sigma_1}\right)
\varphi\!\left(\frac{h-s-\mu_2}{\sigma_2}\right),
\]
where the $o(1)$ term is uniform in $s\in W_n$ (and in $h$). Summing over $s\in W_n$
yields
\begin{equation}\label{eq:main_sum_gauss_proxy}
\sum_{s\in W_n}p_1(s) p_2(h-s)
=
(1+o(1))\sum_{s\in W_n} f_n(s).
\end{equation}

\smallskip
\noindent
\textbf{Tail contribution outside \(W_n\).}
Recall that \(p_2(t)=\bbP(X_2=t)\). By a change of variables,
\[
\sum_{s\in W_n} p_2(h-s)
=
\sum_{s\in W_n}\bbP(X_2=h-s)
=
\bbP\big(X_2\in h-W_n\big),
\]
and similarly,
\[
\sum_{s\notin W_n} p_2(h-s)
=
\bbP\big(X_2\notin h-W_n\big).
\]
Since \(W_n=\{s\in\bbZ:\ |s-\mu_1|\le 2x_n\sigma_n\}\), we have
\[
h-W_n=\{t\in\bbZ:\ |(h-t)-\mu_1|\le 2x_n\sigma_n\}.
\]
Using \(\mu_n=\mu_1+\mu_2\) and \(h-\mu_n=x_n\sigma_n\), for any integer \(t\),
\[
(h-t)-\mu_1 = (h-\mu_n) - (t-\mu_2) = x_n\sigma_n - (t-\mu_2).
\]
Therefore,
\[
t\in h-W_n
\iff
|x_n\sigma_n-(t-\mu_2)|\le 2x_n\sigma_n
\iff
t-\mu_2\in[-x_n\sigma_n,\,3x_n\sigma_n].
\]
Consequently,
\[
\sum_{s\in W_n} p_2(h-s)
=
\bbP\big(X_2-\mu_2\in[-x_n\sigma_n,\,3x_n\sigma_n]\big),
\]
and
\[
\sum_{s\notin W_n} p_2(h-s)
=
\bbP\big(X_2-\mu_2\notin[-x_n\sigma_n,\,3x_n\sigma_n]\big)
\le
\bbP\big(|X_2-\mu_2|\ge x_n\sigma_n\big).
\]
In the balanced case \(\sigma_2\asymp\sigma_n\), hence \(x_n\sigma_n\asymp x_n\sigma_2\).
By Bernstein (or Hoeffding) inequality for binomial variables, there exists \(c>0\) such that
\[
\bbP\big(|X_2-\mu_2|\ge x_n\sigma_n\big)\le 2\exp(-c x_n^2)=o(1),
\]
since \(x_n\to\infty\). Thus
\[
\sum_{s\in W_n} p_2(h-s)=1-o(1),
\qquad
\sum_{s\notin W_n} p_2(h-s)=o(1).
\]

To control contribution from the set $W_n$, 
$$
\sum_{s\in W_n} p_1(s)p_2(h-s) \geq \left(\min_{s\in W_n}p_1(s)\right) \left(\sum_{s\in W_n}  p_2(h-s) \right)  \geq (1-o(1))\left(\min_{s\in W_n}p_1(s)\right). 
$$
To control contribution outside the set $W_n$, 
$$
\sum_{s\notin W_n} p_1(s)p_2(h-s) \leq  \left(\max_{s\notin W_n}p_1( s) \right)  \left(\sum_{s\notin W_n} p_2(h-s) \right)\leq o(1) \left(\max_{s\notin W_n}p_1( s) \right).  
$$
Let $N:=n-d$ and $p:=\alpha_n$, so that $X_1\sim\mathrm{Bin}(N,p)$. For $0\le s\le N-1$,
\[
\frac{p_1(s+1)}{p_1(s)}=\frac{N-s}{s+1}\cdot\frac{p}{1-p}.
\]
Let $m=\lfloor (N+1)p\rfloor$ be a mode. Then $p_1(s)$ is nondecreasing for $s\le m$
and nonincreasing for $s\ge m$. Since $|m-\mu_1|\le 1$ and $2x_n\sigma_n\to\infty$,
we have $m\in W_n=\{a_n,\dots,b_n\}$ for all large $n$. Hence
\[
\max_{s\notin W_n}p_1(s)
=\max\{p_1(a_n-1),p_1(b_n+1)\}
\le \min\{p_1(a_n),p_1(b_n)\}
=\min_{s\in W_n}p_1(s).
\]
As a result, 
$$
\frac{\sum_{s\notin W_n} p_1(s)p_2(h-s)}{\sum_{s\in W_n} p_1(s)p_2(h-s)}
\leq \frac{o(1) \left(\max_{s\notin W_n}p_1( s) \right)}{ (1-o(1)) \big(\min_{s\in W_n}p_1(s)\big)} = o(1). 
$$

Consequently,
\begin{equation}
\mathbb P(X=h) \sim \sum_{s\in W_n}p_1(s) p_2(h-s)
=
(1+o(1))\sum_{s\in W_n} f_n(s).
\end{equation}

\smallskip
\noindent\textbf{Discretization on $W_n$: sum versus integral.}
Write the integer window as
\[
W_n=\{a_n,a_n+1,\dots,b_n\},
\qquad
a_n:=\big\lceil \mu_1-2x_n\sigma_n\big\rceil,\quad
b_n:=\big\lfloor \mu_1+2x_n\sigma_n\big\rfloor,
\]
and identify $W_n$ with the real interval $[a_n,b_n]$ when it appears as the
domain of integration.

Partition $[a_n,b_n]$ into unit intervals $[s,s+1]$ for $s=a_n,\dots,b_n-1$.
Then
\[
\int_{W_n} f_n(u)\,du
=
\sum_{s=a_n}^{b_n-1}\int_{s}^{s+1} f_n(u)\,du,
\]
so
\[
\sum_{s\in W_n} f_n(s)-\int_{W_n} f_n(u)\,du
=
\sum_{s=a_n}^{b_n-1}\Big(f_n(s)-\int_s^{s+1}f_n(u)\,du\Big)+f_n(b_n).
\]
For each integer $s$, by the fundamental theorem of calculus,
\[
f_n(s)-f_n(u)= -\int_s^{u} f_n'(t)\,dt,
\qquad u\in[s,s+1],
\]
and hence
\[
\Big|f_n(s)-\int_s^{s+1}f_n(u)\,du\Big|
=
\Big|\int_s^{s+1}\bigl(f_n(s)-f_n(u)\bigr)\,du\Big|
\le
\int_s^{s+1}\int_s^{u}|f_n'(t)|\,dt\,du
\le
\int_s^{s+1}|f_n'(t)|\,dt.
\]
Summing over $s=a_n,\dots,b_n-1$ yields the deterministic bound
\begin{equation}\label{eq:disc_Wn_basic}
\Bigg|\sum_{s\in W_n} f_n(s)-\int_{W_n} f_n(u)\,du\Bigg|
\le
f_n(a_n)+f_n(b_n)+\int_{W_n}|f_n'(u)|\,du.
\end{equation}

Next write
\[
A(u):=\frac{u-\mu_1}{\sigma_1},
\qquad
B(u):=\frac{h-u-\mu_2}{\sigma_2},
\]
so that
\[
f_n(u)=\frac{1}{\sigma_1\sigma_2}\varphi(A(u))\varphi(B(u)),
\qquad
f_n'(u)=f_n(u)\Big(-\frac{A(u)}{\sigma_1}+\frac{B(u)}{\sigma_2}\Big),
\]
and therefore
\[
|f_n'(u)|
\le
\frac{1}{\sigma_1}f_n(u)|A(u)|
+
\frac{1}{\sigma_2}f_n(u)|B(u)|.
\]
Integrating over $W_n$ and extending to $\mathbb R$ gives
\begin{equation}\label{eq:disc_Wn_fnprime}
\int_{W_n}|f_n'(u)|\,du
\le
\frac{1}{\sigma_1}\int_{\mathbb R}f_n(u)|A(u)|\,du
+
\frac{1}{\sigma_2}\int_{\mathbb R}f_n(u)|B(u)|\,du.
\end{equation}
The measure $f_n(u)\,du$ is proportional to a Gaussian density in $u$ with variance
$\tau_n^2=(\sigma_1^{-2}+\sigma_2^{-2})^{-1}=\sigma_1^2\sigma_2^2/\sigma_n^2$ and mean
$m_n=\mu_1+\frac{\sigma_1^2}{\sigma_n^2}(h-\mu_n)$.
Consequently, there exists an absolute constant $C>0$ such that
\[
\int_{\mathbb R}f_n(u)|A(u)|\,du
+
\int_{\mathbb R}f_n(u)|B(u)|\,du
\le
C(1+|x_n|)\int_{\mathbb R} f_n(u)\,du.
\]
In the balanced case $\sigma_1\asymp\sigma_2\asymp\sigma_n$, combining this with
\eqref{eq:disc_Wn_fnprime} yields
\[
\int_{W_n}|f_n'(u)|\,du
\le
C\,\frac{1+|x_n|}{\sigma_n}\int_{\mathbb R} f_n(u)\,du
=
o\!\left(\int_{\mathbb R} f_n(u)\,du\right),
\]
since $(1+|x_n|)/\sigma_n=o(1)$.

Finally, for $u\in\{a_n,b_n\}$ we have $|u-\mu_1|\ge 2x_n\sigma_n-O(1)$, hence
$|A(u)|\ge c x_n$ for some constant $c>0$ and therefore
\[
f_n(a_n)+f_n(b_n)
\le
\frac{C}{\sigma_1\sigma_2}e^{-c x_n^2}
=
o\!\left(\int_{\mathbb R} f_n(u)\,du\right),
\]
because $\int_{\mathbb R} f_n(u)\,du=\sigma_n^{-1}\varphi(x_n)\asymp \sigma_n^{-1}e^{-x_n^2/2}$
and $c>1/2$ can be chosen in the balanced case.
Plugging these bounds into \eqref{eq:disc_Wn_basic} gives
\[
\sum_{s\in W_n} f_n(s)
=
\int_{W_n} f_n(u)\,du
+
o\!\left(\int_{\mathbb R} f_n(u)\,du\right).
\]

Using the similar idea for $\sum{s\in W_n}$, we can show that 
$$
\frac{\int_{W_n^c} f_n(s) ds}{\int_{W_n} f_n(s) ds} = o(1). 
$$
As a result, we have
$$
\mathbb P(X=h)
\sim \sum_{s\in W_n}p_1(s) p_2(h-s)
\sim \sum_{s\in W_n} f_n(s)
\sim \int_{W_n} f_n(u)\,du
\sim \int_{\mathbb R} f_n(u)\,du.
$$

\medskip
\noindent\emph{Unbalanced case.}
Assume without loss of generality that $\sigma_1^2/\sigma_n^2\to0$,
so $\sigma_n\sim\sigma_2$ and $\sigma_2^2\to\infty$.

Using the convolution identity,
\[
\mathbb P(X=h)
=
\mathbb E\big[\mathbb P(X_2=h-X_1)\mid X_1\big].
\]
Write $T:=X_1-\mu_1$ and $m:=h-\mu_1$, so $h-X_1=m-T$ and $m-\mu_2=h-\mu_n$.

Choose $\eta_n\downarrow0$ such that $\eta_n\sigma_2/\sigma_1\to\infty$
(e.g.\ $\eta_n=(\sigma_1/\sigma_2)^{1/2}$), and define
$\mathcal E_n=\{|T|\le\eta_n\sigma_2\}$.
Then $\mathbb P(\mathcal E_n^c)=o(1)$ by Chebyshev’s inequality.

On $\mathcal E_n$,
\[
|(m-T)-\mu_2|
\le |h-\mu_n|+|T|
=
o(\sigma_2^{4/3}),
\]
so Lemma~\ref{lem:lclt_bin} applies uniformly and yields
\[
\mathbb P(X_2=m-T)
=
\frac{1}{\sqrt{2\pi}\sigma_2}
\exp\!\left(-\frac{(m-T-\mu_2)^2}{2\sigma_2^2}\right)(1+o(1)).
\]
Moreover, write $x_2 := (h-\mu_n)/\sigma_2$ and $U:=T/\sigma_2$. On $\mathcal E_n$,
Lemma~\ref{lem:lclt_bin} gives
\[
\mathbb P(X_2=m-T)
=
\frac{1}{\sigma_2}\,\varphi(x_2-U)\,(1+o(1)),
\]
uniformly, where $\varphi(t)=(2\pi)^{-1/2}e^{-t^2/2}$.
Since $|\varphi(\cdot)|\le (2\pi)^{-1/2}$, taking expectations and using
$\mathbb P(\mathcal E_n^c)=o(1)$ yields
\[
\mathbb P(X=h)
=
(1+o(1))\,\frac{1}{\sigma_2}\,\mathbb E\big[\varphi(x_2-U)\big]
+ o\!\left(\frac{1}{\sigma_2}\right).
\]

We next show that $\mathbb E[\varphi(x_2-U)]=\varphi(x_2)\,(1+o(1))$ uniformly for
$|x_2|=o(\sigma_2^{1/3})$. Using Taylor's theorem,
\[
\varphi(x_2-U)=\varphi(x_2)-U\varphi'(x_2)+\frac{U^2}{2}\varphi''(\xi),
\]
for some $\xi$ between $x_2$ and $x_2-U$. Taking expectations and using $\mathbb E[U]=0$,
\[
\mathbb E[\varphi(x_2-U)]-\varphi(x_2)
=
\frac{1}{2}\,\mathbb E\big[U^2\,\varphi''(\xi)\big].
\]
Since $\varphi''(t)=(t^2-1)\varphi(t)$ and $\sup_{|v|\le |U|} \varphi(x_2-v)\le \varphi(x_2-|U|)$,
we have the crude bound $|\varphi''(\xi)|\le C(1+x_2^2+U^2)\,\varphi(x_2-U)$ for an absolute constant $C$.
On $\mathcal E_n$, $|U|\le \eta_n\to0$, so $\varphi(x_2-U) = \varphi(x_2)\,(1+o(1))$ uniformly,
and therefore
\[
\left|\mathbb E[\varphi(x_2-U)]-\varphi(x_2)\right|
\le C\,\mathbb E[U^2]\,(1+x_2^2+o(1))\,\varphi(x_2).
\]
But $\mathbb E[U^2]=\Var(T)/\sigma_2^2=\sigma_1^2/\sigma_2^2=o(1)$, and $x_2^2=o(\sigma_2^{2/3})$,
so the right-hand side is $o(1)\varphi(x_2)$. Hence
\[
\mathbb E[\varphi(x_2-U)] = \varphi(x_2)\,(1+o(1)).
\]
Combining the above displays gives
\[
\mathbb P(X=h)
=
\frac{1}{\sigma_2}\,\varphi\!\left(\frac{h-\mu_n}{\sigma_2}\right)\,(1+o(1))
=
\frac{1}{\sqrt{2\pi}\sigma_2}\exp\!\left(-\frac{(h-\mu_n)^2}{2\sigma_2^2}\right)\,(1+o(1)).
\]
Finally, since $\sigma_n\sim\sigma_2$, the last expression is also
\[
\frac{1}{\sqrt{2\pi}\sigma_n}\exp\!\left(-\frac{(h-\mu_n)^2}{2\sigma_n^2}\right)\,(1+o(1)).
\]

\paragraph{Upper tail.}
Write $X=\sum_{i=1}^{n} Y_i$ as a sum of independent Bernoulli random variables,
with $0\le Y_i\le 1$, $\bbE X=\mu_n$, and $\Var(X)=\sigma_n^2\to\infty$.
For sums of independent uniformly bounded random variables, a moderate deviation
normal approximation holds: uniformly for $x=o(\sigma_n^{1/3})$,
\[
\bbP\!\left(\frac{X-\mu_n}{\sigma_n}\ge x\right)
=
\bigl(1-\Phi(x)\bigr)\,(1+o(1)).
\]
This follows, for example, from  \cite{petrov2012sums}. 
In particular, if $x\to\infty$ while still $x=o(\sigma_n^{1/3})$, then by Mills' ratio,
\[
1-\Phi(x)
\sim
\frac{1}{\sqrt{2\pi}}\frac{1}{x}e^{-x^2/2},
\]
and hence
\[
\mathbb P(X\ge \mu_n+x\sigma_n)
\sim
\frac{1}{\sqrt{2\pi}}\frac{1}{x}e^{-x^2/2}.
\]
\end{proof}

\begin{lem}[Binomial local CLT on a moderate window]\label{lem:lclt_bin}
Let $Y\sim\mathrm{Bin}(m,p)$ with $p=p_m\in(0,1)$ possibly depending on $m$.
Let $\mu=mp$ and $\sigma^2=mp(1-p)$, and assume $\sigma^2\to\infty$ as $m\to\infty$.
Then uniformly over integers $k$ satisfying
\[
|k-\mu|=o(\sigma^{4/3}),
\]
we have
\[
\mathbb P(Y=k)
=
\frac{1}{\sqrt{2\pi}\,\sigma}\exp\!\left(-\frac{(k-\mu)^2}{2\sigma^2}\right)\,(1+o(1)).
\]
\end{lem}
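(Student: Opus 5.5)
The plan is to prove Lemma~\ref{lem:lclt_bin} directly from Stirling's formula applied to the binomial pmf, writing $k=\mu+x\sigma$ with $x=o(\sigma^{1/3})$, i.e.\ $|k-\mu|=o(\sigma^{4/3})$. Set $q=1-p$. Since $\sigma^2=mpq\to\infty$, both $mp\to\infty$ and $mq\to\infty$. Moreover $|k-\mu|=o(\sigma^{4/3})=o(\sigma^2)$, and $\sigma^2\le \min(mp,mq)$, so both $k$ and $m-k$ tend to infinity with $k/(mp)\to 1$ and $(m-k)/(mq)\to1$ uniformly. Hence Stirling's formula $j!=\sqrt{2\pi j}\,(j/e)^j(1+O(1/j))$ applies to $m!$, $k!$ and $(m-k)!$ with uniformly vanishing relative errors.

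Substituting gives
\[
\bbP(Y=k)=\sqrt{\frac{m}{2\pi k(m-k)}}\exp\!\Big(-k\log\frac{k}{mp}-(m-k)\log\frac{m-k}{mq}\Big)(1+o(1)).
\]
The prefactor equals $(2\pi\sigma^2)^{-1/2}(1+o(1))$, because $k(m-k)/m=mpq\,(1+o(1))$. For the exponent, write $k=mp(1+u)$ and $m-k=mq(1-v)$ with $u=x\sigma/(mp)$ and $v=x\sigma/(mq)$; note $mp\,u=mq\,v=x\sigma$. Expanding $(1+u)\log(1+u)=u+u^2/2-u^3/6+O(u^4)$, and similarly in $v$, the linear terms cancel. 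The quadratic terms give $\tfrac12 x^2\sigma^2\big(\tfrac1{mp}+\tfrac1{mq}\big)=x^2/2$.

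The main obstacle is the cubic remainder, which has size $O\big(x^3\sigma^3(\tfrac1{(mp)^2}+\tfrac1{(mq)^2})\big)$. One might hope $mp\asymp\sigma^2$, but when $p$ is not near $0$ (say $q\to0$) the relevant bound is instead $\min(mp,mq)\ge\sigma^2$. Using $\sigma^3/(mp)^2\le \sigma^3/\sigma^4=1/\sigma$ (and likewise with $mq$), the cubic term is $O(x^3/\sigma)=o(1)$ exactly because $x=o(\sigma^{1/3})$. The quartic and higher terms are smaller still, since $u,v=O(x/\sigma)\to0$; here I use $|u|\le x\sigma/\sigma^2$.

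Thus the exponent equals $-x^2/2+o(1)$ uniformly in the window. This yields $\bbP(Y=k)=(2\pi)^{-1/2}\sigma^{-1}e^{-x^2/2}(1+o(1))$, which is the claimed formula. All error terms depend only on $\sup|x|/\sigma^{1/3}$ and $1/\sigma$, which gives the asserted uniformity over integers $k$.
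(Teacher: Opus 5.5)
Your proposal is correct and follows essentially the same route as the paper: Stirling's formula with uniform relative error, an exponent that is exactly $-m\,D(k/m\,\|\,p)$ expanded to second order with cubic remainder of size $O(|k-\mu|^3/\sigma^4)=o(1)$, and a prefactor satisfying $k(m-k)/m\sim\sigma^2$. The only cosmetic difference is that you Taylor-expand $(1+u)\log(1+u)$ directly, whereas the paper packages the same expansion as a Bernoulli KL-divergence bound (Corollary~\ref{cor:KL_quadratic}).
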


\begin{proof}
Fix an integer $k$ and write $q:=k/m\in[0,1]$. The binomial pmf can be written as
\[
\mathbb P(Y=k)=\binom{m}{k}p^k(1-p)^{m-k}.
\]
We first show that on the stated window, $k\to\infty$ and $m-k\to\infty$ uniformly.
Indeed, $\sigma^2=mp(1-p)\to\infty$ implies both $mp\to\infty$ and $m(1-p)\to\infty$
(since $p(1-p)\le \min\{p,1-p\}$). Moreover,
\[
\frac{\sigma^{4/3}}{\mu}=\frac{(mp(1-p))^{2/3}}{mp}=\frac{(1-p)^{2/3}}{(mp)^{1/3}}\to0,
\qquad
\frac{\sigma^{4/3}}{m-\mu}=\frac{(mp(1-p))^{2/3}}{m(1-p)}=\frac{p^{2/3}}{(m(1-p))^{1/3}}\to0,
\]
so $|k-\mu|=o(\sigma^{4/3})$ implies $k\sim \mu\to\infty$ and $m-k\sim m-\mu\to\infty$
uniformly over the window. In particular, $q\in(0,1)$ and $q\to p$ uniformly.

\smallskip
\noindent\textbf{Step 1: Stirling with uniform relative error.}
By Stirling's formula with remainder,
\[
n! = \sqrt{2\pi n}\left(\frac{n}{e}\right)^n\left(1+O\left(\frac{1}{n}\right)\right),
\]
uniformly for $n\to\infty$. Applying this to $m!,k!, (m-k)!$ and using that
$k\wedge(m-k)\to\infty$ uniformly on the window gives
\begin{align*}
\binom{m}{k}
&=
\frac{m!}{k!(m-k)!}
=
\frac{1}{\sqrt{2\pi m q(1-q)}}
\cdot
\frac{m^m}{k^k(m-k)^{m-k}}
\cdot
\big(1+o(1)\big),
\end{align*}
uniformly over $|k-\mu|=o(\sigma^{4/3})$. Therefore,
\begin{align*}
\mathbb P(Y=k)
&=
\frac{1}{\sqrt{2\pi m q(1-q)}}
\left(\frac{m^m}{k^k(m-k)^{m-k}}\right)
p^k(1-p)^{m-k}\,
(1+o(1)).
\end{align*}
Taking logs of the non-prefactor part and using $k=mq$ yields the standard identity
\[
\log\left[
\left(\frac{m^m}{k^k(m-k)^{m-k}}\right)p^k(1-p)^{m-k}
\right]
=
-\,m\Big[q\log\!\frac{q}{p}+(1-q)\log\!\frac{1-q}{1-p}\Big]
= -m\,D(q\|p),
\]
where $D(q\|p)=\mathrm{KL}(\mathrm{Bern}(q)\,\|\,\mathrm{Bern}(p))$.
Hence
\begin{equation}\label{eq:bin_pmf_KL_form}
\mathbb P(Y=k)
=
\frac{1}{\sqrt{2\pi m q(1-q)}}\,
\exp\!\big(-mD(q\|p)\big)\,
(1+o(1)),
\end{equation}
uniformly on the window.

\smallskip
\noindent\textbf{Step 2: Expand the KL term in the moderate window.}
Let $\Delta:=k-\mu$ so that $q-p=\Delta/m$. Note that
\[
\frac{|q-p|}{p(1-p)}
=\frac{|\Delta|}{mp(1-p)}
=\frac{|\Delta|}{\sigma^2}
=o(\sigma^{4/3}/\sigma^2)
=o(\sigma^{-2/3})\to0,
\]
uniformly over $|k-\mu|=o(\sigma^{4/3})$. Hence for any fixed $c\in(0,1)$,
for all sufficiently large $m$ we have uniformly on the window
$|q-p|\le c\,p(1-p)$.
Applying Corollary~\ref{cor:KL_quadratic} with the substitution $(p,q)\mapsto(q,p)$,
we obtain
\[
D(q\|p)
=
\frac{(q-p)^2}{2\,p(1-p)}
+O\!\left(\frac{|q-p|^3}{p^2(1-p)^2}\right),
\]
uniformly on the window, where the $O(\cdot)$ constant depends only on $c$.
Multiplying by $m$ gives
\[
mD(q\|p)
=
\frac{\Delta^2}{2\sigma^2}
+O\!\left(\frac{|\Delta|^3}{\sigma^4}\right).
\]
Since $|\Delta|=o(\sigma^{4/3})$, we have $|\Delta|^3/\sigma^4=o(1)$, and therefore
\begin{equation}\label{eq:KL_expansion_window}
mD(q\|p)=\frac{(k-\mu)^2}{2\sigma^2}+o(1),
\end{equation}
uniformly on the window.

\smallskip
\noindent\textbf{Step 3: Simplify the prefactor.}
We claim that $mq(1-q)\sim mp(1-p)=\sigma^2$ uniformly on the window.
Indeed,
\[
|q(1-q)-p(1-p)|
= |(q-p)(1-q-p)|
\le 2|q-p|,
\]
for $q$ close to $p$. Also,
\[
\frac{|q-p|}{p(1-p)}
=
\frac{|\Delta|/m}{p(1-p)}
=
\frac{|\Delta|}{\sigma^2}
=
o(\sigma^{4/3}/\sigma^2)
=
o(\sigma^{-2/3})
\to 0,
\]
so $|q(1-q)-p(1-p)|=o(p(1-p))$ and hence $q(1-q)=p(1-p)(1+o(1))$.
Therefore
\begin{equation}\label{eq:prefactor}
\frac{1}{\sqrt{2\pi m q(1-q)}}
=
\frac{1}{\sqrt{2\pi}\,\sigma}\,(1+o(1)),
\end{equation}
uniformly on the window.

\smallskip
\noindent\textbf{Step 4: Combine.}
Plugging \eqref{eq:KL_expansion_window} and \eqref{eq:prefactor} into
\eqref{eq:bin_pmf_KL_form} gives
\[
\mathbb P(Y=k)
=
\frac{1}{\sqrt{2\pi}\,\sigma}\,
\exp\!\left(-\frac{(k-\mu)^2}{2\sigma^2}\right)\,
(1+o(1)),
\]
uniformly over integers $k$ satisfying $|k-\mu|=o(\sigma^{4/3})$.
\end{proof}

The following lemma provides the foundation for the proof of Lemma ~\ref{lem:binom_approx}. 
\begin{lem}\label{lem:KL_Taylor}
Let $q\in(0,1)$ and define $D(p\|q)=\mathrm{KL}(\mathrm{Bern}(p)\,\|\,\mathrm{Bern}(q))$.
Fix $c\in(0,1)$. Uniformly for $p\in(0,1)$ satisfying $|p-q|\le c\,q(1-q)$,
\[
D(p\|q)
=
\frac{(p-q)^2}{2\,q(1-q)}
+ \frac{2q-1}{6\,q^2(1-q)^2}(p-q)^3
+ O\!\left(\frac{|p-q|^4}{q^3(1-q)^3}\right),
\]
where the $O(\cdot)$ constant depends only on $c$.
\end{lem}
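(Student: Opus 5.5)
The plan is to prove Lemma~\ref{lem:KL_Taylor} by a direct Taylor expansion of $D(p\|q)$ in the variable $p$ around $q$, with an explicit integral-form remainder. Write
\[
D(p\|q)=p\log\frac{p}{q}+(1-p)\log\frac{1-p}{1-q}=:g(p).
\]

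\textbf{Derivatives at $q$.} Differentiating in $p$ gives
\[
g(q)=0,\qquad g'(p)=\log\frac{p}{q}-\log\frac{1-p}{1-q},\qquad g'(q)=0.
\]
Continuing,
\[
g''(p)=\frac1p+\frac1{1-p},\qquad g'''(p)=-\frac1{p^2}+\frac1{(1-p)^2},\qquad g''''(p)=\frac{2}{p^3}+\frac{2}{(1-p)^3}.
\]
At $p=q$ these give $g''(q)=1/(q(1-q))$ and
\[
g'''(q)=\frac{q^2-(1-q)^2}{q^2(1-q)^2}=\frac{2q-1}{q^2(1-q)^2}.
\]
Taylor's theorem with Lagrange remainder then yields
\[
D(p\|q)=\frac{(p-q)^2}{2q(1-q)}+\frac{(2q-1)(p-q)^3}{6q^2(1-q)^2}+\frac{g''''(\xi)}{24}(p-q)^4
\]
for some $\xi$ between $p$ and $q$. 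This already produces exactly the two displayed leading terms.

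\textbf{Bounding the remainder.} It remains to show $g''''(\xi)\le C_c/(q^3(1-q)^3)$ uniformly. This is the only real step, since the endpoints $q\to0$ or $q\to1$ must be handled uniformly. The hypothesis $|p-q|\le c\,q(1-q)$ forces $|p-q|\le cq$ and $|p-q|\le c(1-q)$, because $1-q\le1$ and $q\le1$. Hence every $\xi$ between $p$ and $q$ satisfies
\[
\xi\ge(1-c)q,\qquad 1-\xi\ge(1-c)(1-q).
\]
Therefore
\[
g''''(\xi)\le\frac{2}{(1-c)^3}\Big(\frac1{q^3}+\frac1{(1-q)^3}\Big)\le\frac{4}{(1-c)^3\,q^3(1-q)^3},
\]
where the last inequality uses $q^{-3}\le q^{-3}(1-q)^{-3}$ and the analogous bound for $(1-q)^{-3}$. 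The constant $4/(24(1-c)^3)$ depends only on $c$, which gives the claimed $O\big(|p-q|^4/(q^3(1-q)^3)\big)$ uniformly. The same endpoint argument also ensures $p\in(0,1)$ throughout the segment, so all logarithms are well defined.

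Corollary~\ref{cor:KL_quadratic}, as used in Lemma~\ref{lem:lclt_bin}, follows by stopping the expansion one order earlier. One either bounds $g'''(\xi)$ by $2/((1-c)^2q^2(1-q)^2)$ in the same way, or absorbs the quartic term into the cubic one using $|p-q|/(q(1-q))\le c$.
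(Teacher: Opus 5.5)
Your proposal is correct and takes the same route as the paper, which only states that the result follows from a straightforward Taylor expansion of $D(p\|q)$ in $p$ about $q$. Your Lagrange-remainder computation, together with the bounds $\xi\ge(1-c)q$ and $1-\xi\ge(1-c)(1-q)$ on the fourth derivative, supplies the uniformity in $q$ that the paper leaves implicit.
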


The expansion follows from a straightforward Taylor expansion of $D(p\|q)$ with respect to $p$ around $p=q$.
\begin{rmk}
Lemma~\ref{lem:KL_Taylor} remains valid in the sparse regime $p,q=o(1)$.
However, when $q\to0$, the quadratic term provides a leading approximation
only in a relative neighborhood of $q$, i.e.\ when $|p-q|/q\to0$.
This regime is made precise in Corollary~\ref{cor:KL_quadratic}.
\end{rmk}

\begin{coro}\label{cor:KL_quadratic}
Fix $c\in(0,1)$. Uniformly for $q\in(0,1)$ and $p\in(0,1)$ such that
$|p-q|\le c\,q(1-q)$,
\[
D(p\|q)
=
\frac{(p-q)^2}{2\,q(1-q)}
\left(1+O\!\left(\frac{|p-q|}{q(1-q)}\right)\right),
\]
equivalently,
\[
D(p\|q)
=
\frac{(p-q)^2}{2\,q(1-q)}
+O\!\left(\frac{|p-q|^3}{q^2(1-q)^2}\right),
\]
where the $O(\cdot)$ constant depends only on $c$.
In particular, if $\frac{|p-q|}{q(1-q)}\to0$, then
\[
D(p\|q)=\frac{(p-q)^2}{2\,q(1-q)}\,(1+o(1)).
\]
\end{coro}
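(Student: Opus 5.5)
The plan is to derive Corollary~\ref{cor:KL_quadratic} directly from the expansion in Lemma~\ref{lem:KL_Taylor} by factoring out the quadratic leading term. As a safeguard, I also give a self-contained third-order Taylor argument in case one does not want to rely on the unproved uniformity in that lemma.

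\emph{Route via Lemma~\ref{lem:KL_Taylor}.} Write $r:=|p-q|/(q(1-q))$, so the hypothesis reads $r\le c$.

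1. Divide the cubic term of Lemma~\ref{lem:KL_Taylor} by the quadratic term $(p-q)^2/(2q(1-q))$. This gives $\tfrac{(2q-1)(p-q)}{3q(1-q)}$, which is at most $r/3$ in absolute value because $|2q-1|\le 1$.
2. Divide the quartic remainder by the same quadratic term. This gives $O(r^2)$, and since $r\le c<1$ we have $r^2\le r$, so it is $O(r)$ with a constant depending only on $c$.
3. Combining steps 1 and 2 yields $D(p\|q)=\tfrac{(p-q)^2}{2q(1-q)}(1+O(r))$, which is the first form.
4. Multiplying out gives the equivalent additive form: the error is $\tfrac{(p-q)^2}{2q(1-q)}\cdot O\bigl(\tfrac{|p-q|}{q(1-q)}\bigr)=O\bigl(|p-q|^3/(q^2(1-q)^2)\bigr)$.
5. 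The final statement follows by letting $r\to0$ in the multiplicative form.

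\emph{Self-contained route.} Expand $f(t):=D(t\|q)$ about $t=q$. We have $f(q)=f'(q)=0$ and $f''(t)=1/(t(1-t))$, so $f''(q)=1/(q(1-q))$. By Lagrange's form,
\[
D(p\|q)=\frac{(p-q)^2}{2q(1-q)}+\frac{f'''(\xi)}{6}(p-q)^3,
\qquad f'''(t)=\frac{1}{(1-t)^2}-\frac{1}{t^2},
\]
with $\xi$ between $p$ and $q$. On this segment $|\xi-q|\le c\,q(1-q)$, which gives
\[
\xi\ge q-cq(1-q)\ge(1-c)q,\qquad 1-\xi\ge(1-c)(1-q).
\]
Hence
\[
|f'''(\xi)|\le \frac{1}{(1-c)^2}\max\Bigl\{\frac{1}{q^2},\frac{1}{(1-q)^2}\Bigr\}\le \frac{1}{(1-c)^2\,q^2(1-q)^2}.
\]
This gives the additive form with constant $1/(6(1-c)^2)$, and dividing by the quadratic term recovers the multiplicative form.

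The main obstacle is uniformity in $q$, including the sparse regime $q\to0$ or $q\to1$. In the second route this reduces to the lower bounds $\xi\ge(1-c)q$ and $1-\xi\ge(1-c)(1-q)$, which hold uniformly in $q$ because the hypothesis measures $|p-q|$ relative to $q(1-q)$. For this reason I would present the self-contained argument as the main proof.
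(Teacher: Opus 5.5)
Your proposal is correct. Your first route is how the paper obtains the corollary. The paper states it directly after Lemma~\ref{lem:KL_Taylor} with no separate proof, so the intended argument is to divide the cubic and quartic terms of that expansion by the quadratic term, as in your steps 1--4. The cubic ratio is at most $r/3$, and the quartic ratio is $O(r^2)$, hence $O(r)$ because $r\le c<1$.

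Your second route is genuinely different. It stops at third order with a Lagrange remainder and bounds $f'''(t)=(1-t)^{-2}-t^{-2}$ directly on the segment between $p$ and $q$, using $\xi\ge(1-c)q$ and $1-\xi\ge(1-c)(1-q)$. This gives two things the paper's route does not:
\begin{itemize}
\item an explicit constant, $1/(6(1-c)^2)$;
\item an actual proof of uniformity in $q$, including $q\to0$ or $q\to1$. The paper asserts uniformity in Lemma~\ref{lem:KL_Taylor} but calls it a ``straightforward Taylor expansion''. Justifying it there needs exactly your segment bounds, now to control $f''''(\xi)=2\xi^{-3}+2(1-\xi)^{-3}$ by a multiple of $q^{-3}(1-q)^{-3}$.
\end{itemize}
In exchange, the paper's route records the cubic coefficient $(2q-1)/(6q^2(1-q)^2)$, which is finer than the corollary needs. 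Presenting the self-contained argument as your main proof is the better choice.
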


\end{document}